\newcommand{\binr}[2]{\ar@<.5ex>[r]^{#1} \ar@<-.5ex>[r]_{#2}}
\newcommand{\bind}[2]{\ar@<.5ex>[d]^{#1} \ar@<-.5ex>[d]_{#2}}
\def\ra{\rightarrow}
\def\id{{\rm id}}
\def\ZZ{{\mathbb{Z}}}
\def\NN{{\mathbb{N}}}
\def\VV{{\mathbb{V}}}
\def\cA{{\mathcal A}}
\def\cC{{\mathcal C}}
\def\cE{{\mathcal E}}
\def\cM{{\mathcal M}}
\def\cO{{\mathcal O}}
\def\cP{{\mathcal P}}
\newcommand{\Sub}{\mathrm{Sub}}
\newcommand{\Homm}{\mathrm{Hom}}
\newcommand{\sch}{\textnormal{Sch}}
\newcommand{\schs}{\sch_S}
\newcommand{\sms}{\mathrm{Sm}_S}
\newcommand{\Ho}{\textnormal{Ho}}
\newcommand{\pre}{\textnormal{\textbf{Pre}}}
\newcommand{\spre}{\textnormal{s\textbf{Pre}}}
\newtheorem{thm}{Theorem}[section]
\newtheorem*{thm*}{Theorem}
\newtheorem{cor}[thm]{Corollary}
\newtheorem{prop}[thm]{Proposition}
\newtheorem{lem}[thm]{Lemma}
\theoremstyle{definition}
\newtheorem{defn}[thm]{Definition}
\newtheorem*{defn*}{Definition}
\newtheorem{rem}[thm]{Remark}
\theoremstyle{remark}
\let\c@equation\c@thm
\numberwithin{equation}{section}
\title[]
{}
\title[Comparison of Exterior Power Operations]
{Comparison of Exterior Power Operations on Higher \texorpdfstring{$K$}{K}-Theory of Schemes}
\author{Bernhard K\"{o}ck}
\address{School of Mathematical Sciences\\ University of Southampton\\  Southampton SO17 1BJ\\ United Kingdom}
\email{B.Koeck@soton.ac.uk}
\author{Ferdinando Zanchetta}
\address{FABIT\\ University of Bologna\\ Via San Donato 15\\ Bologna 40127\\ Italy}
\email{ferdinando.zanchett2@unibo.it}
\date{\today}
\begin{document}

\begin{abstract} Exterior power operations provide an additional structure on $K$-groups of schemes which lies at the heart of Grothendieck's Rie\-mann-Roch theory. Over the past decades, various authors have constructed such operations on higher $K$-theory. In this paper, we prove that these constructions actually yield the same operations, ultimately matching up the explicit combinatorial  description by Harris, the first author and Taelman on the one hand and the recent, conceptually clear-cut construction by Barwick, Glasman, Mathew and Nikolaus on the other hand. This also leads to the proof of a conjecture by the first author about composition of these operations in the equivariant context, completing the proof that higher equivariant $K$-groups satisfy all axioms of a $\lambda$-ring. 
\end{abstract}

\maketitle

\section*{Introduction}

\subsection*{Background} Tensor products and exterior powers furnish the exact category of finitely generated projective modules over a commutative ring or, more generally, the category of locally free modules of finite rank on a scheme with additional structure which has been studied in great depth in countless articles and books. For example, the main purpose of the book \cite{FL} by Fulton and Lang  is to explain Grothendieck's tremendous  and rather formal ``machine'' which turns this structure into Riemann-Roch type theorems and which has seen the light in the seminal \cite{SGA6}. In particular, exterior power operations on $K$-groups play a fundamental role in Algebraic Geometry and beyond.

While the construction of exterior power operations on Grothendieck groups is fairly elementary, the construction of these operations on higher $K$-groups is more delicate. Over the past decades and especially more recently, many authors have provided such constructions, of varying flavour and generality.

\subsection*{Main results} The core object of this paper is to prove for most of these constructions that they actually yield the same operations. This will culminate in the proof of a conjecture \cite[Conjecture~(2.7)]{GRR} by the first author about composition of exterior power operations in the equivariant context;  this conjecture was the last missing ingredient needed to prove that Grayson’s operations define a $\lambda$-ring structure on the higher equivariant $K$-groups (see Section 5).

The construction given by Harris, the first author and Taelman in \cite{HKT17} is different from all other constructions as it defines exterior power operations on higher $K$-groups purely combinatorially and does not resort to any homotopy theoretical methods. This construction has been made possible by Grayson's groundbreaking description of higher $K$-groups in terms of generators and relations, see \cite[Corollary~7.4]{Grayson2012}. The bulk of this paper deals with showing that these operations agree with the operations defined by Grayson in \cite{GraysonExterior}: this is accomplished in \cref{Thm: main thm}. In \cref{thm: comparison with Zanchetta and Riou}, we will show furthermore that Grayson's operations agree with the operations recently defined by the second author in \cite{Zan21}, which in turn generalise the operations defined by Riou in \cite{RRR} from non-singular schemes to fairly arbitrary schemes. This will finally allow us to show in \cref{cor: comparing with Gillet-Soule} that Soul\'e's construction in \cite{Sou85} and the more general construction by Gillet and Soul\'e in \cite{GS} yield the same operations. In the same corollary we will furthermore show that the very recent construction by Barwick, Glasman, Mathew and Nikolaus in \cite{Barwick} for polynomial functors on stable $\infty$-categories also yields the same operations when applied to exterior power functors on the category of perfect complexes.

\subsection*{Related results} We now mention some further papers about operations on higher $K$-theory. First, in \cite{JP}, Joshua and Pelaez use techniques from various aforementioned papers to construct exterior operations on the $K$-theory of (smooth) algebraic stacks. In \cite{HK}, Harris and the first author have provided some early evidence that the operations defined in \cite{HKT17} agree with those defined by Hiller in \cite{Hiller} for affine schemes. Grayson has already shown in \cite{GraysonExterior} that his operations agree with Hiller's and those defined by Kratzer in \cite{Kratzer}. The aforementioned construction by Soul\'e and Levine's construction in \cite{Levine} generalise Hiller's and Kratzer's. Nenashev has shown in \cite{NenComparison} that the operations he defined in \cite{NenLambda} agree with Grayson's operations as well.

\subsection*{Overview} In order to prove \cref{Thm: main thm}, we first extend the notion of exterior powers of modules to exterior powers of various types of complexes that appear in the proof of Grayson's explicit description of higher $K$-groups. We do this in \cref{Sec: Operations} in the axiomatic and more general context of abstract power operations introduced in \cite{GraysonExterior}. To define the $k^\mathrm{th}$ exterior power of a complex we use the Dold-Puppe construction from \cite{DP}, i.e., we apply the $k^\mathrm{th}$ exterior power functor to the simplicial object corresponding to the complex via the Dold-Kan correspondence. A crucial detail here is that the $k^\mathrm{th}$ exterior power of a quasi-isomorphism between complexes of locally free modules is again a quasi-isomorphism, see \cref{prop: exterior powers of quasi-isom}.

In \cref{sec: operations on spaces}, we then turn the operations defined in \cref{Sec: Operations} into operations on the corresponding $K$-theory spaces and $K$-groups. To this end, we generalise Grayson's construction in \cite{GraysonExterior} using tools for categories with weak equivalences provided by Waldhausen in \cite{Wald}, by Gunnarson, Schw\"anzl, Vogt and Waldhausen in \cite{GSVW} and by Gunnarson and Schw\"anzl in \cite{GunnSchw}.

\cref{sec: comparing lambda operations} deals with proving our main theorem, \cref{Thm: main thm}. Recall that Grayson's explicit description of higher $K$-groups in \cite[Corollary~7.4]{Grayson2012} is
shown via a composition of a number of insightful homotopy equivalences concerning the $K$-theory spaces
of various exact categories of different types of complexes. The main idea then
is just to prove that each of these homotopy equivalences is compatible with exterior power operations. However, Grayson's homotopy equivalences involve spectra and homotopy cofibres on which exterior powers seem not to exist. The more sophisticated part in this approach is then to first show that natural exterior power operations on the various $K$-theory spaces (not spectra) appearing in many of these homotopy equivalences do exist; this has been the purpose of Sections~1 and~2. Finally, we replace those arguments that involve homotopy cofibres with arguments in the more elementary context of homotopy groups (of homotopy fibres of maps between spaces).

Thus, our proof of \cref{Thm: main thm} shows that the language of spectra is actually not needed for the proof of \cite[Corollary~7.4]{Grayson2012}. In particular, as a by-product, we render Grayson's proof somewhat more elementary, making it accessible to readers not familiar with the language of spectra. However, we do not simplify the underlying reasoning, and obtaining the relevant statements in \cite{Grayson2012} (such as the epitomising Corollary~6.5 there) not just for $K$-groups but for the underlying $K$-theory spectra is of independent interest.

In \cref{sec: comparison with other operations}, we match up Grayson's exterior power operations on schemes with further operations defined previously. The crucial new tool here is the main theorem of \cite{Zan21} which in a sophisticated sense says that we have a bijection between the set of sufficiently functorial operations on higher $K$-theory of fairly general schemes and the set of operations on the Grothendieck group functor~$K_0$. This generalisation of a theorem of Riou \cite{RRR} yields another construction of exterior power operations on higher $K$-theory. At the same time it allows us to verify that these operations agree on the one hand with the operations defined by Grayson, see \cref{thm: comparison with Zanchetta and Riou}, and on the other hand with the operations defined by Soul\'e in \cite{Sou85}, with the more general operations defined by Gillet and Soul\'e in \cite{GS} and with the operations defined by Barwick, Glasman, Mathew and Nikolaus in \cite{Barwick}, see \cref{cor: comparing with Gillet-Soule}, respectively.

In \cref{sec: Equivariant K-theory}, we turn to higher \emph{equivariant} $K$-theory, more precisely, to the $K$-theory of locally free $G$-modules on a scheme equipped with the action of a flat group scheme $G$. In \cite{GRR}, the first author used exterior powers of $G$-modules and ``Grothendieck's machine'' to formulate and prove equivariant Riemann-Roch type theorems variants of which have later been used for stacks (e.g., see \cite{KS20} and the literature cited there). In particular, the statements in \emph{loc.\ cit.}\ depend on equivariant $K$-theory satisfying some or all axioms of a $\lambda$-ring. These axioms have been proved in \emph{loc.\ cit.}\ for $K_0$ and some of them also for higher $K$-groups, but the axiom for composition of exterior power operations is a conjecture there. \cref{thm: conjecture} now proves this conjecture. More precisely, our main theorem (\cref{Thm: main thm}) reduces it to proving it for the combinatorially defined operations in \cite{HKT17}, when it becomes verifiable by the methods developed in \cite[Section~8]{HKT17}.

The paper finishes with two little appendices which prove some technical facts that are used in the proof of \cref{Thm: main thm}. \cref{Sec: non-negative complexes} verifies that using non-negatively supported complexes rather than arbitrary bounded complexes as in \cite{GraysonExterior} doesn't result in any difference for $K$-theory. \cref{Sec: Homotopy} gives an elementary criterion for a map induced on homotopy fibres to agree with an intrinsically defined map.

\medskip

\subsection*{Acknowledgements} The authors wish to thank Clark Barwick for early and insightful discussions related to the topic of this paper. Furthermore, they thank Akhil Mathew for explaining various aspects of \cite{Barwick} and they thank Tom Harris, Jens Hornbostel and Marco Schlichting for their continued and stimulating interest. Finally, they thank the referees for valuable suggestions improving the readability of the paper.

\section{Operations on Categories of Complexes}\label{Sec: Operations}

The object of this section is to introduce (exterior) power operations on various categories of complexes and binary complexes that appear in the proof of Grayson's description of higher $K$-groups, see \cite{Grayson2012}. We'll do this in the context of the axiomatic setup of (exterior) power operations introduced in  \cite[Section~7]{GraysonExterior}. The following definition reproduces this axiomatic setup.

\begin{defn}\label{def: power operations}
An {\em assembly of power operations} consists of the following data subject to the conditions detailed below. \\
Let $\cM_n$, $n \ge 0$, be a sequence of exact categories. We assume we are given bi-exact functors
\[\otimes \colon \cM_n \times \cM_p \rightarrow \cM_{n+p}, \quad \textrm{ for } n, p \ge 0,\]
called {\em tensor products}, which are associative in the obvious sense. For $k \ge 1$ and $n \ge 0$, let $F_k(\cM_n)$ denote the category of sequences $V_1 \rightarrowtail \ldots \rightarrowtail V_k$ of admissible monomorphisms in $\cM_n$. We assume we are given functors
\[ F_k(\cM_n) \rightarrow \cM_{nk}, \quad (V_1 \rightarrowtail \ldots \rightarrowtail V_k) \mapsto V_1 \wedge \ldots \wedge V_k, \quad \textrm{ for } k \ge 1, n\ge 0,\]
called {\em power operations}. (In particular, these functors are assumed to be the identity if $k=1$.) We finally assume: \\
(E1, E2) Given $V \rightarrowtail \ldots \rightarrowtail W \rightarrowtail X \rightarrowtail \ldots \rightarrowtail Y$, there are natural maps
\[V \wedge \ldots \wedge W \otimes X \wedge \ldots \wedge Y \rightarrow V \wedge \ldots \wedge W \wedge X \wedge \ldots \wedge Y\]
and

\[V \wedge \ldots \wedge W \wedge X \wedge \ldots \wedge Y \rightarrow V \wedge \ldots \wedge W \otimes \frac{X}{W} \wedge \ldots \wedge \frac{Y}{W}\]
(for any choice of quotient objects $\frac{X}{W}, \ldots, \frac{Y}{W}$). These maps are assumed to be associative in the obvious sense.\\
(E3, E4) Given $U \rightarrowtail \ldots \rightarrowtail V \rightarrowtail W \rightarrowtail \ldots \rightarrowtail X \rightarrowtail Y \rightarrowtail \ldots \rightarrowtail Z$, the following two diagrams commute:
\[\resizebox{\hsize}{!}{ $ \xymatrix{
U \wedge \ldots \wedge V \wedge W \wedge \ldots \wedge X \otimes Y \wedge \ldots \wedge Z \ar[r] \ar[d] &
U \wedge \ldots \wedge V \wedge W \wedge \ldots \wedge X \wedge Y \wedge \ldots \wedge Z \ar[d]\\
U \wedge \ldots \wedge V \otimes \frac{W}{V} \wedge \ldots \wedge \frac{X}{V} \otimes \frac{Y}{V} \wedge \ldots \wedge \frac{Z}{V} \ar[r] &
U \wedge \ldots \wedge V \otimes \frac{W}{V} \wedge \ldots \wedge \frac{X}{V} \wedge \frac{Y}{V} \wedge \ldots \wedge \frac{Z}{V}
}$}\]
and
\[ \resizebox{\hsize}{!}{$\xymatrix{
U \wedge \ldots \wedge V \otimes W \wedge \ldots \wedge X \wedge Y \wedge \ldots \wedge Z \ar[r] \ar[d] &
U \wedge \ldots \wedge V \wedge W \wedge \ldots \wedge X \wedge Y \wedge \ldots \wedge Z \ar[d]\\
U \wedge \ldots \wedge V \otimes {W} \wedge \ldots \wedge {X} \otimes \frac{Y}{X} \wedge \ldots \wedge \frac{Z}{X} \ar[r] &
U \wedge \ldots \wedge V \wedge {W} \wedge \ldots \wedge {X} \otimes \frac{Y}{X} \wedge \ldots \wedge \frac{Z}{X}
}$}\]
(E5) Given $U \rightarrowtail \ldots \rightarrowtail V \rightarrowtail W' \rightarrowtail W \rightarrowtail X \rightarrowtail \ldots \rightarrowtail Y$, the following sequence is exact:
{\footnotesize\[ \begin{aligned}
0 \rightarrow U \wedge \ldots \wedge V \wedge {W'} \wedge {X} \wedge  \ldots \wedge Y &
\rightarrow U \wedge \ldots \wedge V \wedge {W} \wedge {X} \wedge  \ldots \wedge Y\\
&\rightarrow  U \wedge \ldots \wedge V \otimes \frac{W}{W'} \wedge \frac{X}{W'} \wedge  \ldots \wedge \frac{Y}{W'} \rightarrow 0
\end{aligned}\]}
\end{defn}

We recall, for any assembly of power operations as above, elementary standard constructions yield products and power operations on Grothendieck groups,
\[K_0(\cM_n) \times K_0(\cM_p) \ra K_0(\cM_{n+p}) \quad \textrm{and} \quad \lambda^k \colon K_0(\cM_n) \ra K_0(\cM_{nk})\]
(for $n,p \ge 0$ and $k \ge 1$), and the object of \cite{GraysonExterior} is to extend these operations to higher $K$-groups.

Our main example of an assembly of power operations is of course the case when every $\cM_n$ is equal to the exact category $\cP(X)$ of locally free $\cO_X$-modules of finite rank on a quasi-compact scheme $X$, the functor $\otimes$ is the usual tensor product and $V_1 \wedge \ldots \wedge V_k$ is defined to be the image of the tensor product $V_1 \otimes \ldots \otimes V_k$ in the $k^\mathrm{th}$ exterior power $\bigwedge^k V_k$ (for any $k \ge 1$ and any sequence $V_1 \rightarrowtail \ldots \rightarrowtail V_k$ in $\cP(X)$). We will call this the \emph{standard assembly of power operations on $\cP(X)$}. But we could for example also replace the $k^\mathrm{th}$ exterior power with the $k^\mathrm{th}$ symmetric power. Further important examples are contained in \cref{sec: Equivariant K-theory}, in \cite[Section 10]{GraysonExterior} and in \cite[Section 3]{GunnSchw}, including some where the categories $\cM_n$ are not equal to each other.

\medskip

Let $\cM$ be an exact category. Slightly deviating from standard notations, let $C^\infty \cM$ denote the category of chain complexes in $\cM$ supported in non-negative degrees and let $C\cM$ denote the sub-category consisting of complexes which are in addition bounded. If $\cM$ is idempotent complete, the Dold-Kan correspondence (see \cite[Theorem 1.2.3.7]{HigherAlg} for a version which is general enough for our purposes) gives an equivalence between the category $C^\infty \cM$ and the category of simplicial objects in $\cM$. Let $N$ and $\Gamma$ denote the corresponding mutually inverse functors where $N$ is the normalized chain complex functor.

\medskip

For the rest of this section we assume we are given an assembly of power operations as in \cref{def: power operations}. We furthermore assume that every category~$\cM_n$ is idempotent complete.

\medskip

The following definitions are variants of the constructions in \cite[Sections 3 and 5]{HKT17}.

For $n, p \ge 0$, let
\[ \otimes_\Delta \colon C^\infty \cM_n \times C^\infty \cM_p \rightarrow C^\infty \cM_{n+p}\]
denote the simplicial tensor product as defined in \cite[Section~5]{HKT17}. We recall it maps a pair of complexes $V., W.$ to $N(\Gamma V. \otimes \Gamma W.)$ where, by abuse of notation, we write $\Gamma V. \otimes \Gamma W.$ also for the diagonal of the a priori bi-simplicial object $\Gamma V. \otimes \Gamma W.$ in $\cM_{n+p}$.\\
Now, let $V_{1}. \rightarrowtail \ldots \rightarrowtail V_k.$ be an object in $F_k(C^\infty \cM_n)$. We consider the resulting sequence $\Gamma V_1. \rightarrowtail \ldots \rightarrowtail \Gamma V_k.$ of simplicial objects in $\cM_n$ as a simplicial object in $F_k(\cM_n)$, then compose with the given power operation functor $F_k(\cM_n) \rightarrow \cM_{nk}$ to obtain the simplicial object $\Gamma V_1. \wedge \ldots \wedge \Gamma V_k.$ in $\cM_{nk}$, and finally define
\[V_1. \wedge \ldots \wedge V_k. := N (\Gamma V_1. \wedge \ldots \wedge \Gamma V_k.) \quad \textrm{ in } \quad  C^\infty \cM_{nk}.\]

\begin{prop}\label{prop: DoldPuppe Operations}
The sequence of exact categories $C^\infty \cM_n$, $n \ge 0$, together with the functors defined above is again an assembly of power operations. By restricting to the subcategories $C\cM_n$, $n \ge 0$, we obtain another assembly of power operations.
\end{prop}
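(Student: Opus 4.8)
The plan is to verify that the Dold–Puppe construction preserves every piece of structure and every axiom in Definition \ref{def: power operations}. The skeleton of the argument should be: (1) the categories $C^\infty\cM_n$ are exact and idempotent complete, so the constructions above make sense; (2) the functoriality and bi-exactness of $\otimes_\Delta$ and of the new power operations; (3) the associativity of $\otimes_\Delta$; (4) the natural maps (E1)--(E2) and the commuting diagrams and exact sequence (E3)--(E5). The guiding principle throughout is that $N$ and $\Gamma$ are mutually inverse equivalences, so that checking a statement in $C^\infty\cM_n$ is the same as checking the corresponding statement for simplicial objects; and a statement about simplicial objects can be checked degreewise, reducing everything to the given assembly of power operations on the $\cM_n$ themselves.

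First I would record the exactness of $C^\infty\cM_n$ (with the degreewise admissible short exact sequences), noting that idempotent completeness of $\cM_n$ passes to $C^\infty\cM_n$ and hence the Dold–Kan equivalence applies; here one should cite \cite{HKT17} for the fact, used implicitly, that $\otimes_\Delta$ is bi-exact (this follows because $\otimes$ is bi-exact on each $\cM_n$, tensoring simplicial objects is done degreewise, the diagonal of a bi-simplicial object is a degreewise operation, and $N$ carries degreewise admissible exact sequences of simplicial objects to admissible exact sequences of complexes). The associativity of $\otimes_\Delta$ is inherited from associativity of $\otimes$ together with the compatibility of the diagonal with iterated bi-simplicial objects; this is already in \cite{HKT17}. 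For the new power operations $V_1.\wedge\dots\wedge V_k. := N(\Gamma V_1.\wedge\dots\wedge\Gamma V_k.)$, functoriality on $F_k(C^\infty\cM_n)$ is immediate since $\Gamma$ is a functor sending a sequence of admissible monomorphisms of complexes to a simplicial object in $F_k(\cM_n)$ (admissible monomorphisms of complexes correspond under $\Gamma$ to degreewise admissible monomorphisms of simplicial objects, as $N$ and $\Gamma$ are exact), the given functor $F_k(\cM_n)\to\cM_{nk}$ is applied degreewise, and $N$ is a functor; the normalization that this functor is the identity for $k=1$ holds because $N\Gamma=\id$.

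The axioms (E1)--(E5) I would verify by transporting them across Dold–Kan. For each of the required natural maps in (E1)--(E2), the corresponding natural transformation between simplicial objects is obtained by applying the given (E1)--(E2) maps degreewise to $\Gamma V_1.,\dots$; one then applies $N$. The point to spell out is that $\otimes_\Delta$ is \emph{defined} via $N(\Gamma(-)\otimes\Gamma(-))$, so that after applying $\Gamma$ the relevant objects again become diagonals of bi-simplicial objects built degreewise from the $\cM_n$, and the degreewise maps assemble into a simplicial map whose image under $N$ is the desired map; associativity of these maps follows from the degreewise associativity assumed in the original assembly. For (E3)--(E4), commutativity of each square is checked after applying $\Gamma$ and then in each simplicial degree, where it reduces to the commuting squares assumed for the $\cM_n$. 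For (E5), exactness of the displayed three-term sequence in $C^\infty\cM_{nk}$ is equivalent, since $N$ reflects exactness (it is an exact equivalence), to exactness of the corresponding sequence of simplicial objects, which holds because exactness of simplicial objects is degreewise and in each degree we are looking at the (E5) sequence of the original assembly.

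The main obstacle, and the one point deserving genuine care rather than a one-line ``degreewise'' remark, is the interaction between the power operations and the simplicial tensor product $\otimes_\Delta$ in axioms (E1)--(E5): the objects appearing there are not simply degreewise tensor products but diagonals of bi-simplicial objects, so one must be careful that applying $\Gamma$ to an expression such as $V_1.\wedge\dots\wedge W.\otimes_\Delta X.\wedge\dots\wedge Y.$ genuinely recovers a bi-simplicial object to which the original (E1)--(E5) data apply degreewise in each bi-degree, and that the diagonal functor is compatible with the natural maps. This is exactly the kind of bookkeeping already carried out in \cite[Sections~3 and~5]{HKT17}, so I would lean on those computations, and the passage to the bounded subcategories $C\cM_n$ is then formal: $\otimes_\Delta$ and the power operations preserve boundedness (the normalized complex of $\Gamma V_1.\wedge\dots\wedge\Gamma V_k.$ is bounded whenever the $V_i.$ are, since exterior-type powers of a bounded complex are bounded — this uses \cref{prop: exterior powers of quasi-isom} or its underlying degreewise boundedness estimate), so the restricted data inherit all the axioms from the unbounded case.
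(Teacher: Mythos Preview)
Your approach is the same as the paper's: transport each axiom across the Dold--Kan equivalence, verify it degreewise for simplicial objects using the original assembly on the $\cM_n$, and apply $N$; the paper only writes out (E2) and (E5) explicitly as representative cases, and for (E5) it invokes the fact that $N$ is a direct summand of the unnormalised chain complex functor rather than your (equally valid) appeal to $N$ being an exact equivalence.

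One correction is needed for the bounded case. Your reference to \cref{prop: exterior powers of quasi-isom} is misplaced: that proposition concerns preservation of quasi-isomorphisms, not boundedness. The statement that $N(\Gamma V_1.\wedge\dots\wedge\Gamma V_k.)$ is bounded whenever the $V_i.$ are is genuinely non-obvious, since $\Gamma$ of a bounded complex has infinitely many nonzero simplicial degrees; the paper handles this by citing \cite[Lemma~5.6]{HKT17} for $\otimes_\Delta$ and \cite[Corollary~4.6]{SatkKoe} for the power operations, and you should do the same rather than gesture at a ``degreewise boundedness estimate''.
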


\begin{proof}
The second statement follows from the first statement by \cite[Lemma~5.6]{HKT17} and \cite[Corollary~4.6]{SatkKoe}. To prove the first statement we observe that there is an assembly of power operations on the category of simplicial objects in the categories $\cM_n$ and that we may transport this to $C^\infty\cM_n$ using the Dold-Kan correspondence.
To explain this argument in detail, we include, as an example, the proof of axioms (E2) and~(E5).\\
For (E2), let $V. \rightarrowtail \dots \rightarrowtail W. \rightarrowtail X. \rightarrowtail \ldots \rightarrowtail Y.$ be a sequence of (levelwise) admissible monomorphisms between complexes. Applying the functor $\Gamma$ we obtain the sequence
\[\Gamma V. \rightarrowtail \ldots \rightarrowtail \Gamma W. \rightarrowtail \Gamma X. \rightarrowtail \ldots \rightarrowtail \Gamma Y\]
of levelwise admissible mnonorphisms between simplicial objects.
We now apply (E2) for $\cM_n$, $n \ge 0$, levelwise. The maps obtained assemble to a simplical map
\[\Gamma V. \wedge \ldots \wedge \Gamma W. \wedge \Gamma X. \wedge \ldots \wedge \Gamma Y. \rightarrow \Gamma V. \wedge \ldots \wedge \Gamma W. \otimes \Gamma \frac{X.}{W.} \wedge \ldots \wedge \Gamma \frac{Y.}{W.}.\]
Finally, we apply the functor $N$ and obtain the required map
\[V. \wedge \ldots \wedge W. \wedge  X. \wedge \ldots \wedge  Y. \rightarrow V. \wedge \ldots \wedge  W. \otimes_\Delta  \frac{X.}{W.} \wedge \ldots \wedge  \frac{Y.}{W.}.\]
These maps are obviously natural and associative again. \\
For (E5), let $U. \rightarrowtail \ldots \rightarrowtail V. \rightarrowtail W'. \rightarrowtail W. \rightarrowtail X. \rightarrowtail \ldots \rightarrowtail Y.$ be a sequence of complexes. From the constructions above, we obtain the sequence of simplicial objects:
\[\resizebox{0.95\hsize}{!}{$\begin{aligned}
0 \rightarrow \Gamma U. \wedge \ldots \wedge \Gamma V. \wedge \Gamma W'. \wedge  \Gamma X. \wedge & \ldots \wedge \Gamma Y.
\rightarrow \Gamma U. \wedge \ldots \wedge \Gamma V. \wedge \Gamma W. \wedge \Gamma X. \wedge  \ldots \wedge \Gamma Y.\\
&\rightarrow  \Gamma U. \wedge \ldots \wedge \Gamma V. \otimes \frac{\Gamma W.}{\Gamma W'.} \wedge \frac{\Gamma X.}{\Gamma W'.} \wedge  \ldots \wedge \frac{\Gamma Y}{\Gamma W'.} \rightarrow 0
\end{aligned}$}\]
This sequence is exact levelwise (by (E5) for $\cM_n$, $n \ge 0$). As the normalized chain complex functor $N$ is a direct summand of the (ordinary) associated chain complex functor (see paragraph before \cite[Proposition~1.2.3.17]{HigherAlg}), the resulting sequence
{\footnotesize\[ \begin{aligned}
0 \rightarrow U. \wedge \ldots \wedge V. \wedge {W'.} \wedge {X.} \wedge  \ldots \wedge Y. &
\rightarrow U. \wedge \ldots \wedge V. \wedge {W.} \wedge {X.} \wedge  \ldots \wedge Y.\\
&\rightarrow  U. \wedge \ldots \wedge V. \otimes_\Delta \frac{W.}{W'.} \wedge \frac{X.}{W'.} \wedge  \ldots \wedge \frac{Y.}{W'.} \rightarrow 0
\end{aligned}\]}
of complexes is exact as well, as was to be shown.
\end{proof}

For the next definition, we in addition assume that every exact category~$\cM_n$ is equipped with a subcategory $w\cM_n$ of weak equivalences in the sense of \cite[Section~1.2]{Wald} or \cite[Appendix A]{Grayson2012}. We then define the subcategory $wF_k(\cM_n)$ of $F_k(\cM_n)$ by keeping all objects of $F_k(\cM_n)$ but keeping only those morphisms in $F_k(\cM_n)$ which levelwise belong to $w\cM_n$.

\begin{defn}\label{def: preserves weak equivalences}
We say that the {\em assembly of power operations preserves weak equivalences} if the given power  operations
\[(V,W) \mapsto V\otimes W \quad \textrm{ and } \quad (V_1 \rightarrowtail \ldots \rightarrowtail V_k) \mapsto V_1 \wedge \ldots \wedge V_k\]
restrict to well-defined functors
\[w\cM_n \times w\cM_p \rightarrow w\cM_{n+p} \quad \textrm{ and } \quad wF_k(\cM_n) \rightarrow w\cM_{nk}\]
for all $k,n$ and $p$.
\end{defn}

The main purpose of the rest of this section is to construct examples for this definition, see \cref{Prop: DoldPuppeOperations II} and \cref{cor: Dold-Puppe operations IV} below. The crucial input here, in the case of the standard assembly of power operations on $\mathcal{P}(X)$, is proved in \cref{prop: exterior powers of quasi-isom} at the end of this section.

\medskip

The second sentence in \cite[A.9.2]{ThomTro} implies that every idempotent complete exact category $\cM$ supports long exact sequences in the sense of \cite[Definition~1.4]{Grayson2012}. This assures that we have a good notion of quasi\-/isomorphisms for complexes in $C\cM$, see \cite[Definition~2.6]{Grayson2012} and the discussion after \cite[Definition~1.4]{Grayson2012}. In particular, the subcategory $qC\cM$ of $C\cM$ obtained from $C\cM$ by removing all morphisms in $C\cM$ which are not quasi-isomorphisms is a subcategory of weak equivalences, see \cite[Section~2]{Grayson2012}.
Let $C^\chi\cM$ denote the full subcategory of $C\cM$ consisting of complexes $V.$ in $C\cM$ whose Euler characteristic $\chi(V.)$ in $K_0(\cM)$ vanishes, and let $C^q \cM$ denote the full subcategory of $C^\chi\cM$ consisting of acyclic complexes.\footnote{The superscript ${}^q$ here is short hand for `\emph{q}uasi-isomorphic to the zero complex'. The letter $q$ in mathematical expressions in this paper will never denote a variable (such as an integer).}

\medskip

In the following proposition we refer to the assembly of power operations on $C\cM_n$, $n\ge 0$, constructed above, see \cref{prop: DoldPuppe Operations}. In particular, for every morphism $f. \colon V. \rightarrow V'.$ in $C\cM_n$ and for every $k\ge 1$, the induced morphism
$\bigwedge^k(f.) \colon \bigwedge^k (V.) := V. \wedge \ldots \wedge V. \rightarrow V'. \wedge \ldots \wedge V'. =: \bigwedge^k (V'.)$ is defined.

\begin{prop}\label{Prop: DoldPuppeOperations II} Assume that, for every quasi-isomorphism $f.$ in $C\cM_n$ and for every $k \ge 1$, also $\bigwedge^k(f.)$ is a quasi-isomorphism. Then the assembly of power operations on $C\cM_n$, $n \ge 0$, preserves quasi-isomorphisms in the sense of \cref{def: preserves weak equivalences}. Furthermore, restricting to $C^\chi\cM_n$ and to $C^q\cM_n$ yields further assemblies of power operations.
\end{prop}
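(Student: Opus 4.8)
The plan is to prove three statements in turn: first, that the assembly on the categories $C\cM_n$ preserves quasi-isomorphisms in the sense of \cref{def: preserves weak equivalences}; then that the full subcategories $C^q\cM_n$, and finally that the full subcategories $C^\chi\cM_n$, inherit from it the structure of an assembly of power operations. The last two will follow quickly from the first together with additivity of the Euler characteristic and elementary facts about $\lambda$-operations, so the work sits in the first statement. There one has to check two things. That $\otimes_\Delta$ sends a pair of quasi-isomorphisms to a quasi-isomorphism is the easier half: writing $f.\otimes_\Delta g.$ as the composite $(f.\otimes_\Delta\id)\circ(\id\otimes_\Delta g.)$ and invoking 2-out-of-3, one reduces to the case of $f.\otimes_\Delta\id_{W.}$ (and symmetrically); using the functorial mapping cylinder, a quasi-isomorphism $f.$ factors as an admissible monomorphism $V.\rightarrowtail\mathrm{Cyl}(f.)$ with cokernel $\mathrm{cone}(f.)$ followed by a chain-homotopy equivalence $\mathrm{Cyl}(f.)\to W.$, so since $\otimes_\Delta$ is bi-exact and preserves chain-homotopy equivalences the claim reduces to: the simplicial tensor product of an acyclic complex with an arbitrary bounded complex is acyclic. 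That I would prove by dévissage — a bounded acyclic complex in an idempotent complete exact category is built by finitely many admissible extensions from the elementary contractible complexes $(M\xrightarrow{\id}M)$, $\otimes_\Delta$ is exact in each variable, and $(M\xrightarrow{\id}M)\otimes_\Delta W.$ is chain-contractible because $\Gamma(M\xrightarrow{\id}M)$ is simplicially contractible and $\otimes_\Delta$ preserves simplicial homotopies; one may alternatively quote the corresponding statement from \cite[Section~5]{HKT17}.

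The power operations are the heart of the matter. Given a morphism $(f_1.,\dots,f_k.)$ of $qF_k(C\cM_n)$, with admissible monomorphisms $V_1.\rightarrowtail\dots\rightarrowtail V_k.$ and $W_1.\rightarrowtail\dots\rightarrowtail W_k.$ and compatible quasi-isomorphisms $f_i.\colon V_i.\to W_i.$, one must show that $V_1.\wedge\dots\wedge V_k.\to W_1.\wedge\dots\wedge W_k.$ is a quasi-isomorphism. I would first factor $(f_i.)$, levelwise and functorially in $i$, through the mapping cylinders as $(V_i.)\rightarrowtail(\mathrm{Cyl}(f_i.))\twoheadrightarrow(W_i.)$, where the first morphism is a levelwise admissible monomorphism with acyclic cokernels and the second a levelwise split admissible epimorphism with (contractible, hence) acyclic kernels; this reduces the problem to morphisms of sequences of these two shapes. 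In each case one runs a dévissage via the axioms: refining the relevant sequence by inserting the intervening acyclic sub- or quotient complexes and applying (E2) and (E5) repeatedly exhibits the target of the power operation with a finite filtration all of whose graded pieces, other than the one equal to $V_1.\wedge\dots\wedge V_k.$, are $\otimes_\Delta$-products at least one factor of which is assembled from acyclic complexes — hence acyclic by the tensor-product case; 2-out-of-3 for quasi-isomorphisms is used along the way to ensure that the successive quotients of the sequences remain quasi-isomorphisms. The hypothesis that $\bigwedge^k(f.)$ is a quasi-isomorphism whenever $f.$ is enters precisely to control the graded piece coming from a constant sequence. The main obstacle, I expect, is organising this dévissage cleanly — in particular threading the acyclic kernels into the refined sequence so that (E5) becomes applicable; the argument runs parallel to the corresponding reasoning in \cite{GraysonExterior}.

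Finally, the two sub-assemblies. Acyclic complexes are closed under extensions, so each $C^q\cM_n$ is an exact subcategory of $C\cM_n$; since for acyclic $V.,W.$ the maps $V.\to 0$ are quasi-isomorphisms, the first statement gives that $V.\otimes_\Delta W.\to 0$ and (applied to the morphism from an object of $F_k(C^q\cM_n)$ to the zero sequence) $V_1.\wedge\dots\wedge V_k.\to 0$ are quasi-isomorphisms, so these objects are acyclic; the data and conditions (E1)--(E5) are then inherited by restriction, the outer terms of the sequence in (E5) being acyclic as just shown and its middle term by 2-out-of-3. Likewise $C^\chi\cM_n$ is an exact subcategory because $\chi$ is additive, it is closed under $\otimes_\Delta$ because $\chi(V.\otimes_\Delta W.)=\chi(V.)\cdot\chi(W.)$, and it is closed under the power operations because $\chi(V_1.\wedge\dots\wedge V_k.)$ is, by a further dévissage via (E5) (or by \cite[Corollary~4.6]{SatkKoe} together with standard $\lambda$-ring identities), a universal polynomial without constant term in the classes $[V_j./V_{j-1}.]\in K_0(\cM_n)$ (with $V_0.:=0$): each monomial is a product of operations $\lambda^{a}$ of positive total degree $k\ge 1$, and $\lambda^{a}(0)=0$ for $a\ge 1$, so $\chi(V_i.)=0$ for all $i$ forces every $[V_j./V_{j-1}.]=\chi(V_j.)-\chi(V_{j-1}.)$ to vanish. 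The conditions (E1)--(E5) for $C^\chi\cM_n$ are again inherited, the terms of (E5) lying in it by the same computation and additivity of $\chi$.
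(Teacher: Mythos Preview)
Your approach is workable in outline but takes a considerably longer route than the paper, and the epi half of your cylinder reduction is not quite tied down. For the tensor product the paper simply invokes Eilenberg--Zilber to transfer the well-known statement for ordinary total tensor products to $\otimes_\Delta$; your mapping-cylinder/d\'evissage argument also works. For the sub-assemblies on $C^q$ and $C^\chi$ your reasoning is close in spirit to the paper's; the paper packages the $C^\chi$ case via the isomorphism $K_0(\cM_n)\cong\tilde K_0(\cM_n)$ (where $\tilde K_0(\cM_n):=\mathrm{coker}(K_0(C^q\cM_n)\to K_0(C\cM_n))$), reads off from this that $\chi$ commutes with $\lambda^k$, and then reruns the very same (E5) induction it used for quasi-isomorphisms.

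The real divergence is in showing $V_1.\wedge\ldots\wedge V_k.\to V'_1.\wedge\ldots\wedge V'_k.$ is a quasi-isomorphism. You factor through mapping cylinders and then d\'evisser separately in the mono-with-acyclic-cokernel and split-epi-with-acyclic-kernel cases. The mono case goes through as you sketch; for the epi case, ``threading the acyclic kernels into the refined sequence'' is genuinely awkward --- the $K_i.$ do not sit inside $W_1.\rightarrowtail\ldots\rightarrowtail W_k.$ --- and the cleanest fix (which you do not mention) is to use the section $W_i.\hookrightarrow\mathrm{Cyl}(f_i.)$ to convert it back into the mono case. The paper bypasses all of this. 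It applies (E5) directly to the given sequence, successively replacing $V_{k-1}.,V_{k-2}.,\ldots,V_1.$ by $V_k.$: this yields $k-1$ short exact sequences linking $V_1.\wedge\ldots\wedge V_k.$ to $\bigwedge^kV_k.$, with successive quotients of the form $(V_1.\wedge\ldots\wedge V_{k-j-1}.)\otimes_\Delta\bigwedge^{j+1}(V_k./V_{k-j}.)$. Since each $V_k./V_i.\to V'_k./V'_i.$ is again a quasi-isomorphism, the hypothesis handles the exterior-power factors directly, the already-proven tensor case handles the rest, and a bottom-up induction finishes. The insight you are missing is that the hypothesis already covers \emph{arbitrary} quasi-isomorphisms of single complexes, so there is no need to first reduce to structurally simple ones via cylinders.
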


\begin{proof}
We first verify that, if $V. \rightarrow V'.$ and $W. \rightarrow W'.$ are quasi\hyp{}isomorphisms in $C\cM_n$ and $C\cM_p$, respectively, then so is the induced morphism between the simplicial tensor products $V. \otimes_\Delta W.$ and $V'. \otimes_\Delta W'$. This is well-known and easy to prove for the induced morphism between the total complexes of the ordinary tensor products. Then the Eilenberg-Zilber Theorem \cite[Satz~2.9]{DP} implies that it is also true for the induced map between the simplicial tensor products, as claimed.

Now, let $(V_1. \rightarrowtail \ldots \rightarrowtail V_k.) \longrightarrow (V'_1. \rightarrowtail \ldots \rightarrowtail V'_k.)$ be a morphism in $qF_k(C\cM_n)$, i.e., a levelwise quasi-isomorphism.
By axiom (E5), we have the  $k-1$ short exact sequences of complexes
\[\resizebox{0.83\hsize}{!} {$
 0 \;\rightarrow\; V_1. \wedge \ldots \wedge V_k. \; \rightarrow \; V_1. \wedge \ldots \wedge V_{k-2}. \wedge V_k. \wedge V_k. \; \rightarrow \; V_1. \wedge \ldots \wedge V_{k-2}. \otimes_\Delta \bigwedge^2 \left(\frac{V_k.}{V_{k-1}.}\right) \; \rightarrow \; 0$}\]
\[\resizebox{\hsize}{!} {$0 \;\rightarrow\; V_1. \wedge \ldots \wedge V_{k-2}. \wedge V_k. \wedge V_k. \;\rightarrow \; V_1. \wedge \ldots \wedge V_{k-3}. \wedge V_k. \wedge V_k. \wedge V_k. \;\rightarrow \; V_1. \wedge \ldots \wedge V_{k-3}. \otimes_\Delta \bigwedge^3\left(\frac{V_k.}{V_{k-2}.}\right) \;\rightarrow \; 0$}\]
\[\vdots\]
\[\resizebox{0.52\hsize}{!}{$ 0 \; \rightarrow \; V_1. \wedge V_k. \wedge \ldots \wedge V_k. \; \rightarrow \; \bigwedge^k V_k. \; \rightarrow \; \bigwedge^k \left(\frac{V_k.}{V_1.}\right) \; \rightarrow \; 0,$}\]
and similarly for $V$ replaced with $V'$. By assumption, the induced morphisms
$\bigwedge^k V_k. \rightarrow \bigwedge^k V'_k.$ and $\bigwedge^k\left(\frac{V_k.}{V_1.}\right) \rightarrow \bigwedge^k\left(\frac{V'_k.}{V'_1.}\right)$ are quasi-isomorphisms.

Then the bottom short exact sequence shows that the induced morphism $V_1.\wedge  V_k. \wedge \ldots \wedge V_k.\rightarrow V'_1.\wedge  V'_k. \wedge \ldots \wedge V'_k.$ is a quasi-isomorphism as well. We iterate this conclusion bottom up and finally arrive at the desired conclusion that the induced morphism $V_1. \wedge \ldots \wedge V_k. \rightarrow V'_1. \wedge \ldots \wedge V'_k.$ is a quasi-isomorphism. This finishes the proof of the first statement in \cref{Prop: DoldPuppeOperations II}.\\
This first statement immediately implies that restricting the power operations from $C\cM_n$, $n \ge 0$, to $C^q\cM_n$, $n \ge 0$, yields a well-defined assembly of power operations.

We now prove the analogous statement for $C^\chi \cM_n$, $n\ge 0$. To this end, we'll show that the Euler characteristic commutes with exterior powers and hence that applying $\bigwedge^k$ to any complex in $C^\chi \cM_n$ yields a complex in $C^\chi \cM_{nk}$. (Similar reasoning can be applied to show the easier statement that the simplicial tensor product of complexes in $C^\chi \cM_n$ and $C^\chi \cM_p$ is  in~$C^\chi \cM_{n+p}$.)  Let
\begin{equation}\label{eq: definition of K_0Tilde}
\tilde{K}_0(\cM_n) := \mathrm{coker}(K_0(C^q\cM_n) \rightarrow K_0(C\cM_n))
\end{equation}

and let $K_0(C\cM_n)$, $n \ge 0$, be endowed with the product induced by the simplicial tensor product and with the exterior power operations $\lambda^k$ given by \cref{prop: DoldPuppe Operations}. Note that, in the less general situation considered in \cite[Section~6]{HKT17}, these are the same as the ones constructed there. The map
\[\lambda_t\colon K_0(C\cM_n) \rightarrow 1+  \prod_{k \ge 1} K_0(C\cM_{nk})t^k, \quad x \mapsto \sum_{k \ge 0} \lambda^k(x)t^k,\]
is then a homomorphism, as explained for example in \cite[Section 2]{GraysonExterior}. As acyclic complexes are mapped to acyclic complexes under $\bigwedge^k$, this homomorphism induces a homomorphism
\[\lambda_t\colon \tilde{K}_0(\cM_n) \rightarrow 1+  \prod_{k \ge 1} \tilde{K}_0(\cM_{nk})t^k\]
and in particular power operations $\lambda^k$ for $\tilde{K}_0(\cM_n)$.
By construction, the diagram
\[\xymatrix{\tilde{K}_0(\cM_n) \ar[r]^{\lambda^k} & \tilde{K}_0(\cM_{nk})\\
K_0(\cM_n) \ar[u] \ar[r]^{\lambda^k} & K_0(\cM_{nk}) \ar[u]
}\]
commutes; here, the vertical arrows are given by $V \mapsto V[0]$ where $V[0]$ denotes the complex whose object in degree $0$ is $V$ and whose object in every other degree is $0$. These vertical maps are bijective and their inverses are given by the Euler characteristic (see \cref{Lem: GrothendieckGroupComplexes} below). Hence, the Euler characteristic commutes with the power operations $\lambda^k$. In particular, applying $\bigwedge^k$ to any complex in $C^\chi \cM_n$ yields a complex in $C^\chi \cM_{nk}$. Using the same inductive argument as above, we obtain that $V_1. \wedge \ldots \wedge V_k.$ belongs to $C^\chi \cM_{nk}$ for any sequence $V_1. \rightarrowtail \ldots \rightarrowtail V_k.$ in $F_k(C^\chi \cM_n)$, as was to be proved.
\end{proof}

\begin{lem}\label{Lem: GrothendieckGroupComplexes}
Let $\cM$ be an exact category and let $\tilde{K}_0(\cM)$ be defined as in~\eqref{eq: definition of K_0Tilde}. Then the assignment $V \mapsto V[0]$ induces an isomorphism
\[K_0(\cM) \overset{\sim}{\longrightarrow} \tilde{K}_0(\cM).\]
Its inverse is given by mapping the class of a complex $V. \in C(\cP)$ to its Euler characteristic $\chi(V.):= \sum_{i \ge 0} (-1)^i [V_i] \in K_0(\cM)$.
\end{lem}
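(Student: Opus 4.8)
The plan is to write down both maps explicitly and check they are mutually inverse; the only non-formal ingredient is one of the two composites.

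First I would note that $V \mapsto V[0]$ is an exact functor $\cM \to C\cM$, since it sends an admissible short exact sequence in $\cM$ to a levelwise admissible short exact sequence of complexes concentrated in degree $0$. Composing the induced map $K_0(\cM) \to K_0(C\cM)$ with the canonical surjection $K_0(C\cM) \twoheadrightarrow \tilde K_0(\cM)$ yields a homomorphism $\phi \colon K_0(\cM) \to \tilde K_0(\cM)$. For the other direction, since $[\,\cdot\,]$ is additive on short exact sequences in $\cM$, the assignment $V. \mapsto \chi(V.) = \sum_{i \ge 0}(-1)^i [V_i]$ is additive on levelwise admissible short exact sequences of complexes, hence induces $\chi \colon K_0(C\cM) \to K_0(\cM)$. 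I claim $\chi$ vanishes on the image of $K_0(C^q\cM)$: if $V.$ is acyclic then, using that $\cM$ supports long exact sequences (see the discussion preceding the lemma), the cycle objects $Z_i \subseteq V_i$ are admissible subobjects sitting in short exact sequences $0 \to Z_i \to V_i \to Z_{i-1} \to 0$ with $Z_{-1} = 0$ and $Z_i = 0$ for $i$ large, so $\chi(V.) = \sum_i(-1)^i([Z_i] + [Z_{i-1}]) = 0$ by telescoping. Thus $\chi$ descends to $\bar\chi \colon \tilde K_0(\cM) \to K_0(\cM)$.

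The composite $\bar\chi\circ\phi$ sends $[V]$ to $\chi(V[0]) = [V]$, so it is the identity. For $\phi\circ\bar\chi = \id$ it suffices to check equality on the classes $[V.]$ of complexes $V.\in C\cM$, as these generate $\tilde K_0(\cM)$. Using the brutal truncations $\sigma_{\ge i}V.$ one gets levelwise admissible short exact sequences $0 \to \sigma_{\ge i+1}V. \to \sigma_{\ge i}V. \to V_i[i] \to 0$, where $V_i[i]$ denotes $V_i$ placed in degree $i$; iterating gives $[V.] = \sum_i [V_i[i]]$ in $K_0(C\cM)$ and hence in $\tilde K_0(\cM)$. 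It then remains to show $[V_i[i]] = (-1)^i [V_i[0]]$ in $\tilde K_0(\cM)$, which follows by descending induction from the acyclic "disk" complex $D$ with $V_i$ in degrees $j$ and $j-1$ and identity differential (which lies in $C\cM$ for $j \ge 1$): the short exact sequence $0 \to V_i[j-1] \to D \to V_i[j] \to 0$ and $[D] = 0$ in $\tilde K_0(\cM)$ give $[V_i[j]] = -[V_i[j-1]]$. Summing yields $[V.] = \sum_i(-1)^i[V_i[0]] = \phi(\chi(V.)) = \phi(\bar\chi([V.]))$.

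The main obstacle is this last step, namely recognising that modulo acyclic complexes every class in $\tilde K_0(\cM)$ is concentrated in degree $0$; the brutal-truncation and disk-complex bookkeeping handles it, and one should check that all intermediate complexes (the $V_i[i]$ with $i \ge 0$ and the disks $D$ with $j \ge 1$) indeed remain bounded and non-negatively supported, i.e. lie in $C\cM$. (Alternatively, the lemma is the $K_0$-level shadow of a Gillet--Waldhausen type statement, that $\cM \hookrightarrow C\cM$ induces an isomorphism on $K_0$ after killing acyclic complexes; but the argument above is elementary and self-contained.)
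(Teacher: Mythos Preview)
Your proof is correct and follows essentially the same route as the paper: brutal (``naive'') truncation to decompose $[V.]$ as $\sum_i [V_i[i]]$ in $K_0(C\cM)$, and then the cone short exact sequence (your ``disk'' complex $D$ is precisely $\mathrm{cone}(\id_{V_i[j-1]})$) to obtain $[V[j]] = -[V[j-1]]$ in $\tilde K_0(\cM)$. The only difference is that you additionally spell out why $\chi$ descends to $\tilde K_0(\cM)$ via the cycle-object telescoping argument, a point the paper leaves implicit.
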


This elementary lemma is a variant of the probably more widely known fact that the factor group of $K_0(C\cM)$ modulo the usual shifting relations is isomorphic to $K_0(\cM)$. We include a proof for the reader's convenience.

\begin{proof}
We obviously have $\chi(V[0])= V$ for every $V \in \cM$. It therefore suffices to prove that $ \chi(V.)[0] = [V.]$ in $\tilde{K}_0(\cM)$ for every $V. \in C\cM$. An easy induction using naive truncation shows that
\[[V.] = \sum_{i \ge 0} [V_i[i]] \quad \textrm{ in } \quad K_0(C\cM). \]
Furthermore, for any $V \in \cM$, the standard exact sequence
\[0 \rightarrow V[i] \rightarrow \mathrm{cone}(\mathrm{id}_{V[i]}) \rightarrow V[i+1] \rightarrow 0\]
shows that $[V[i+1]] = -[V[i]]$ in $\tilde{K}_0(\cM)$. Hence we have
\[[V.] = \sum_{i \ge 0} (-1)^i[V_i[0]] = \chi(V.)[0] \quad \textrm{ in } \quad \tilde{K}_0(\cM),\]
as was to be shown.
\end{proof}

Let $\cM$ be an exact category. We recall from \cite{Grayson2012} and \cite{HKT17} that a {\em binary complex} $\VV=(V., d, \tilde{d})$ in~$\cM$ is a graded object~$V.$ in $\cM$ together with two degree~$-1$ maps $d, \tilde{d}\colon V. \rightarrow V.$ such that both $d^2=0$ and $\tilde{d}^2=0$.  If $d=\tilde{d}$,  the complex~$\VV$ is said to be {\em diagonal}. A {\em morphism between binary complexes} is a degree 0 map between the underlying graded objects that is a chain map with respect to both differentials. The obvious definition of short exact sequences turns the category of binary complexes into an exact category. The exact full subcategories consisting of binary complexes $\VV$ in $\cM$ such that both $(V., d)$ and $(V.,\tilde{d})$ belong to $C\cM$ or $C^q\cM$ will be denoted $B\cM$ and $B^q\cM$, respectively.

We continue to assume that we are given an assembly of power operations as in \cref{def: power operations} where every category $\cM_n$ is idempotent complete. The following corollary is a variant of the construction in \cite[Section~4]{HKT17}.

\begin{cor}\label{cor: Dold-Puppe operations III}
By applying the construction above to each of the two differentials in binary complexes and restricting the resulting power operations to $B\cM_n$, $n\ge 0$, we obtain another assembly of power operations. Under the same assumption as in \cref{Prop: DoldPuppeOperations II}, restricting to  $B^q\cM_n$, $n \ge 0$, yields a further assembly of power operations.
\end{cor}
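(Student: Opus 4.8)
The plan is to apply \cref{prop: DoldPuppe Operations} and \cref{Prop: DoldPuppeOperations II} to the two differentials of a binary complex \emph{separately}. Given a sequence $\VV_1\rightarrowtail\dots\rightarrowtail\VV_k$ in $F_k(B\cM_n)$ with $\VV_i=(V_i.,d_i,\tilde d_i)$, we run the construction of \cref{prop: DoldPuppe Operations} once on the underlying sequence $(V_1.,d_1)\rightarrowtail\dots\rightarrowtail(V_k.,d_k)$ and once on $(V_1.,\tilde d_1)\rightarrowtail\dots\rightarrowtail(V_k.,\tilde d_k)$, obtaining two complexes in $C\cM_{nk}$, and declare $\VV_1\wedge\dots\wedge\VV_k$ to be the binary complex assembled from these two differentials; likewise for $\otimes_\Delta$. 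For this to define a binary complex at all, the two complexes must literally share one underlying graded object; this is the only substantive point and is dealt with below. Granting it, \cref{prop: DoldPuppe Operations} already shows that each of the two underlying complexes is bounded, so $\VV_1\wedge\dots\wedge\VV_k$ lies in $B\cM_n$, and \cref{Prop: DoldPuppeOperations II} shows that it lies in $B^q\cM_n$ whenever the $\VV_i$ do. Everything else is then formal: the structure maps of axioms (E1)--(E5) for the binary constructions are, by the very definition just given, the pairs formed by the corresponding maps of \cref{prop: DoldPuppe Operations} for the two differentials, so their naturality, associativity, and the exactness statement (E5) follow from those of \cref{prop: DoldPuppe Operations}; bi-exactness of $\otimes_\Delta$ and functoriality of the power operations hold because a sequence of binary complexes is short exact precisely when it is short exact for each differential; and, under the hypothesis of \cref{Prop: DoldPuppeOperations II}, the assembly preserves weak equivalences in the sense of \cref{def: preserves weak equivalences} because a morphism in $B^q\cM_n$ is a weak equivalence precisely when it is a quasi-isomorphism for each differential. (One may organise all of this by noting that $B\cM_n=C\cM_n\times_{G\cM_n}C\cM_n$ and $B^q\cM_n=C^q\cM_n\times_{G\cM_n}C^q\cM_n$ are strict fibre products over the category $G\cM_n$ of bounded, non-negatively graded objects of $\cM_n$, and that the constructions of \cref{prop: DoldPuppe Operations} descend compatibly to $G\cM_n$.)

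It remains to prove that the graded object underlying $V_1.\wedge\dots\wedge V_k.$, and likewise the one underlying $V.\otimes_\Delta W.$, depends only on the graded objects underlying the $V_i.$ (respectively $V.$ and $W.$) together with the graded admissible monomorphisms $V_1.\rightarrowtail\dots\rightarrowtail V_k.$, and not on the differentials. Recall from Dold--Kan that $(\Gamma V.)_m=\bigoplus_{[m]\twoheadrightarrow[p]}V_p$ depends only on the graded object underlying $V.$, and that the degeneracy maps of the simplicial object $\Gamma V.$ are induced purely by the combinatorics of $\Delta$ and involve no differential. Hence the degeneracy maps of $\Gamma V_1.\wedge\dots\wedge\Gamma V_k.$, being the images under the functorial power operation $F_k(\cM_n)\to\cM_{nk}$ of the degeneracies of $\Gamma V_1.\rightarrowtail\dots\rightarrowtail\Gamma V_k.$, involve no differential either, so the degenerate subobject of $\Gamma V_1.\wedge\dots\wedge\Gamma V_k.$ in each simplicial degree $m$ depends only on the stated graded data. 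One then identifies the normalised piece $(V_1.\wedge\dots\wedge V_k.)_m=N(\Gamma V_1.\wedge\dots\wedge\Gamma V_k.)_m$ with a differential-free complement of this degenerate subobject --- concretely, with the image of the power operation applied to the subsequence $(V_1)_m\rightarrowtail\dots\rightarrowtail(V_k)_m$ of $\mathrm{id}_{[m]}$-summands, which is contained in the normalised piece and is itself a complement of the degenerate subobject, hence (by idempotent completeness of $\cM_{nk}$) equals the normalised piece. The argument for $\Gamma V.\otimes\Gamma W.$ and $V.\otimes_\Delta W.$ is identical.

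I expect the main obstacle --- mild, but the only part requiring genuine work --- to be exactly this last identification, and in particular the verification that the two normalised graded objects coincide \emph{on the nose} rather than merely up to canonical isomorphism, so that one really does land in the category of binary complexes as defined. Once this is in place, the remainder of the proof is bookkeeping on two parallel copies of \cref{prop: DoldPuppe Operations} and \cref{Prop: DoldPuppeOperations II}.
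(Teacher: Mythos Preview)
Your approach is the paper's: apply \cref{prop: DoldPuppe Operations} and \cref{Prop: DoldPuppeOperations II} to each differential separately, the only substantive point being that the graded object underlying $V_1.\wedge\dots\wedge V_k.$ (and $V.\otimes_\Delta W.$) does not depend on the differentials. The paper dispatches this in one line by citing \cite[Lemma~4.2]{HKT17}; you attempt a direct argument, and you are right that this is the only place requiring work.

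Your direct argument, however, contains an error in the concrete identification. The claim that the image of the power operation applied to the $\mathrm{id}_{[m]}$-summands is a complement of the degenerate subobject is false already for the simplicial tensor product. In $(\Gamma V.)_m\otimes(\Gamma W.)_m=\bigoplus_{\sigma,\tau}V_p\otimes W_q$ (sum over surjections $\sigma\colon[m]\twoheadrightarrow[p]$ and $\tau\colon[m]\twoheadrightarrow[q]$), a summand lies in the degenerate subobject precisely when the pair $(\sigma,\tau)\colon[m]\to[p]\times[q]$ fails to be injective; the non-degenerate summands are therefore indexed by all $(p,q)$-shuffles with $p+q=m$, not just by $(\mathrm{id}_{[m]},\mathrm{id}_{[m]})$. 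Hence $(V.\otimes_\Delta W.)_m$ is strictly larger than $V_m\otimes W_m$ for $m\ge 1$, and the same phenomenon occurs for the power operations.

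Your strategy is nonetheless sound and easily repaired. You correctly observe that the degeneracy maps of $\Gamma V.$, and hence of $\Gamma V_1.\wedge\dots\wedge\Gamma V_k.$, do not involve the differential, so the degenerate subobject $D_m$ is differential-free. Simply drop the incorrect explicit complement and use that $N(-)_m$ is canonically the cokernel of the inclusion $D_m\hookrightarrow(\Gamma V_1.\wedge\dots\wedge\Gamma V_k.)_m$: this cokernel is literally the same object for both differentials, and both induced differentials descend to it, yielding the desired binary complex. This is essentially the content of \cite[Lemma~4.2]{HKT17}.
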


\begin{proof}
This immediately follows from \cref{prop: DoldPuppe Operations} and (the proof of) \cref{Prop: DoldPuppeOperations II}. Note that, for every binary complex $\VV$, the objects in $\bigwedge^k(\VV)$  are well-defined because they don't depend on the differentials $d, \tilde{d}$, as explained in \cite[Lemma~4.2]{HKT17}.
\end{proof}

Finally, we recall that by iterating the definitions of $C$ and $B^q$ above, we obtain the exact category $(B^q)^s \cM_n$ of $s$-dimensional acyclic binary multi-complexes in $\cM_n$ and the exact category $C (B^q)^s \cM_n$ of (bounded and non-negatively supported) complexes of objects in $(B^q)^s \cM_n$; see \cite[Definition~1.1(3) and Remark 1.2]{HKT17} for further explanations of these definitions.

\begin{cor}\label{cor: Dold-Puppe operations IV}
Assuming inductively in $s$ that, for every quasi-iso\-mor\-phism~$f.$ in every category $C(B^q)^s \cM_n$ and for every $k \ge 1$, also $\bigwedge^k(f.)$ is a quasi-isomorphism, we obtain, by iterating the construction above, an assembly of power operations on the categories $(B^q)^s \cM_n$, $n\ge 0$, for every~$s \ge 0$.
\end{cor}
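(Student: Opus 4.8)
The plan is a routine induction on $s$, at each step feeding the conclusion at level $s$ back into \cref{cor: Dold-Puppe operations III}. For $s=0$ there is nothing to do, since $(B^q)^0\cM_n = \cM_n$ and the assembly of power operations on $\cM_n$, $n\ge 0$, is the given one. Assume inductively that $(B^q)^s\cM_n$, $n\ge 0$, already carries an assembly of power operations. I would then apply \cref{cor: Dold-Puppe operations III} \emph{verbatim}, but with the input assembly $(\cM_n)_{n\ge 0}$ replaced throughout by $\bigl((B^q)^s\cM_n\bigr)_{n\ge 0}$: its first (unconditional) sentence produces an assembly on $B\bigl((B^q)^s\cM_n\bigr)$, $n\ge 0$, and its second sentence produces one on $B^q\bigl((B^q)^s\cM_n\bigr) = (B^q)^{s+1}\cM_n$, $n\ge 0$, which is exactly the assertion at level $s+1$.

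For this to be legitimate I must check that the running hypotheses of \cref{cor: Dold-Puppe operations III} (equivalently, those of \cref{prop: DoldPuppe Operations} and \cref{Prop: DoldPuppeOperations II}) are met by the categories $(B^q)^s\cM_n$. The first is idempotent completeness, which is what makes the Dold--Kan correspondence, and hence the entire Dold--Puppe construction underlying these results, available; I would prove it by a parallel induction, using that if an exact category $\cN$ is idempotent complete then so are $C\cN$, $C^q\cN$, $B\cN$ and $B^q\cN$ --- the only non-formal point being that a direct summand of an acyclic bounded complex over an idempotent complete exact category is again acyclic, which belongs to the long-exact-sequence formalism available there (cf.\ \cite[A.9.2]{ThomTro}) and is part of the circle of facts already invoked in the proof of \cref{prop: DoldPuppe Operations}. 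Granted idempotent completeness, the relevant subcategory of weak equivalences on $C(B^q)^s\cM_n$ is canonically that of quasi-isomorphisms, so no extra choice is needed. The second hypothesis is the one of \cref{Prop: DoldPuppeOperations II} in the form it takes after replacing $\cM_n$ by $(B^q)^s\cM_n$, namely that $\bigwedge^k(f.)$ be a quasi-isomorphism for every quasi-isomorphism $f.$ in $C(B^q)^s\cM_n$ and every $k\ge 1$ --- and this is precisely what \cref{cor: Dold-Puppe operations IV} assumes inductively in $s$. With both points in hand, \cref{cor: Dold-Puppe operations III} delivers the assembly at level $s+1$, and the induction closes.

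There is no genuine obstacle beyond bookkeeping. The one thing to be careful about is that the step from level $s$ to level $s+1$ factors through the intermediate category $C(B^q)^s\cM_n$ --- on which the Dold--Puppe assembly of \cref{prop: DoldPuppe Operations} lives and for which the quasi-isomorphism condition must be imposed --- which is why that category, rather than $(B^q)^{s+1}\cM_n$ itself, appears in the statement. All naturality and associativity requirements of \cref{def: power operations}, together with the axioms (E1)--(E5), are then inherited stage by stage exactly as in the proofs of \cref{prop: DoldPuppe Operations} and \cref{Prop: DoldPuppeOperations II}, since the structure maps at level $s+1$ are produced by applying the normalized chain complex functor $N$ to the levelwise structure maps of level $s$. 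The genuinely substantive input of the whole package --- that exterior powers of quasi-isomorphisms are again quasi-isomorphisms --- is not proved by this corollary but only assumed; for locally free modules on a quasi-compact scheme it will be established in \cref{prop: exterior powers of quasi-isom}.
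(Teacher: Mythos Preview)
Your proposal is correct and follows essentially the same inductive strategy as the paper: both argue by induction on $s$, invoke \cref{cor: Dold-Puppe operations III} with $(B^q)^s\cM_n$ in place of $\cM_n$, and verify idempotent completeness so that the Dold--Puppe machinery applies at each stage. The only cosmetic difference is that the paper dispatches idempotent completeness of $(B^q)^s\cM_n$ by citing \cite[Lemma~3.4(1)]{HKT17}, whereas you sketch a direct argument via the long-exact-sequence formalism; either route works.
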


\begin{proof}
The statement for $s=0$ just means that we are given an assembly of power operations on $\cM_n$, $n \ge 0$, as assumed. From \cref{cor: Dold-Puppe operations III} we obtain an assembly of power operations on the categories $B^q\cM_n$, $n \ge 0$, proving the statement for $s=1$. Furthermore, by viewing binary complexes as pairs of ordinary complexes, it follows from \cite[Lemma~3.4(1)]{HKT17} that these categories are idempotent complete again. Hence, the assumption for $s=1$ makes sense and induction on~$s$ proves \cref{cor: Dold-Puppe operations IV}.
\end{proof}

Lastly, let $X$ be a quasi-compact scheme and let $\cP(X)$ or just $\cP$ denote the category of locally free $\cO_X$-modules of finite rank. We consider the standard assembly of power operations on $\cP$ given by the usual tensor product and exterior power operations, as explained after \cref{def: power operations}. When $s=0$ the following proposition means that quasi-isomorpisms between complexes in $C \cP$ are preserved by exterior powers.

\begin{prop}\label{prop: exterior powers of quasi-isom}
The assumption of \cref{cor: Dold-Puppe operations IV} is satisfied for the standard assembly of power operations on $\cP$.
\end{prop}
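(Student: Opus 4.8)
The plan is to reduce to the case of an affine scheme and then induct on $s$; the only substantive step is the base case $s=0$, while the inductive step is a bookkeeping argument patterned on the proof of \cref{Prop: DoldPuppeOperations II}. Being a quasi-isomorphism is a local condition, and the formation of exterior powers — as well as the functors $\Gamma$, $N$ and the simplicial tensor product $\otimes_\Delta$, all of which are built from finite (co)limits and the tensor product — commutes with restriction to open subschemes; since $X$ is quasi-compact, it therefore suffices to treat the case $X=\operatorname{Spec} R$, which I assume from now on.

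For $s=0$ we must show that $\bigwedge^k$ carries a quasi-isomorphism $f\colon V_\bullet\to W_\bullet$ in $C\cP$ to a quasi-isomorphism. The essential input is the classical fact that a quasi-isomorphism between non-negatively supported complexes of finitely generated projective $R$-modules is a chain homotopy equivalence: its mapping cone is an acyclic, non-negatively supported complex of projectives, and such a complex is contractible (construct a contracting homotopy by induction on the degree, using projectivity to split the relevant admissible epimorphisms), so $f$ admits a homotopy inverse. Under the Dold-Kan correspondence $\Gamma$ turns chain homotopies into simplicial homotopies, so $\Gamma f$ is a simplicial homotopy equivalence of simplicial objects in $\cP$; applying the functor $\bigwedge^k\colon\cP\to\cP$ degreewise preserves simplicial homotopies, since their defining identities are diagrammatic and hence preserved by any functor, so $\bigwedge^k(\Gamma f)$ is again a simplicial homotopy equivalence; and applying $N$ then shows that $\bigwedge^k(f)=N(\bigwedge^k\Gamma f)$ is a chain homotopy equivalence, in particular a quasi-isomorphism. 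The analogous, easier, statement for $\otimes_\Delta$ follows either the same way or, more directly, from the Eilenberg-Zilber theorem, just as at the start of the proof of \cref{Prop: DoldPuppeOperations II}.

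For the inductive step, assume that for some $s\ge 1$ both $\bigwedge^k$ and $\otimes_\Delta$ preserve quasi-isomorphisms in $C(B^q)^{s-1}\cP$ for all $k$; by \cref{prop: DoldPuppe Operations}, \cref{Prop: DoldPuppeOperations II} and \cref{cor: Dold-Puppe operations III} this is precisely what is needed for the assembly on $(B^q)^s\cP$, and hence on $C(B^q)^s\cP$, to be defined. Write $\cN:=(B^q)^{s-1}\cP$, so that $(B^q)^s\cP=B^q\cN$. Since the exterior powers on $B^q\cN$ are obtained by applying the Dold-Puppe construction to each of the two differentials separately, the inductive hypothesis shows that a morphism in $B\cN$ which is a quasi-isomorphism with respect to one of its differentials is sent by $\bigwedge^k$ to a morphism that is a quasi-isomorphism with respect to the corresponding differential. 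As the conflations of $B^q\cN$ are exactly the conflations of $\cN$ in each binary degree, a morphism $f$ in $CB^q\cN$ is a quasi-isomorphism if and only if, for every binary degree $j$, the induced morphism $f_j$ of complexes in $\cN$ (taken with the outer differential) is a quasi-isomorphism in $C\cN$. Iterating axiom (E5), exactly as with the $k-1$ short exact sequences in the proof of \cref{Prop: DoldPuppeOperations II} and using the Eilenberg-Zilber theorem for the tensor factors, one checks that $\bigwedge^k(f)_j$ is, up to natural quasi-isomorphism, a finite iteration of the operations $\bigwedge^{k'}$ and $\otimes_\Delta$ on $C\cN$ applied to the morphisms $f_{j'}$ with $j'\le j$; the inductive hypothesis then gives that $\bigwedge^k(f)_j$ is a quasi-isomorphism in $C\cN$ for every $j$, so $\bigwedge^k(f)$ is a quasi-isomorphism in $CB^q\cN=C(B^q)^s\cP$. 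The same argument handles $\otimes_\Delta$ at level $s$, which closes the induction; the statements obtained are exactly the hypothesis of \cref{cor: Dold-Puppe operations IV}.

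The conceptual heart of the proof is the base case, which rests on quasi-isomorphisms of non-negatively supported complexes of projectives being homotopy equivalences; this genuinely fails on non-affine schemes (already on $\mathbb{P}^1$ there are non-split short exact sequences of locally free sheaves), so the reduction to the affine case is indispensable rather than cosmetic. I expect the main technical nuisance to lie in the inductive step: verifying that a quasi-isomorphism in the iterated category $C(B^q)^s\cP$ is detected in each binary degree, and keeping careful track of how the exterior power operation on $(B^q)^s\cP$ is assembled from those on $(B^q)^{s-1}\cP$ so as to see that it interacts with this grading through the filtrations supplied by axioms (E1)--(E5).
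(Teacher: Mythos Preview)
Your base case $s=0$ is correct and is essentially the paper's core argument: locally on affines, a quasi-isomorphism between bounded-below complexes of projectives is a homotopy equivalence, and homotopy equivalences survive any functor via the Dold--Kan correspondence (this is \cite[Proposition~3.1(2)]{HKT17}). The paper, however, does \emph{not} induct on~$s$. Instead it observes---after first reducing $B^q$ to $C^q$ by viewing a binary complex as a pair of ordinary complexes---that the \emph{same} homotopy-equivalence argument works for every $s$ at once: by \cite[Exercise~2.2.1]{WeiHom} and \cite[Lemma~3.4(2)]{HKT17}, the objects of $(C^q)^s\cP(U)$ are projective in the abelian category $C^s\cA$ of $s$-fold complexes of coherent modules, so every quasi-isomorphism in $C(C^q)^s\cP(U)$ is already a homotopy equivalence. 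This bypasses your inductive step entirely.

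Your inductive step, as written, has genuine gaps. The ``if and only if'' claim that a quasi-isomorphism in $CB^q\cN$ is detected at each binary degree is not a formal consequence of conflations being degreewise: the nontrivial direction needs the cycle objects $Z_i=\ker(d_i)$ of the cone to lie in $B^q\cN$, i.e.\ to be \emph{acyclic} binary complexes, and that is not automatic---it requires exactly the kind of projectivity input the paper invokes (one must argue inductively along the outer grading, using long exact sequences in an ambient abelian category). More seriously, the assertion that $(\bigwedge^k f)_j$ is ``up to natural quasi-isomorphism a finite iteration of $\bigwedge^{k'}$ and $\otimes_\Delta$ on $C\cN$ applied to the $f_{j'}$'' does not follow from axiom~(E5): the filtration in the proof of \cref{Prop: DoldPuppeOperations II} runs over the members of a sequence $V_1\rightarrowtail\cdots\rightarrowtail V_k$, not over the binary grading, and the nested Dold--Kan functors in the inner and outer directions do not interact in the way your formula suggests. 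The paper's uniform projectivity argument is both shorter and sidesteps these difficulties; your own closing caveat correctly anticipates that this is where the real work lies.
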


\begin{proof}
By regarding a binary complex as a pair of two ordinary complexes, it suffices to prove this for $B^q$ replaced with $C^q$. For every open subset~$U$ of $X$, we consider the exact category $\cP(U)$ embedded in the abelian category $\cA(U)$ of quasi-coherent sheaves on $U$, and then, for any $s$, the exact category $(C^q)^s\cP(U)$ embedded in the abelian category $C^s\cA(U)$. Then, in order to show that $\bigwedge^k(f.)$ is a quasi-isomorphism, it suffices to show it is so in $C^s\cA(X)$, and by sheaf-theoretic arguments it then suffices to show that $\bigwedge^k(f.)|_U =\bigwedge^k(f.|_U)$ is a quasi-isomorphisms in $C^s\cA(U)$ for every open affine subset $U$ of $X$. Since the restriction $f.|_U$ is a quasi-isomorphism as well, we may therefore assume that $X$ is affine.\\
Then, every object in $\cP(X)$ is projective in $\cA(X)$ and, by \cite[Exercise~2.2.1]{WeiHom} and \cite[Lemma~3.4(2)]{HKT17},
every object in $(C^q)^s \cP(X)$ is projective in $C^s \cA(X)$. The dual of \cite[Lem\-ma~10.4.6]{WeiHom} now implies that $f.$ is a split surjection in the homotopy category and that the same holds for its (a priori) one-sided inverse. Hence, $f.$ is in fact a homotopy equivalence. (This reasoning is taken from the end of the proof of \cite[Theorem~10.4.8]{WeiHom}). Similarly to the proof of \cite[Proposition~3.1(2)]{HKT17}, then also $\bigwedge^k(f.)$ is a homotopy equivalence since the defining relations of a simplicial homotopy (see \cite[Definition 5.1]{May}) are preserved by any functor, in particular by~$\bigwedge^k$.
\end{proof}

\begin{rem}
As explained in \cite[Example~3.2]{HKT17}, it is not true in general that exterior powers of acyclic complexes of {\em coherent} modules are acyclic again. Similarly, \cref{prop: exterior powers of quasi-isom} may not be true when $B^q$ is replaced with $B$ or $C$.
\end{rem}

\section{Operations on \texorpdfstring{$K$}{K}-Theory Spaces}\label{sec: operations on spaces}
In this section we explain how the techniques of \cite{GraysonExterior}, \cite{GSVW} and \cite{GunnSchw} can be used to turn an assembly of power operations on exact categories with weak equivalences into operations on $K$-theory spaces and $K$-groups. In particular, throughout this section, we assume we are given an assembly of power operations as in \cref{def: power operations}. Later on, we furthermore assume that it preserves given weak equivalences in the sense of \cref{def: preserves weak equivalences}.

\medskip

(When recalling various definitions over the next page or so, we will do so only to the extent they are needed to be able to follow the rest of this section and paper; the remaining details, required for  full definitions, are highly combinatorial and lengthy, and will rather be referenced.)
For each~$n$, let $g\cM_n$ denote the $G$-construction applied to the exact category~$\cM_n$ (denoted~$G\cM_n$ in \cite{GG} and \cite{GraysonExterior}). Recall $g\cM_n$ is a simplicial set where $g\cM_n (A)$ is of the form
\[g\cM_n(A) = \mathrm{Exact}(\Gamma(A), \cM_n)\]
for each $A$ in the category $\Delta$, the category of non-empty finite totally ordered sets. Here, $\Gamma(A)$ is the category of all arrows $j \rightarrow i$ in the partially ordered set $A \sqcup \{L,R\}$ such that $i \in A$ and where $L$ and $R$ are non-comparable elements formally added to the ordered set $A$ so that $L <a$ and $R <a$ for every $a \in A$; furthermore, for any $k \le j \le i \in A \sqcup \{L,R\}$ such that $i, j \in A$, the sequence $(k\rightarrow j) \rightarrow (k \rightarrow i) \rightarrow (j\rightarrow i)$ in $A \sqcup \{L,R\}$ is declared to be exact; then, the set $\mathrm{Exact}(\Gamma(A), \cM_n)$ of exact functors from $\Gamma(A)$ to $\cM_n$ has a well-defined and obvious meaning; for more precise details, see \cite[Section~3]{GraysonExterior}. In this paper however, similarly to the viewpoint taken in \cite{GSVW} and \cite[Section~1]{GunnSchw}, we consider $g\cM_n$ not only as a simplicial set but most of the time actually as a simplicial category and then write $G\cM_n$ for it; in fact, we temporarily view $G\cM_n$ as a simplicial object in the category $\cE x$ of exact categories (where the morphisms are {\em exact} functors). All the additional structure is of course derived from the corresponding given structure on $\cM_n$. For example, a sequence
\[M' \rightarrow M \rightarrow M'' \quad \textrm{ in } \quad \mathrm{Exact}(\Gamma(A), \cM_n)\]
is said to be a short exact sequence if $M'(x) \rightarrowtail M(x) \twoheadrightarrow M''(x)$ is a short exact sequence in $\cM_n$ for all $x \in \Gamma(A)$. This slightly more sophisticated viewpoint is already convenient when defining the $k$-fold iterated $G$-construction $G^k \cM_n$. More precisely, the $k$-simplicial object $G^k \cM_n$ in $\cE x$ is given by
\begin{eqnarray*}
\lefteqn{(G^k \cM_n)(A_1, \ldots, A_n)}\\
& := &\mathrm{Exact}(\Gamma (A_n), \mathrm{Exact}(\Gamma (A_{n-1}), \ldots, \mathrm{Exact}(\Gamma (A_1), \cM_n) \ldots))\\
&= & \mathrm{Exact}(\Gamma(A_1) \times \ldots \times \Gamma(A_n), \cM_n)
\end{eqnarray*}
for $A_1, \ldots, A_n \in \Delta$; the last ``Exact'' here means ``multi-exact'', similarly to ``multi-linear'' in Linear Algebra. We recall furthermore from \cite[Section~4]{GraysonExterior} that composing with concatenation
\[\Delta^k \rightarrow \Delta, \quad (A_1, \ldots, A_k) \mapsto A_1 \cdots A_k,\]
turns every simplicial object $X$ into a $k$-simplicial object $\mathrm{Sub}_k X$, called the \emph{$k$-fold edgewise subdivision of $X$}.
Finally, for each $k$ and $A \in \Delta$, Grayson \cite[Sections~5,~6]{GraysonExterior} defines the set $\Gamma^k(A)$ consisting of (formal) tuples $(x_1, \ast_2, x_2, \ast_3, \ldots, \ast_k, x_k)$ where $x_r \in \Gamma(A)$ and $\ast_r \in \{\otimes, \wedge\}$ for each $r$ and which satisfy certain conditions. He furthermore equips $\Gamma^k(A)$ with a certain partial order and a notion of short exact sequences and defines a particular multi-exact map
\[\Xi\colon \Gamma(A_1) \times \ldots \times \Gamma(A_k) \rightarrow \Gamma^k(A_1 \cdots A_k)\]
for any $A_1, \ldots, A_k \in \Delta$.

\begin{defn}
For any $k \ge 1$, $n \ge 0$ and $A_1, \ldots, A_n \in \Delta$, let
\[\lambda^k(A_1, \ldots, A_k) \colon \mathrm{Sub}_k (G\cM_n)(A_1, \ldots, A_n) \rightarrow G^k\cM_{nk}(A_1, \ldots, A_k)\]
denote the composition of the functors
\begin{align*}
\mathrm{Exact}(\Gamma(A), \cM_n) &&&\rightarrow &&\mathrm{Exact}(\Gamma^k(A), \cM_{nk})\\
M &&&\mapsto &&\left((x_1, \ast_2, \ldots \ast_k, x_k) \mapsto  M(x_1) \ast_2 \ldots \ast_k M(x_k)\right)
\end{align*}
(where $A:= A_1 \cdots A_k$) and the functor
$\mathrm{Exact}(\Xi, \cM_{nk})$ between the categories $\mathrm{Exact}(\Gamma^k(A_1 \cdots A_k), \cM_{nk})$ and $\mathrm{Exact}(\Gamma(A_1)\times \ldots \times \Gamma(A_k), \cM_{nk})$.
\end{defn}

\begin{prop} \label{prop: lambda operation as k-simplicial map}
The previous definition yields a well-defined morphism
\[\lambda^k\colon \mathrm{Sub}_k (G\cM_n) \rightarrow G^k\cM_{nk}\]
in the category of $k$-simplicial categories.
\end{prop}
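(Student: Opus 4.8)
The plan is to run Grayson's construction from \cite[Sections~5 and~6]{GraysonExterior} essentially unchanged, the only genuinely new point being the observation that every step of it relies solely on the bi-functoriality of $\otimes$, the functoriality of the power operations $(V_1\rightarrowtail\cdots\rightarrowtail V_k)\mapsto V_1\wedge\cdots\wedge V_k$, and the axioms (E1)--(E5) of \cref{def: power operations}; in particular it is insensitive to the fact that in \emph{loc.\ cit.}\ all the $\cM_n$ coincide with a single $\cP(X)$ and the operations are exterior powers. Accordingly I would split the claim into two parts: (a) for each fixed $(A_1,\dots,A_k)\in\Delta^k$ the map $\lambda^k(A_1,\dots,A_k)$ is a well-defined exact functor, i.e.\ a morphism in $\cE x$; and (b) these functors are compatible with the structure maps of the $k$-simplicial objects $\mathrm{Sub}_k(G\cM_n)$ and $G^k\cM_{nk}$, so that they assemble to a morphism of $k$-simplicial objects in $\cE x$.

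For (a) I would write $A=A_1\cdots A_k$ and factor $\lambda^k(A_1,\dots,A_k)$, as in the definition, as the composite of the assignment
\[ M \longmapsto \bigl((x_1,\ast_2,\dots,\ast_k,x_k)\mapsto M(x_1)\ast_2\cdots\ast_k M(x_k)\bigr) \]
with the functor $\mathrm{Exact}(\Xi,\cM_{nk})$. The crux is to check that, for an exact functor $M\colon\Gamma(A)\to\cM_n$, this assignment produces an \emph{exact} functor $\Gamma^k(A)\to\cM_{nk}$. It is a functor because the partial order on $\Gamma^k(A)$ is generated by the order on $\Gamma(A)$ in each coordinate --- for which the required morphisms in $\cM_{nk}$ come from functoriality of $\otimes$ and of the wedge operations --- together with the relations that let a symbol $\ast_r$ pass from $\otimes$ to $\wedge$ --- for which the required morphisms are precisely those furnished by (E1) and (E2), their coherence being exactly (E3) and (E4). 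And it carries Grayson's chosen short exact sequences on $\Gamma^k(A)$ to short exact sequences in $\cM_{nk}$ precisely by axiom (E5), together with the bi-exactness of $\otimes$, the exactness of the wedge operations on sequences of admissible monomorphisms, and the exactness of $M$. On morphisms the assignment is defined through functoriality of $\otimes$ and of the wedge operations, using that a natural transformation $M\to M'$ induces, at each tuple, a morphism between the corresponding diagrams of admissible monomorphisms, hence a morphism in the relevant category $F_k(\cM_n)$. Since $\Xi$ is a multi-exact map of the relevant partially ordered sets equipped with short exact sequences, by its very construction in \emph{loc.\ cit.}, the functor $\mathrm{Exact}(\Xi,\cM_{nk})$ is well-defined and exact, and the composite $\lambda^k(A_1,\dots,A_k)$ is therefore a morphism in $\cE x$.

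For (b) I would note that the structure maps of $\mathrm{Sub}_k(G\cM_n)$ are induced, via concatenation, by the functoriality of $\Gamma$, and those of $G^k\cM_{nk}$ directly by the functoriality of $\Gamma$ in each variable. Naturality of $\lambda^k$ then reduces to two facts already established in \cite[Sections~5,~6]{GraysonExterior}: that $\Gamma^k$ is functorial on $\Delta$ and that the pointwise assignment of (a) is natural with respect to the maps between the various $\Gamma^k(A)$ induced by morphisms in $\Delta$; and that $\Xi$ is natural in $(A_1,\dots,A_k)\in\Delta^k$ with respect to $\Gamma$ on each factor and $\Gamma^k$ on the concatenation. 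As with (a), neither fact uses anything about the assembly beyond the functoriality of $\otimes$ and of the wedge operations, so both carry over verbatim. Combining (a) and (b) yields a morphism of $k$-simplicial objects in $\cE x$, which in particular is the asserted morphism of ($k$-)simplicial categories.

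The main obstacle will be entirely localized in (a): the verification that $(x_1,\ast_2,\dots,x_k)\mapsto M(x_1)\ast_2\cdots\ast_k M(x_k)$ lands in exact functors on $\Gamma^k(A)$. This is not really a new difficulty, since it is precisely what the axioms (E1)--(E5) were isolated for in \cite{GraysonExterior}; the only task is to confirm that Grayson's proof of it for the standard assembly invokes nothing beyond those axioms and the (bi-)functoriality of the operations.
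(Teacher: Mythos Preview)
Your overall plan---verify that each $\lambda^k(A_1,\dots,A_k)$ is a well-defined functor and then check simplicial naturality, citing Grayson's axiomatic argument---is exactly what the paper does by reference to \cite[Section~7]{GraysonExterior} and \cite[Theorem~4.1]{GunnSchw}. The substance of your parts (a) and (b) is correct and proves the proposition as stated.

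There is, however, a genuine overclaim that you should remove. You assert in (a) that $\lambda^k(A_1,\dots,A_k)$ is an \emph{exact} functor, i.e.\ a morphism in $\cE x$, and conclude in (b) that $\lambda^k$ is a morphism of $k$-simplicial objects in $\cE x$. This is false, and the paper's proof explicitly warns against it: ``$\lambda^k$ is indeed a morphism between simplicial categories (but not between simplicial objects in $\cE x$)''. The reason is immediate: for a short exact sequence $M'\rightarrowtail M\twoheadrightarrow M''$ in $\mathrm{Exact}(\Gamma(A),\cM_n)$ and a tuple with all $\ast_i=\wedge$, the resulting sequence $M'(x_1)\wedge\cdots\wedge M'(x_k)\to M(x_1)\wedge\cdots\wedge M(x_k)\to M''(x_1)\wedge\cdots\wedge M''(x_k)$ is \emph{not} short exact in general (already $\bigwedge^2$ fails to preserve short exact sequences). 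Your argument never actually addresses this point---you establish functoriality on morphisms via the naturality of $\otimes$ and the wedge operations, which is correct and is all the proposition requires, but you then silently upgrade ``functor'' to ``exact functor'' without justification. Drop the exactness claim; what remains is a correct proof that $\lambda^k$ is a morphism of $k$-simplicial categories, which is precisely the content of the proposition.
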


\begin{proof}
In \cite[Section~7]{GraysonExterior}, this morphism is shown to be well-defined in the category of $k$-simplicial sets. The slightly stronger statement of this proposition then immediately follows from the functoriality of the morphisms assumed in (E1, E2) of \cref{def: power operations}.
This stronger statement is actually also stated in a more general context in \cite[Theorem~4.1]{GunnSchw}, but the type of simplicial objects considered there is not clearly specified: beware this is not a map between $k$-simplicial objects in $\cE x$.
\end{proof}

The object of the rest of this section is to pass from the operation $\lambda^k$ defined above to a map between $K$-theory spaces and finally between $K$-groups. To this end, we in addition assume from now on that every category $\cM_n$ is equipped with a subcategory $w\cM_n$ of weak equivalences and that the given assembly of power operations preserves weak equivalences.

\medskip

Similarly to the definition of $wF_k(\cM_n)$ in \cref{Sec: Operations}, let the $k$-simplicial category $wG^k\cM_n$ be defined by the requirement that $wG^k\cM_n(A_1, \ldots, A_k)$ is the subcategory of $G^k\cM_n(A_1, \ldots, A_k)$ whose morphisms levelwise belong to $w\cM_n$ (for any $A_1, \ldots, A_k \in \Delta$).

It is easy to see that, under the additional assumption above, the operation $\lambda^k \colon \textrm{Sub}_kG\cM_n \rightarrow G^k \cM_n$ from \cref{prop: lambda operation as k-simplicial map} induces a well-defined morphism
\begin{equation}\label{Eq: morphism w lambda}
w\lambda^k \colon \textrm{Sub}_k wG\cM_n \rightarrow wG^k \cM_{nk}
\end{equation}
between $k$-simplicial categories (for every $k$). Let $N$ denote the functor from the category of categories to the category of simplicial sets which maps a category to its nerve. By composing with $N$ we then obtain a map
\[N \circ \textrm{Sub}_k wG\cM_n \rightarrow N \circ w G^k\cM_{nk}\]
between $k$-simplicial objects in the category of simplicial sets, i.~e., a map between $(k+1)$-simplicial sets. Let
\[|w \lambda^k| \colon |\textrm{Sub}_k wG \cM_n| \rightarrow | wG^k\cM_{nk} | \]
denote the continuous map obtained from the previous map by passing to geometric realisations, see \cite[Section~2]{Petric} for details.

The following theorem will finally allow us to define the operation $\lambda^k$ on $K$-theory spaces and $K$-groups. Note that, for the construction of $|wG^k \cM_n|$ and $|\textrm{Sub}_k w G\cM_n|$, the exact category $\cM_n$ and its subcategory $w\cM_n$ of weak equivalences need not be part of an assembly of power operations.

\begin{thm} \label{Thm: Homotopy Equivalences}
Let $\cM$ be an exact category equipped with a subcategory $w\cM$ of weak equivalences. Then: \\
(a) We have a natural homeomorphism $ |wG \cM|\rightarrow |\mathrm{Sub}_k wG\cM|$.\\
(b) If $w\cM$ satisfies the extension axiom (see \cite[Section~1.2]{Wald}), we have a natural homotopy equivalence $|wG\cM| \rightarrow |wG^k\cM|$.
\end{thm}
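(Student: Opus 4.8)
The plan is to treat parts~(a) and~(b) separately: part~(a) is a formal consequence of the edgewise subdivision homeomorphism and needs no hypothesis on $w\cM$, whereas part~(b) is the substantive statement, in which the extension axiom plays an essential role. For part~(a), recall that $\mathrm{Sub}_k$ is the $k$-fold edgewise subdivision coming from concatenation $\Delta^k\to\Delta$, see \cite[Section~4]{GraysonExterior}, and that for every simplicial set $X$ there is a natural homeomorphism $|\mathrm{Sub}_k X|\cong|X|$; here the realisation of the $k$-simplicial set $\mathrm{Sub}_k X$ is, by definition, that of its diagonal $A\mapsto X(A\cdots A)$. I would first recall this homeomorphism and its naturality, and then observe that $\mathrm{Sub}_k$ acts only on the outer simplicial coordinate of the simplicial category $wG\cM$ and commutes with the nerve functor $\cN$, so that $\cN\circ\mathrm{Sub}_k wG\cM$ is the edgewise subdivision, in one of its two simplicial coordinates, of the bisimplicial set $\cN\circ wG\cM$. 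Applying the homeomorphism in that coordinate, together with the fact that the geometric realisation of a multisimplicial set agrees with that of its diagonal (see \cite[Section~2]{Petric}), yields the asserted natural homeomorphism $|\mathrm{Sub}_k wG\cM|\to|wG\cM|$. This step is essentially bookkeeping.

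For part~(b), I would proceed in two stages. First, when $w\cM$ is the subcategory of all isomorphisms, the statement is essentially Grayson's analysis of the iterated $G$-construction in \cite{GraysonExterior}, building on \cite{GG}: a natural map $|G\cM|\to|G^k\cM|$ — which one may produce by combining the homeomorphism of part~(a) with the map $|\mathrm{Sub}_k G\cM|\to|G^k\cM|$ induced by the canonical comparison between the partially ordered sets $\Gamma(A_1\cdots A_k)$ and $\Gamma(A_1)\times\dots\times\Gamma(A_k)$ — is a homotopy equivalence. Second, to upgrade this to an arbitrary subcategory $w\cM$ of weak equivalences satisfying the extension axiom, I would re-run that argument inside the framework of categories with weak equivalences, replacing each appeal to exactness and gluing for the isomorphism category by the corresponding statement for $w\cM$; the additional input is the additivity theorem and the attendant homotopy-cartesianness properties of the $wG$-construction, for which one invokes the tools of \cite{Wald}, \cite{GSVW} and \cite{GunnSchw}. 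It is precisely here that the extension axiom is needed, and the fact that isomorphisms satisfy it automatically explains why the hypothesis does not surface in \emph{loc.\ cit.} Realising and invoking part~(a) then delivers the stated natural homotopy equivalence $|wG\cM|\to|wG^k\cM|$.

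The main obstacle is the second stage of part~(b): one has to check, step by step, that Grayson's homotopy-theoretic treatment of the iterated $G$-construction survives the passage from isomorphisms to a general class of weak equivalences, and that the extension axiom does supply exactly the additivity and homotopy-cartesianness inputs these steps need. By contrast, part~(a) and the ``isomorphisms'' case of part~(b) are comparatively routine once the relevant results of \cite{GraysonExterior}, \cite{GG}, \cite{Wald}, \cite{GSVW} and \cite{GunnSchw} have been assembled.
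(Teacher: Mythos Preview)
Your treatment of part~(a) is essentially the same as the paper's: both reduce to the edgewise subdivision homeomorphism $|\mathrm{Sub}_k X|\cong|X|$ for simplicial sets from \cite[Section~4]{GraysonExterior}, extended to the bisimplicial (or simplicial-category) setting by applying it in one simplicial direction and noting that realisation does not depend on the order of directions.

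For part~(b), your plan diverges from the paper in two respects, and the first of these is a genuine gap. You propose to obtain the map $|wG\cM|\to|wG^k\cM|$ by composing the homeomorphism of part~(a) with a map $|\mathrm{Sub}_k wG\cM|\to|wG^k\cM|$ coming from a ``canonical comparison between the partially ordered sets $\Gamma(A_1\cdots A_k)$ and $\Gamma(A_1)\times\dots\times\Gamma(A_k)$''. No such comparison is on offer: Grayson's map $\Xi$ lands in $\Gamma^k(A_1\cdots A_k)$, not $\Gamma(A_1\cdots A_k)$, and there is no evident exact map $\Gamma(A_1)\times\dots\times\Gamma(A_k)\to\Gamma(A_1\cdots A_k)$ inducing the functor you want on $\mathrm{Exact}(-,\cM)$. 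The paper instead builds the map by a \emph{corner inclusion}: the functor $\cM\to\cM^{2^{k-1}}\cong G^{k-1}\cM([0],\dots,[0])$, $V\mapsto(V,0,\dots,0)$, extends (via \cite[\href{https://kerodon.net/tag/00FV}{Proposition~1.1.4.5}]{kerodon}) to a morphism $\cM\to G^{k-1}\cM$ of $(k{-}1)$-simplicial categories with $\cM$ constant, and applying $wG$ and realising produces $|wG\cM|\to|wG^k\cM|$.

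Second, your two-stage strategy (first the isomorphism case via \cite{GraysonExterior,GG}, then ``re-run'' the argument for general $w\cM$ using additivity and homotopy-cartesianness) is more laborious than necessary. The paper proceeds directly: for $k=2$, the realisation of the corner-inclusion map is shown to be a homotopy equivalence in part~(a) of the proof of \cite[Theorem~2.6]{GSVW}, which already works for exact categories with weak equivalences satisfying the extension axiom; the general~$k$ case follows by the same argument. So the hard work you anticipate (checking that Grayson's steps survive the passage to weak equivalences) has already been carried out in \cite{GSVW} and can simply be cited.
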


\begin{proof} $ $ \newline
(a) \cite[Section~4]{GraysonExterior} provides a natural homeomorphism $ |X|\rightarrow |\mathrm{Sub}_k X|$ for any simplicial set $X$. It is easy to verify that this map remains a homeomorphism if $X$ is a simplicial topological space. We therefore obtain the desired homeomorphism $ |wG \cM|\rightarrow |\mathrm{Sub}_k wG\cM|$ by applying this to the simplicial topological space which maps $[k]$ to the classifying space $BwG^k\cM$ of the category $wG^k\cM$. Note that realising a multisimplicial set does not depend on the order of simplicial directions. \\
(b) We begin with $k=2$. By \cite[\href{https://kerodon.net/tag/00FV}{Tag 00FV}]{kerodon}, the functor $\cM\rightarrow \cM\times\cM\cong G\cM([0])$ given by  $V\mapsto (V,0)$ induces a morphism of simplicial categories $\cM\rightarrow G\cM$ where we view the left-hand side as a constant simplicial category. We apply $wG$ and denote the resulting morphism by $\varphi \colon wG\cM\rightarrow wG^2\cM$. Its realisation is shown to be a homotopy equivalence in part (a) of the proof of \cite[Theorem~2.6]{GSVW}. Note that the $G$-construction there when applied to an exact category with weak equivalences is indeed the same as the construction described in this section.

This proof can be generalised to $k\ge 2$ as follows. Arguing as in \cite[\href{https://kerodon.net/tag/00FV}{Tag 00FV}]{kerodon},
the functor
\[\cM\rightarrow \cM^{2^{k-1}}\cong G^{k-1}\cM(([0], \ldots, [0])), \quad V \mapsto (V,0,...,0)\]
induces a morphism of ($k-1$)-simplicial categories $\tau \colon \cM \rightarrow G^{k-1}\cM$ where we view $\cM$ as a constant ($k-1$)-simplicial category. More precisely, for any $A_1,...,A_{k-1}\in\Delta$, the functor $\tau((A_1, \ldots, A_{k-1}))$ is the composition of the functor $V \mapsto (V,0,...,0)$ with the functor $\cM^{2^{k-1}}\rightarrow G^{k-1}\cM(A_1,...,A_{k-1})$
associated by $G^{k-1}\cM$ to the unique map $(A_1,...,A_{k-1})\rightarrow ([0], \ldots, [0])$ in~$\Delta^{k-1}$. Finally, we apply $wG$, pass to realisations and show as in \cite{GSVW} that the resulting map is a homotopy equivalence.
\end{proof}

Using the previous theorem, we obtain the sequence of continuous maps
\begin{equation}\label{Eq: morphism lambda}
|wG\cM_n| \xrightarrow{\cong} |\mathrm{Sub}_k wG\cM_n| \xrightarrow{|w\lambda^k|} |wG^k \cM_{nk}| \xleftarrow{\simeq} |wG\cM_{nk}|
\end{equation}
(for any $n \ge 0$ and $k\ge 1$) whose composition, after inverting the right-hand homotopy equivalence, we view as a well-defined morphism from $|wG\cM_n|$ to $|wG\cM_{nk}|$ in the (naive) homotopy category of topological spaces, and which in particular then does not depend on the choice of a particular continuous map representing the right-hand inverse. By abuse of notation, we denote this composition~$\lambda^k$ again.

\medskip

Recall, by composing with the forgetful functor (forgetting the morphisms) from the category of categories to the category of sets, we can associate a $k$-simplicial set $x$ with any $k$-simplicial category $X$. In other words, $x$ is the $k$-simplicial set of $0$-simplices of the simplicial object $N\circ X$. By \cite[\href{https://kerodon.net/tag/00FV}{Tag 00FV}]{kerodon}, we therefore have a morphism of ($k+1$)-simplicial sets $x\rightarrow N\circ X$. Denoting the $k$-simplicial set corresponding to the $k$-simplicial category~$G^k\cM_n$ by $g^k\cM_n$, we hence get morphisms $g^k\cM_n\rightarrow N \circ wG^k\cM_n$ (note that for any multi-index $A_1, \ldots, A_k \in \Delta$, the categories $wG^k\cM_n(A_1, \ldots A_k)$ and $G^k\cM_n(A_1, \ldots, A_k)$ have the same objects). Therefore we get maps $|g^k \cM_n | \rightarrow |wG^k\cM_n|$ and, similarly, maps $|\mathrm{Sub}_k g\cM_n| \rightarrow |\mathrm{Sub}_k wG\cM_n|$.

\begin{prop}\label{Prop: Step 0}
For any $n\ge 0$ and $k\ge 1$, we have the commutative diagram
\begin{equation}\label{Eq: comparing g and G}
\xymatrix{|g\cM_n| \ar[d] \ar[r]& |\mathrm{Sub}_k g\cM_n| \ar[r]^{|\lambda^k|} \ar[d] & |g^k\cM_{nk}| \ar[d] & |g\cM_{nk}| \ar[l] \ar[d]\\
|wG\cM_n| \ar[r] & |\mathrm{Sub}_k wG\cM_n| \ar[r]^{|w\lambda^k|} & |wG^k\cM_{nk}| & |wG\cM_{nk}| \ar[l]}
\end{equation}
where the spaces and the maps in the top line are the ones defined in \cite[Sections~4-7]{GraysonExterior} and the bottom line is sequence \eqref{Eq: morphism lambda}. If moreover $w\cM_n$, for every $n$, is the category of weak equivalences consisting only of isomorphisms (also denoted $i\cM_n$), then all vertical maps are homotopy equivalences.
\end{prop}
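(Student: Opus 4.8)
The plan is to treat the two assertions in turn. Commutativity of~\eqref{Eq: comparing g and G} is formal: all four vertical maps are instances of the natural map $x\to\cN\circ X$ attached to a (multi-)simplicial category $X$, so it is enough to check the three squares. The left square commutes because Grayson's edgewise subdivision homeomorphism (\cite[Section~4]{GraysonExterior}) is natural in the underlying simplicial space and is applied in exactly the same way on both rows. The right square commutes because the map $|wG\tau|$ of \cref{Thm: Homotopy Equivalences}(b) restricts, on the underlying simplicial sets of objects, to Grayson's comparison map $|g\cM_{nk}|\to|g^k\cM_{nk}|$, both being induced by $V\mapsto(V,0,\ldots,0)$ together with the same structure functors. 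The middle square commutes because, as recorded in the proof of \cref{prop: lambda operation as k-simplicial map}, the map $w\lambda^k$ of $k$-simplicial categories restricts on the underlying $k$-simplicial sets of objects to Grayson's $\lambda^k$; hence the middle square is the image under $|\cN(-)|$ of a square of maps of $k$-simplicial sets that commutes by naturality of $x\to\cN\circ X$.

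For the second assertion, suppose $w\cM_n=i\cM_n$ for every $n$. First, $i\cM$ satisfies the extension axiom of~\cite[Section~1.2]{Wald} (a morphism of short exact sequences in $\cM$ that is an isomorphism on the sub- and on the quotient objects is an isomorphism), so \cref{Thm: Homotopy Equivalences}(b) is available. I claim it suffices to prove that the canonical map $|g\cM|\to|iG\cM|$ of~\cite[Proposition~1.1.4.5]{kerodon} is a homotopy equivalence for every exact category $\cM$. Indeed, the two outer vertical maps in~\eqref{Eq: comparing g and G} are instances of this statement; the vertical map over $\mathrm{Sub}_k$ is identified with the one over $\cM_n$ by the subdivision homeomorphisms of \cref{Thm: Homotopy Equivalences}(a) and~\cite[Section~4]{GraysonExterior}, which are natural, hence compatible with $x\to\cN\circ X$; and the vertical map $|g^k\cM_{nk}|\to|iG^k\cM_{nk}|$ is a homotopy equivalence by the commutativity of the right square, since its horizontal neighbours $|g\cM_{nk}|\to|g^k\cM_{nk}|$ and $|wG\cM_{nk}|\to|wG^k\cM_{nk}|$ are homotopy equivalences (by~\cite{GraysonExterior} and \cref{Thm: Homotopy Equivalences}(b), respectively) and the left vertical map is one.

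It remains to prove that $|g\cM|\to|iG\cM|$ is a homotopy equivalence; this is the only step that is not purely formal, and the one I expect to be the main obstacle. I would analyse the bisimplicial set $\cN\circ iG\cM$ in the nerve direction. For $p\ge 0$ let $J_p$ denote the codiscrete groupoid on $\{0,\ldots,p\}$ (a unique morphism between any two objects), so that $[p]\mapsto J_p$ is cosimplicial with $J_0$ the terminal category. A $p$-simplex of the nerve of $i\,\mathrm{Exact}(\Gamma(A),\cM)$ is a chain of $p$ composable isomorphisms, and such a chain extends uniquely along $[p]\hookrightarrow J_p$; this identifies the simplicial set $A\mapsto\cN_p(iG\cM(A))$ with the $G$-construction $g(\mathrm{Fun}(J_p,\cM))$ of the functor category $\mathrm{Fun}(J_p,\cM)$ (an idempotent complete exact category under the levelwise exact structure), naturally in $A$ and in $p$. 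Since $J_p$ is contractible, the constant-functor embedding $\cM=\mathrm{Fun}(J_0,\cM)\to\mathrm{Fun}(J_p,\cM)$ is an exact equivalence, and the $G$-construction carries exact equivalences to homotopy equivalences (a standard fact, cf.\ \cite{GG}); hence the comparison map $g\cM\to\cN\circ iG\cM$ is a weak equivalence in each nerve degree, and therefore induces a weak equivalence on diagonals, that is, $|g\cM|\to|iG\cM|$ is a homotopy equivalence. Alternatively, as we are following the framework of~\cite{GSVW} and~\cite[Section~1]{GunnSchw}, the required comparison is also implicit there.
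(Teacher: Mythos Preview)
Your proof is correct and follows essentially the same overall structure as the paper's. The commutativity argument is handled identically in spirit (the paper simply says ``by construction'' and points to \cite[Lemma~6.3]{GG} and the proof of \cref{Thm: Homotopy Equivalences} for the right-hand square, which is what you spell out). The one genuine difference is in the second assertion: the paper invokes \cite[Lemma~2.14]{GSVW} directly to conclude that the outer vertical maps $|g\cM_n|\to|iG\cM_n|$ and $|g\cM_{nk}|\to|iG\cM_{nk}|$ are homotopy equivalences, and then deduces the two middle ones from \cref{Thm: Homotopy Equivalences} and its analogues in \cite{GraysonExterior}, exactly as you do. You instead supply a self-contained bisimplicial argument via the codiscrete groupoids $J_p$ and the identification $\cN_p(iG\cM)\cong g(\mathrm{Fun}(J_p,\cM))$; this is a valid and standard way to ``swallow isomorphisms'' and is essentially what underlies \cite[Lemma~2.14]{GSVW}. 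Your route buys self-containment at the cost of length; the paper's route is shorter but relies on the reader locating the cited lemma. Either is fine, and you correctly note the \cite{GSVW} alternative at the end.
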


\begin{proof}
The commutativity of the left-hand square follows from the explanations in the proof of part (a) in \cref{Thm: Homotopy Equivalences}. It follows immediately from the definition of the vertical maps and from the details provided above for the construction of the maps~$|\lambda^k|$ and $|w\lambda^k|$ that the middle square commutes.  The commutativity of the right-hand square is a consequence of the description of the horizontal maps given in \cite[Lemma~6.3]{GG} and in the proof of part~(b) in \cref{Thm: Homotopy Equivalences}.
If $w\cM_n = i\cM_n$, the left- and right-hand vertical maps are homotopy equivalences by \cite[Lem\-ma~2.14]{GSVW}. By \cref{Thm: Homotopy Equivalences} and the analogous statements in \cite{GraysonExterior}, then also the two middle vertical maps are homotopy equivalences.
\end{proof}

Recall that \cite[Theorem~3.1]{GG} provides a natural isomorphism between the $r^\mathrm{th}$ homotopy group $\pi_r(|g\cM|)$ and Quillen's $K$-group $K_r(\cM)$ for any exact category $\cM$ and $r \ge 0$. By passing to homotopy groups in the top row of diagram (\ref{Eq: comparing g and G}), we thus finally obtain the operation
\begin{equation}\label{Eq: Old lambda operation}
\lambda^k: K_r(\cM_n) \rightarrow K_r(\cM_{nk}),
\end{equation}
the main outcome of \cite{GraysonExterior}. Via the commutative diagram (\ref{Eq: comparing g and G}), we may identify this operation with the map obtained by passing to homotopy groups of the map $\lambda^k \colon |iG\cM_n| \rightarrow |iG\cM_{nk}|$ given by \eqref{Eq: morphism lambda}. Furthermore, when $r=0$, this operation agrees with the standard and classical operation $\lambda^k$ on Grothendieck groups, see \cite[Section~8]{GraysonExterior}. Finally, we recall \cite[Section~9]{GraysonExterior}, when $r \ge 1$ and the given assembly of power operations is given by exterior powers of finitely generated projective modules over a commutative ring $R$, this operation agrees with the $\lambda$-operation $\lambda^k \colon K_r(R) \rightarrow K_r(R)$ defined in \cite{Hiller} and \cite{Kratzer}.

\medskip

We conclude this section with variants of Waldhausen fibration theorem which will be crucial for the next section.

\begin{thm}\label{Thm: WaldFib}
Let $\cM$ be an exact category and let $v\cM\subset w\cM$ be two subcategories of weak equivalences. Suppose in addition that $(\cM,w)$ has a cylinder functor satisfying the cylinder axiom (see \cite[p.~348--349]{Wald}). Let $\cM^w$ denote the full subcategory of $\cM$ consisting of $w$-acyclic objects. Then, for any $k\geq 1$, we have the following two homotopy fibre sequences:
\begin{equation}\label{Eq: G^k fibre sequence}
|vG^k\cM^w|\rightarrow |vG^k\cM|\rightarrow |w G^k\cM|
\end{equation}
\begin{equation}\label{Eq: Sub_k fibre sequence}
|\Sub_kvG\cM^w|\rightarrow |\Sub_kvG\cM|\rightarrow|\Sub_k w G\cM|
\end{equation}
\end{thm}
\begin{proof}

According to the original Waldhausen fibration theorem (see \cite[1.6.4]{Wald} or \cite[1.8.2]{ThomTro}) we have the homotopy fibre sequence
\begin{equation}\label{Eq: S. fibre sequence}
|vS.\cM^w|\rightarrow |vS.\cM|\rightarrow |wS.\cM|.
\end{equation}
Stating that the sequences \eqref{Eq: G^k fibre sequence}, \eqref{Eq: Sub_k fibre sequence} and \eqref{Eq: S. fibre sequence} are fibre sequences assumes that they come with a natural null-homotopy; these null-homotopies arise from the fact that the spaces $|wG^k\cM^w|$, $|\mathrm{Sub}_kwG\cM^w|$ and $|wS.\cM^w|$ are contractible which in turn follows from the fact that the distinguished object $0$ (the `diagram' consisting of zeroes) at each level of the ($k$-)simplicial categories $wG^k\cM$, $wS.\cM$ and $\mathrm{Sub}_kwG\cM$ is an initial object in the subcategories $wG^k\cM^w$, $wS.\cM^w$ and $\mathrm{Sub}_kwG\cM^w$. \\
An exact category is obviously pseudo-additive in the sense of \cite[Definition 2.3]{GSVW}. By \cite[Theorem~2.6]{GSVW} we therefore have a homotopy equivalence between $\Omega |wS.\cM|$ and $|wG\cM|$ and similarly between $\Omega |vS.\cM|$ and $|vG\cM|$ and between $\Omega |vS.\cM^w|$ and $|vG\cM^w|$.
These homotopy equivalences define a morphism between the sequence \eqref{Eq: S. fibre sequence} and the sequence \eqref{Eq: G^k fibre sequence} for $k=1$ which is in fact a morphism between sequences with null homotopies. Similarly, \cref{Thm: Homotopy Equivalences} defines a morphism between the latter sequence and the sequence \eqref{Eq: G^k fibre sequence} or \eqref{Eq: Sub_k fibre sequence} which again is a morphism between sequences with null homotopies. (In order to verify the latter for example for \eqref{Eq: Sub_k fibre sequence}, we use the fact that the homeomorphism in \cref{Thm: Homotopy Equivalences}(a) is natural, as stated there, and  compatible with products, after interpreting a natural transformation between functors from a category~$\cC$ to a category~$\mathcal{D}$ as a functor from $\cC \times \{0 < 1\}$ to $\mathcal{D}$.) The sequences in the statement are then homotopy fibre sequences by \cref{Cor: eq of sequences}.
\end{proof}

\section{Combinatorially versus Homotopically Defined Operations}\label{sec: comparing lambda operations}

The goal of this section is to prove that the exterior power operations constructed in \cite{HKT17} combinatorially on higher $K$-groups of a scheme agree with those defined in \cite{GraysonExterior} using homotopical methods. We will do this in the general setup introduced in \cref{Sec: Operations}. In particular, we continue to assume that we are given an assembly of power operations as in \cref{def: power operations}. We furthermore assume:
\begin{itemize}
\item Every exact category $\cM_n$ is idempotent complete.

\item For any $s \ge 0$, $n \ge 0$ and $k \ge 1$ and for every quasi-morphism $f.$ in any $C(B^q)^s \cM_n$, also $\bigwedge^k(f.)$ is a quasi-isomorphism.

\end{itemize}

Note that, as in \cref{cor: Dold-Puppe operations IV}, the assumption for $s \ge 1$ in the second bullet point above makes sense only after the same condition for $s-1$ is assumed to be true. To ease any potential worries about the combinatorial complexity of this section, we remark already now that the assumption for $s \ge 1$ comes into play only at the very end of this section when we apply the inductive hypothesis in the proof of our main theorem (\cref{Thm: main thm}) to the induced assembly of power operations on the categories $B^q\cM_n$, $n \ge 0$. In particular, throughout this section (apart from the very end), we will consider only one-dimensional complexes and we will use the second bullet point only when~$s=0$.

Recall the main outcome of \cite{Grayson2012} is an algebraic description of Quillen's $K$-groups and this identification is achieved via a number of isomorphisms. In order to prove our main theorem (\cref{Thm: main thm}), we will successively verify in the statements below that each of these isomorphisms is compatible with (exterior) power operations. Note however that we use spaces while Grayson uses spectra. The relevant commutative diagrams are presented in a way so they can be joined to one another.

To avoid repetitions, when we say below that an assembly of power operations preserves some weak equivalences, we tacitly assume that the symbol~$\lambda^k$ denotes the corresponding morphism $\lambda^k$ introduced in the previous section, see \eqref{Eq: morphism lambda} and \eqref{Eq: Old lambda operation}.

For any idempotent complete exact category $\cM$, let $i\cM$ denote the category of weak equivalences for $\cM$ consisting only of isomorphisms in $\cM$, let $qC\cM$ denote the category of weak equivalences for $C\cM$ consisting of quasi-isomorphisms in $C\cM$ and let $qB\cM$ denote the category of weak equivalences for $B\cM$ consisting of morphisms which are quasi-isomorphisms with respect to both differentials. The standard inclusion functor $\cM \rightarrow C\cM$, $V \mapsto V[0]$, then induces a continuous map
\[|iG\cM| \rightarrow |qGC\cM|\]
which is a homotopy equivalence by \cite[Theorem~1.11.7]{ThomTro}.
It is obvious that every assembly of power operations preserves isomorphisms. We consider the categories $C\cM_n$ to be equipped with the assembly of power operations according to \cref{prop: DoldPuppe Operations} and \cref{Prop: DoldPuppeOperations II}, which by assumption preserves quasi-isomorphisms.

\begin{lem}\label{lem: first step}
For any $n\ge 0$ and $k\ge 1$, the following diagram commutes up to homotopy:
\[\xymatrix{
|iG\cM_n| \ar[r]^{\lambda^k} \ar[d] & |iG\cM_{nk}| \ar[d]\\
|qGC\cM_n| \ar[r]^{\lambda^k} & |qGC\cM_{nk}|
}\]
\end{lem}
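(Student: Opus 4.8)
The plan is to show that the square commutes by exhibiting it, on the level of $G$-construction spaces (before passing to homotopy), as coming from a square of morphisms of simplicial categories that commutes on the nose, or up to a natural transformation. The key observation is that both $\lambda^k$-maps in the diagram are built from the very same ``pointwise'' recipe of \cref{prop: lambda operation as k-simplicial map} (applied once to the assembly on $\cM_n$ and once to the assembly on $C\cM_n$), and the vertical maps are induced by the exact functor $\iota\colon \cM\to C\cM$, $V\mapsto V[0]$. So the heart of the matter is a compatibility between $\iota$ and the power-operation functors, namely: for a sequence $V_1\rightarrowtail\dots\rightarrowtail V_k$ in $\cM_n$ one has a canonical isomorphism $\iota(V_1\wedge\dots\wedge V_k)\cong \iota(V_1).\wedge\dots\wedge\iota(V_k).$ in $C\cM_{nk}$, compatibly with $\otimes$ versus $\otimes_\Delta$, and naturally in the sequence.

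**First I would** verify this compatibility. Since $\iota(V).=V[0]$ is the complex concentrated in degree $0$, the corresponding simplicial object $\Gamma(V[0])$ is the constant simplicial object on $V$. The power operation on $C\cM_n$ was defined (see the construction before \cref{prop: DoldPuppe Operations}) by applying $\Gamma$, then the pointwise power operation $F_k(\cM_n)\to\cM_{nk}$ levelwise, then $N$. Applied to constant simplicial objects this returns the constant simplicial object on $V_1\wedge\dots\wedge V_k$, and $N$ of a constant simplicial object is the object placed in degree $0$; likewise $\otimes_\Delta$ of two degree-$0$ complexes is $N(\Gamma V.\otimes\Gamma W.) = (V\otimes W)[0]$ by the Eilenberg--Zilber/normalization argument already used in the proofs above. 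Hence $\iota$ intertwines the two assemblies of power operations strictly (or at worst up to the canonical normalization isomorphisms), and this identification is natural in $F_k(\cM_n)$ and compatible with the structure maps (E1)--(E5). In particular $\iota$ carries isomorphisms to quasi-isomorphisms, so $i\cM_n\to qC\cM_n$ is well-defined, as already noted in the text.

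**Next I would** feed this through the $G$-construction machinery of \cref{sec: operations on spaces}. Because $\iota$ is an exact functor $\cM_n\to C\cM_n$ taking $i$-equivalences to $q$-equivalences and intertwining the power operations and tensor products, it induces a morphism of the relevant ($k$-)simplicial categories commuting with the pointwise maps $M\mapsto (x_\bullet)\mapsto M(x_1)\ast_2\dots\ast_k M(x_k)$ of \cref{prop: lambda operation as k-simplicial map} and with the reindexing along $\Xi$. Concretely, one obtains a commutative (up to the above canonical natural isomorphism) square of ($k$-)simplicial categories relating $\Sub_k\, iG\cM_n \to iG\cM_n$, its $C\cM$-analogue, and the $w\lambda^k$-maps $\Sub_k\, wG\cM_n \to wG^k\cM_{nk}$. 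Applying the nerve $\cN$, realising, and using \cref{Thm: Homotopy Equivalences} to invert the outer homotopy equivalences in the zigzag \eqref{Eq: morphism lambda} for both rows then yields the homotopy-commutativity of the square in the statement — any natural transformation of functors becomes a homotopy after realising nerves, which is what turns the strict/canonical square into a homotopy-commutative one after the identifications.

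**The main obstacle** I expect is purely bookkeeping: making the zigzags \eqref{Eq: morphism lambda} for the two rows genuinely compatible, i.e.\ checking that the functor $\iota$ induces a map of the \emph{entire} diagram \eqref{Eq: morphism lambda} (including the edgewise-subdivision homeomorphism of \cref{Thm: Homotopy Equivalences}(a) and the $G$-vs-$G^k$ homotopy equivalence of (b)), not just of its ends. Both those maps are natural in the exact-category-with-weak-equivalences, so this is a naturality statement, but one has to be careful that the natural transformation realising $\iota\circ(\text{power op})\Rightarrow(\text{power op})\circ\iota$ is compatible with edgewise subdivision and with the concatenation maps defining $G^k$; this is where the fact that $N\Gamma$ of a constant object is concentrated in degree $0$ does the real work, keeping everything strict rather than merely coherent. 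Once that is in place, the lemma follows formally.
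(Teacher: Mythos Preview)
Your proposal is correct and is precisely the unpacking of what the paper means by its one-line proof (``This immediately follows from the construction of the maps $\lambda^k$ in the previous section''): the inclusion $\iota\colon V\mapsto V[0]$ strictly intertwines the two assemblies of power operations because $\Gamma(V[0])$ is the constant simplicial object on $V$, and the entire zigzag \eqref{Eq: morphism lambda} is natural in the exact category with weak equivalences and its assembly of power operations. You have simply made explicit the bookkeeping that the paper leaves implicit; there is no genuine difference in approach.
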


\begin{proof}
This immediately follows from the construction of the maps $\lambda^k$ in the previous section.
\end{proof}

Similarly to $qC\cM_n$, let $qC^\chi \cM_n$ denote the category of weak equivalences for the subcategory $C^\chi\cM_n$ of $C\cM_n$ consisting of quasi-isomorphisms in $C^\chi\cM$. The inclusion $C^\chi \cM_n \subseteq C \cM_n$ then induces a continuous map
\[|qGC^\chi \cM_n| \rightarrow |qGC\cM_n|\]
which after applying $\pi_r$ for $r\ge 1$ becomes an isomorphism by \cite[Remark~5.8]{Grayson2012}. Furthermore, we consider the categories $C^\chi \cM_n$ to be equipped with the assembly of power operations according to \cref{Prop: DoldPuppeOperations II}. It again preserves quasi-isomorphisms.

\begin{lem}\label{lem: second step}
For any $n\ge 0$ and $k\ge 1$, the following diagram commutes up to homotopy:
\[\xymatrix{
|qGC\cM_n| \ar[r]^{\lambda^k} & |qGC\cM_{nk}|\\
|qGC^\chi \cM_n| \ar[r]^{\lambda^k}\ar[u] & |qGC^\chi \cM_{nk}| \ar[u]
}\]
\end{lem}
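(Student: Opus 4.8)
The plan is to reduce the statement to the naturality of the construction of $\lambda^k$ from the previous section with respect to exact inclusion functors that preserve the relevant structure. The key observation is that the vertical maps $|qGC^\chi\cM_n| \to |qGC\cM_n|$ are induced, after applying the nerve and geometric realisation functors to the $G^k$-constructions, by the inclusion of exact categories with weak equivalences $(C^\chi\cM_n, q) \hookrightarrow (C\cM_n, q)$. By \cref{Prop: DoldPuppeOperations II}, the restriction of the assembly of power operations on $C\cM_n$ to the subcategories $C^\chi\cM_n$ is again an assembly of power operations, and the inclusion is compatible with the tensor products and the power operations on the nose (it sends $V_1. \wedge \ldots \wedge V_k.$ computed in $C^\chi\cM_n$ to the object with the same name computed in $C\cM_n$, since the latter is literally defined by the same formula). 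Hence the inclusion is a strict morphism of assemblies of power operations in the sense implicit in \cref{Sec: Operations}, and it preserves the subcategories of quasi-isomorphisms by construction.

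First I would record that, for such a strict morphism of assemblies preserving weak equivalences, each of the maps appearing in the zigzag \eqref{Eq: morphism lambda} commutes strictly with the induced maps on $|wG\,\cdot\,|$, $|\mathrm{Sub}_k wG\,\cdot\,|$ and $|wG^k\,\cdot\,|$: the functor $M \mapsto (x_1, \ast_2, \ldots, x_k) \mapsto M(x_1) \ast_2 \ldots \ast_k M(x_k)$ and the functor $\mathrm{Exact}(\Xi, -)$ used in the definition of $\lambda^k$ are both visibly natural in the target exact category, and the homotopy equivalences of \cref{Thm: Homotopy Equivalences}(a),(b) are natural as well (part (a) is a natural homeomorphism for simplicial sets, part (b) is induced by the natural functor $V \mapsto (V,0,\ldots,0)$). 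Therefore the square obtained by applying $\lambda^k$ in the top and bottom rows and the inclusion-induced maps in the columns commutes up to the (coherent choice of) homotopies already used to invert the left- and right-hand equivalences in \eqref{Eq: morphism lambda}; passing to geometric realisations and remembering that we have inverted the outer homotopy equivalences, the resulting square of maps in the homotopy category of spaces commutes. This is exactly the assertion of the lemma.

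The one point requiring a little care — and I expect it to be the only genuine obstacle — is checking that the inclusion $C^\chi\cM_n \hookrightarrow C\cM_n$ really does intertwine the $\lambda^k$ maps \emph{before} inverting homotopy equivalences, i.e.\ that it is compatible with the identifications made in \cref{Prop: Step 0} between the combinatorial $|g^k\,\cdot\,|$ picture of \cite{GraysonExterior} and the $|wG^k\,\cdot\,|$ picture used here; one has to observe that diagram \eqref{Eq: comparing g and G} is itself natural in the exact category, so the comparison is automatic. Once that naturality is in hand the proof is purely formal, exactly as for \cref{lem: first step}, and I would phrase the argument so that it applies verbatim to all of the subsequent comparison lemmas in this section: each is an instance of "an inclusion of exact categories with weak equivalences which is a strict morphism of assemblies of power operations preserving the given weak equivalences induces a square commuting up to homotopy after applying $|wG\,\cdot\,|$ and $\lambda^k$."
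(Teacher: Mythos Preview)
Your proposal is correct and is essentially a detailed unpacking of the paper's one-line proof, which simply says the lemma ``immediately follows from the construction of $\lambda^k$''. The naturality of the zigzag \eqref{Eq: morphism lambda} with respect to the strict inclusion $(C^\chi\cM_n,q)\hookrightarrow (C\cM_n,q)$ of assemblies of power operations is exactly what the paper has in mind, so your argument matches the intended approach.
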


\begin{proof}
Again, this immediately follows from the construction of $\lambda^k$.
\end{proof}

For $n \ge 0$, let $bB\cM_n$ denote the category of weak equivalences for $B\cM_n$ consisting of morphisms $f$ in $B\cM_n$ which become quasi-isomorphisms in $C\cM_n$ after applying the functor $\bot_n$, the functor which forgets the top differential in $B\cM_n$. Furthermore, let $B^b \cM_n$ denote the full subcategory of $B\cM_n$ consisting of binary complexes which become acyclic after applying the functor~$\bot_n$. Finally, let $qB^b\cM_n$ denote the category of weak equivalences for $B^b\cM_n$ consisting of morphisms which are quasi-isomorphisms with respect to the top differential. Similarly to \cref{Prop: DoldPuppeOperations II} and \cref{cor: Dold-Puppe operations III}, we obtain an assembly of power operations on the categories $B^b \cM_n$, $n\ge 0$, which preserves these weak equivalences.
The analogously defined functor~$\top_n$ (forgetting the bottom differential) induces a continuous map
\[\top_n \colon |qGB^b\cM_n| \rightarrow |qGC^\chi \cM_n|\]
which is a homotopy equivalence by \cite[Theorem~5.9]{Grayson2012}.

\begin{lem}\label{lem: third step}
For any $n\ge 0$ and $k\ge 1$, the following diagram commutes up to homotopy:
\[\xymatrix{
|qGC^\chi\cM_n| \ar[r]^{\lambda^k} & |qGC^\chi\cM_{nk}|\\
|qGB^b \cM_n| \ar[r]^{\lambda^k}\ar[u]_{\top_n} & |qGB^b \cM_{nk}| \ar[u]_{\top_{nk}}
}\]
\end{lem}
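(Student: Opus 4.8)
The plan is to follow the same pattern established by the proofs of \cref{lem: first step} and \cref{lem: second step}: namely, to argue that the commutativity is built into the very construction of the operations $\lambda^k$ in \cref{sec: operations on spaces}. However, unlike those two lemmas, here the two vertical maps are \emph{not} induced by a single inclusion of one exact category (with weak equivalences) into another; the map $\top_n$ forgets the bottom differential of a binary complex and then lands in $C^\chi\cM_n$. So the first thing I would do is to make precise, at the level of the $G$-construction and before realisation, that there is a functor $B^b\cM_n \to C^\chi\cM_n$ (sending $\VV = (V., d, \tilde d)$ to $(V., d)$) which carries the weak equivalences $qB^b\cM_n$ to quasi\-/isomorphisms in $C^\chi\cM_n$, and that this functor is \emph{compatible with the assemblies of power operations}. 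The compatibility with power operations is the heart of the matter: one has to observe that the power operation $\VV_1 \wedge \ldots \wedge \VV_k$ on binary complexes was, by \cref{cor: Dold-Puppe operations III}, defined by applying the construction of \cref{prop: DoldPuppe Operations} separately to each of the two differentials on the common underlying graded object, so forgetting the bottom differential commutes with forming $\wedge$; likewise for $\otimes_\Delta$. (This is exactly the content of \cite[Lemma~4.2]{HKT17} invoked in the proof of \cref{cor: Dold-Puppe operations III}: the underlying graded object of $\bigwedge^k(\VV)$ does not depend on the differentials.)

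Given that, I would then unwind what the operations $\lambda^k$ on the two $K$-theory spaces are: by \cref{prop: lambda operation as k-simplicial map} and the discussion around \eqref{Eq: morphism lambda}, $\lambda^k$ on $|qGB^b\cM_n|$ is the zigzag $|qGB^b\cM_n| \leftarrow |\Sub_k qGB^b\cM_n| \xrightarrow{|q\lambda^k|} |qG^kB^b\cM_{nk}| \leftarrow |qGB^b\cM_{nk}|$, and similarly on $|qGC^\chi\cM_n|$; the maps $q\lambda^k$ are built functorially out of the same combinatorial gadget ($\Gamma^k(A)$, the map $\Xi$, and the pointwise assignment $M \mapsto (x_1,\ast_2,\ldots,\ast_k,x_k)\mapsto M(x_1)\ast_2\ldots\ast_k M(x_k)$), which depends only on the power operations $\otimes$, $\wedge$ on the underlying assembly. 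Since the functor $\top_n$ is compatible with $\otimes$, $\wedge$ (the previous paragraph), applying $G$, then $\Sub_k$, then $q$(-), then $\cN$, then $|-|$ to it produces a genuine (strictly) commutative diagram of \emph{all four squares} of the zigzag; in particular the three squares of the zigzag commute on the nose, not just up to homotopy. The two left- and right-hand homotopy equivalences in the zigzag that one inverts are the natural ones from \cref{Thm: Homotopy Equivalences}(a),(b), and those are functorial in the exact category with weak equivalences, hence compatible with $\top_n$ as well. Assembling these squares gives the homotopy\-/commutativity of the square in the statement.

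Concretely the steps are: (1) record that $\top_n\colon B^b\cM_n \to C^\chi\cM_n$ is an exact functor sending $qB^b\cM_n$ into $qC^\chi\cM_n$; (2) check that it intertwines the power operations, i.e.\ $\top_n(\VV_1)\wedge\ldots\wedge\top_n(\VV_k) = \top_n(\VV_1\wedge\ldots\wedge\VV_k)$ and likewise for $\otimes_\Delta$ and for the structure maps (E1)--(E5), using that $\bigwedge^k$ of a binary complex is defined differential-by-differential; (3) conclude that $\top_n$ induces a strictly commuting map of the diagrams defining $q\lambda^k$ at the level of $(k+1)$-simplicial sets, hence, after realising, strictly commuting squares $|\Sub_kqGB^b\cM_n|\to|\Sub_kqGC^\chi\cM_n|$, $|q\lambda^k|$-compatibility, and $|qG^kB^b\cM_{nk}|\to|qG^kC^\chi\cM_{nk}|$; (4) observe the two ``backward'' homotopy equivalences of the zigzag \eqref{Eq: morphism lambda} are natural in $(\cM,w)$ by \cref{Thm: Homotopy Equivalences}, so they too are compatible with $\top_n$, and glue everything to get the homotopy square in the statement. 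I expect step (2) to be the only genuine obstacle: one must be slightly careful that ``forgetting the bottom differential'' really does commute with the Dold--Puppe construction used to define $\wedge$ and $\otimes_\Delta$ on complexes — this is true because those constructions act on the underlying (graded/simplicial) object and are applied to each differential independently, but it is worth spelling out. Everything else is formal functoriality, exactly parallel to \cref{lem: first step} and \cref{lem: second step}.
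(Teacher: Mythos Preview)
Your proposal is correct and is precisely an unpacking of what the paper means by ``Again, this immediately follows from the construction of $\lambda^k$.'' The paper treats this lemma exactly like \cref{lem: first step} and \cref{lem: second step}, relying implicitly on the functoriality of the zigzag \eqref{Eq: morphism lambda} and on the fact (from \cref{cor: Dold-Puppe operations III}) that the power operations on binary complexes are defined differential-by-differential, so $\top_n$ intertwines them; your steps (1)--(4) just make this explicit.
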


\begin{proof}
Again, this immediately follows from the construction of $\lambda^k$.
\end{proof}

(From now on, we use some largely standard notations and terminology about homotopy fibres and fibre sequences introduced in \cref{Sec: Homotopy}.) Recall that, by Waldhausen's fibration theorem (\cref{Thm: WaldFib}), for every $n$,
the obvious sequence
\begin{equation}\label{Eq: fibre sequence}
|qGB^b\cM_n| \rightarrow |qGB\cM_n| \xrightarrow{\varepsilon_n} |bGB\cM_n|
\end{equation}
of continuous maps is a fibre sequence (where we have introduced the notation $\varepsilon_n$ for the right-hand map), i.e., the corresponding whisker map
\[|qGB^b\cM_n| \ra \mathrm{hofib}(\varepsilon_n)\]
(see \cref{def: null-homotopy}) is a homotopy equivalence. The sequences \eqref{Eq: morphism lambda} of continuous maps defining the operations \[\lambda^k \colon |qGB\cM_n| \rightarrow |qGB\cM_{nk}| \quad \textrm{and} \quad \lambda^k \colon |bGB\cM_n| \rightarrow |bGB\cM_{nk}|\]
then induce a  sequence of continuous maps defining an operation
\[\lambda^k \colon \mathrm{hofib}(\varepsilon_n) \rightarrow \mathrm{hofib}(\varepsilon_{nk}).\]

\begin{prop}\label{Prop: identifying lambda operations}
For any $n \ge 0$ and $k \ge 1$, the following diagram commutes up to homotopy:
\[\xymatrix{
|qGB^b\cM_n| \ar[r]^{\lambda^k} \ar[d] & |qGB^b\cM_{nk}| \ar[d] \\
\mathrm{hofib}(\varepsilon_n) \ar[r]^{\lambda^k} & \mathrm{hofib}(\varepsilon_{nk})
}\]
\end{prop}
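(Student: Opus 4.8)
The plan is to unwind the definitions of the two vertical maps and the two horizontal maps, and to exhibit the required homotopy from the way $\lambda^k$ is built levelwise on the $G$-construction. Recall that $\varepsilon_n$ is induced by an exact functor $B^b\cM_n \hookrightarrow B\cM_n$ followed by the localisation inverting the $b$-equivalences, so that, by \cref{Thm: WaldFib}, sequence \eqref{Eq: fibre sequence} is a fibre sequence with its canonical null-homotopy coming from the fact that $0$ is initial. The left-hand vertical map in the diagram is then the whisker map $|qGB^b\cM_n| \to \mathrm{hofib}(\varepsilon_n)$ associated to this null-homotopy, in the sense of \cref{Sec: Homotopy}; likewise on the right. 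So what must be checked is that the two zigzag sequences \eqref{Eq: morphism lambda} defining $\lambda^k$ on $|qGB\cM_\bullet|$ and on $|bGB\cM_\bullet|$, together with the induced zigzag on $|qGB^b\cM_\bullet|$, assemble to a morphism of fibre sequences (with null-homotopies) in the sense needed to apply the criterion of \cref{Sec: Homotopy}.

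First I would observe that $\lambda^k$, in all three settings $qGB$, $bGB$ and $qGB^b$, is induced by literally the same combinatorial recipe: the $k$-simplicial map $w\lambda^k$ of \eqref{Eq: morphism w lambda} built from the functor $M \mapsto (\,(x_1,\ast_2,\dots,x_k)\mapsto M(x_1)\ast_2\cdots\ast_k M(x_k)\,)$ and the multi-exact map $\Xi$, composed with the homotopy equivalences of \cref{Thm: Homotopy Equivalences}. Since the inclusion $B^b\cM_n\hookrightarrow B\cM_n$ is an exact functor that is the identity on objects and since $\Xi$, concatenation and edgewise subdivision are all natural in the exact category, each square obtained by applying these constructions strictly commutes at the level of ($k$-)simplicial categories, hence strictly after realisation — before inverting any homotopy equivalence. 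Moreover the same holds for the squares involving the functors $\bot_n$, $\top_n$ that realise $\varepsilon_n$, because exterior and tensor operations on binary complexes are built differential-by-differential (see \cite[Lemma~4.2]{HKT17} and \cref{cor: Dold-Puppe operations III}), so $\lambda^k$ commutes with forgetting a differential on the nose. The only non-strictness is the choice of homotopy inverses to the maps of \cref{Thm: Homotopy Equivalences} in each of the three columns; these are canonical up to homotopy, and a standard diagram chase with the functoriality of \cref{Thm: Homotopy Equivalences} shows that the resulting three zigzags are compatible with the horizontal maps of \eqref{Eq: fibre sequence} up to coherent homotopy.

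Having set this up, the conclusion is then exactly the statement that a strictly commuting (up to the fixed homotopies) morphism between two fibre sequences with null-homotopies induces, on whisker maps, a homotopy-commutative square; this is precisely the elementary criterion provided by \cref{Cor: eq of sequences} (and the discussion in \cref{Sec: Homotopy}). I expect the main obstacle to be bookkeeping rather than conceptual: one must verify that the null-homotopy witnessing $\varepsilon_n\circ(\text{incl})\simeq 0$ is carried by $\lambda^k$ to the corresponding null-homotopy for $\varepsilon_{nk}$, up to the coherences above. Since in both cases the null-homotopy is the canonical one coming from $0$ being an initial object, and since $\lambda^k$ sends the zero object to the zero object, this compatibility holds essentially by construction; making this precise in the language of \cref{Sec: Homotopy} is the technical heart of the argument, and once it is in place \cref{Cor: eq of sequences} yields the claimed homotopy-commutative square.
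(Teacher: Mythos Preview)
Your approach is essentially the same as the paper's: both reduce the square to the compatibility of $\lambda^k$ with the null-homotopies in \eqref{Eq: fibre sequence}, use that these null-homotopies come from $0$ being an initial object in $bGB\cM_n$, and observe that $\lambda^k(0)=0$. The paper compresses all of this into one sentence, invoking \cref{lem: Fiber compatibility} directly.

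One correction: the result you need from \cref{Sec: Homotopy} is \cref{lem: Fiber compatibility}, not \cref{Cor: eq of sequences}. The corollary only tells you that a levelwise homotopy equivalence of sequences with null-homotopies preserves the property of being a fibre sequence; it says nothing about the square involving whisker maps. It is \cref{lem: Fiber compatibility} that asserts exactly what you wrote, namely that a (weak) morphism of sequences with null-homotopies yields a (homotopy-)commutative square between the whisker maps and the induced map on homotopy fibres. With that citation fixed, and with your observation that the null-homotopies for the $\mathrm{Sub}_kG$- and $G^k$-versions of \eqref{Eq: fibre sequence} are again given by $0$ being initial (so that each leg of the zigzag \eqref{Eq: morphism lambda} is a genuine morphism of sequences with null-homotopies), the argument goes through.
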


\begin{proof}
According to \cref{lem: Fiber compatibility}, it suffices to show that each square in the following diagram commutes:
\[\xymatrix{
|qGB^b\cM_n| \ar[r] \ar[d] & |\mathrm{Sub}_k qGB^b\cM_n| \ar[r] \ar[d]& |qG^kB^b\cM_{nk}| \ar[d]& |qGB^b\cM_{nk}| \ar[d]\ar[l]\\
P|bGB\cM_n| \ar[r] & P|\mathrm{Sub}_k bGB\cM_n| \ar[r]& P|bG^kB\cM_{nk}| & P|bGB\cM_{nk}| \ar[l]
}\]
Here, the horizontal sequences are induced by the sequence \eqref{Eq: morphism lambda} and the vertical morphisms are the null homotopies introduced in the proof of \cref{Thm: WaldFib}. The commutativity of the left- and right-hand square has already been observed there. The commutativity of the middle square
follows just from the fact that the morphism \eqref{Eq: morphism w lambda} obviously maps $0$ to $0$ at each level.
\end{proof}

We now consider the fibre sequence
\[\xymatrix{\mathrm{hofib}(\bot_n) \ar[r] &  |qGB\cM_n| \ar[r]^{\bot_n} & |qGC\cM_n|}\]
and let
\[\lambda^k \colon \mathrm{hofib}(\bot_n) \rightarrow \mathrm{hofib}(\bot_{nk})\]
denote the operation induced by the operations
\[\lambda^k \colon |qGB\cM_n| \rightarrow |qGB\cM_{nk}| \quad \textrm{and} \quad \lambda^k \colon |qGC\cM_n| \rightarrow |qGC\cM_{nk}|.\]

\begin{lem}\label{lem: fifth step}
For any $n\ge 0$ and $k\ge 1$, we have the following natural homotopy commutative diagram:
\[\xymatrix{
\mathrm{hofib}(\varepsilon_n) \ar[r]^{\lambda^k} \ar[d] & \mathrm{hofib}(\varepsilon_{nk}) \ar[d] \\
\mathrm{hofib}(\bot_n) \ar[r]^{\lambda^k} & \mathrm{hofib}(\bot_{nk})
}\]
\end{lem}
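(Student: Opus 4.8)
The plan is to exhibit a homotopy commutative square of fibre sequences relating the two fibration sequences whose homotopy fibres are $\mathrm{hofib}(\varepsilon_n)$ and $\mathrm{hofib}(\bot_n)$, and then to invoke the elementary functoriality of homotopy fibres (the results collected in the final appendix, \cref{Sec: Homotopy}) to get the induced square on homotopy fibres. Concretely, the sequence defining $\mathrm{hofib}(\varepsilon_n)$ is $|qGB^b\cM_n| \to |qGB\cM_n| \xrightarrow{\varepsilon_n} |bGB\cM_n|$, and the sequence defining $\mathrm{hofib}(\bot_n)$ is $\mathrm{hofib}(\bot_n) \to |qGB\cM_n| \xrightarrow{\bot_n} |qGC\cM_n|$. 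These two fit into one commutative ladder whose vertical maps are the identity on $|qGB\cM_n|$ in the middle, the whisker map $|qGB^b\cM_n| \to \mathrm{hofib}(\varepsilon_n)$ composed with... — more precisely, the relevant map of fibre sequences is the one induced by the inclusion $B^b\cM_n \subseteq B\cM_n$ and the functor $\bot_n$, exactly as in the proof of Waldhausen's fibration theorem: $|bGB\cM_n|$ is identified (via the fibration theorem machinery) so that the fibre of $\varepsilon_n$ maps naturally to the fibre of $\bot_n$.

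First I would make precise the map of fibre sequences. There is a commutative (on the nose, by construction) diagram
\[\xymatrix{
|qGB^b\cM_n| \ar[r] \ar[d] & |qGB\cM_n| \ar[r]^{\varepsilon_n} \ar@{=}[d] & |bGB\cM_n| \ar[d]\\
\mathrm{hofib}(\bot_n) \ar[r] & |qGB\cM_n| \ar[r]^{\bot_n} & |qGC\cM_n|
}\]
where the right-hand vertical map is the canonical one provided by Waldhausen's fibration theorem comparing the two weak-equivalence structures $b \subset q$ on $B\cM_n$ with the target $C\cM_n$ via $\bot_n$ (the point being that a $q$-equivalence of binary complexes becomes, after $\bot_n$, a quasi-isomorphism, and a morphism is a $b$-equivalence precisely when it becomes a quasi-isomorphism after $\bot_n$). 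This gives a natural map $\mathrm{hofib}(\varepsilon_n) \to \mathrm{hofib}(\bot_n)$, and that is the left-hand vertical map in the square of the lemma. By \cref{lem: Fiber compatibility} (or the naturality statements in \cref{Sec: Homotopy}), to check that the $\lambda^k$-square on homotopy fibres commutes up to homotopy it suffices to check that the two ambient squares of spaces — the one on $|qGB\cM_n|$ (which commutes even strictly, being the identity map intertwining the two $\lambda^k$'s already defined on $|qGB\cM_n|$) and the one on $|bGB\cM_n| \to |qGC\cM_n|$ — commute up to homotopy, compatibly with the null-homotopies. The square on the base spaces commutes up to homotopy because both $\lambda^k$ on $|bGB\cM_n|$ and $\lambda^k$ on $|qGC\cM_n|$ are instances of the single zigzag construction \eqref{Eq: morphism lambda}, and the comparison map $|bGB\cM_n| \to |qGC\cM_n|$ is induced by $\bot_n$ on the level of the $G$-, $\mathrm{Sub}_kG$- and $G^k$-constructions, so it commutes with $w\lambda^k$ strictly before passing to realisations (naturality of the construction in \cref{prop: lambda operation as k-simplicial map} and \eqref{Eq: morphism w lambda} under exact, weak-equivalence-preserving functors such as $\bot_n$).

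The main obstacle I expect is bookkeeping, not substance: one must verify that the null-homotopies in the two fibre sequences are "the same" in the sense needed by \cref{lem: Fiber compatibility} and that $\lambda^k$ is compatible with them. As in the proof of \cref{Prop: identifying lambda operations}, both null-homotopies come from the fact that $0$ is an initial object in the relevant exact categories and that $\lambda^k$ sends $0$ to $0$ (the power operations are built from functors that kill the zero object in the obvious way, cf.\ the proof of \cref{Thm: WaldFib}); hence the null-homotopies for $\mathrm{Sub}_kG$ and $G^k$ are pinned down by the same fact, and the comparison map $\bot_n$, being exact, carries the initial object to the initial object. Once this compatibility is recorded, \cref{lem: Fiber compatibility} applies verbatim and yields the homotopy commutativity of the square, together with its naturality in $n$ and $k$, which is the assertion of the lemma.
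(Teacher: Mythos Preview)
Your approach is essentially the same as the paper's, but you are doing considerably more work than needed. The paper's entire proof is: apply the functoriality of the homotopy fibre to the strictly commutative square
\[\xymatrix{
|qGB\cM_n| \ar[r]^{\varepsilon_n} \ar[d]^{\mathrm{id}} & |bGB\cM_n| \ar[d]^{\bot_n}\\
|qGB\cM_n| \ar[r]^{\bot_{n}} & |qGC\cM_n|.
}\]
Since the $\lambda^k$'s on $\mathrm{hofib}(\varepsilon_n)$ and $\mathrm{hofib}(\bot_n)$ are, by definition, induced from the zigzags \eqref{Eq: morphism lambda} on the ambient spaces, and those zigzags commute strictly with $\mathrm{id}$ and with $\bot_n$ at every stage, ordinary (strict) functoriality of $\mathrm{hofib}$ already yields the square. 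There is no need to invoke \cref{lem: Fiber compatibility} or to track null-homotopies separately: that machinery was genuinely needed in \cref{Prop: identifying lambda operations} because there one is comparing a whisker map with an induced map on hofibres, whereas here both vertical arrows are induced maps on hofibres coming from a morphism of maps. Your three-term ladder with $|qGB^b\cM_n|$ in the upper-left conflates this lemma with the previous step; for this lemma you only need the two-term square above.
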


Note that again the vertical maps are homotopy equivalences; this follows from \cite[Theorem~4.8]{Grayson2012} in conjunction with \cref{prop: bounded vs nonnegative complexes} and \cite[Theorem~2.6]{GSVW} (see also proof of \cref{Thm: WaldFib}).

\begin{proof}
This follows from the functoriality of the homotopy fibre applied to the following commutative diagram:
\[\xymatrix{
|qGB\cM_n| \ar[r]^{\varepsilon_n} \ar[d]^{\mathrm{id}} & |bGB\cM_n| \ar[d]^{\bot_n}\\
|qGB\cM_n| \ar[r]^{\bot_{n}} & |qGC\cM_n|
}\]
\end{proof}

From now on, we simplify the proof of Grayson's explicit description of higher $K$-groups \cite[Corollary~7.4]{Grayson2012} inasmuch as we don't use homotopy cofibres. Those steps in his proof which {\em don't} involve homotopy cofibres (and also the proof of \cite[Theorem 1.11.7]{ThomTro} cited later in the proof of \cref{Thm: main thm}) can be most easily translated from the language of spectra to the language of spaces. However, this seems not to be possible for some of the remaining steps (such as \cite[Corollary~7.1]{Grayson2012}) because homotopy cofibres between spaces behave differently to homotopy cofibres between spectra. More importantly, lambda operations seem not to exist on spectra and, a fortiori, not on the homotopy cofibres involved. Luckily, we can do the remaining steps in the more elementary context of homotopy groups. This way, we ultimately completely avoid the use of spectra in the proof of \cite[Corollary~7.4]{Grayson2012}, rendering this proof somewhat more elementary.

\medskip

We consider the continuous map $\Delta_n \colon |qGC\cM_n| \rightarrow |qGB\cM_n|$ induced by the functor $(V.,d) \mapsto (V.,d,d)$.
Similarly to above, let
\[\lambda^k \colon \mathrm{hofib}(\Delta_n) \rightarrow \mathrm{hofib}(\Delta_{nk})\]
denote the induced operation.

\begin{lem}\label{lem: sixth step}
Let $n\ge 0, k\ge 1$ and $r \ge 1$. The following two squares of induced homomorphisms between homotopy groups commute:
\[\xymatrix{
\pi_r(\mathrm{hofib}(\bot_n)) \ar[r]^{\lambda^k} \ar[d]^{\mathrm{can}} & \pi_r(\mathrm{hofib}(\bot_{nk})) \ar[d]^{\mathrm{can}} \\
\pi_r(|qGB\cM_n|) \ar[r]^{\lambda^k} \ar[d]^{\partial} & \pi_r(|qGB\cM_{nk}|) \ar[d]^{\partial}\\
\pi_{r-1}(\mathrm{hofib}(\Delta_n)) \ar[r]^{\lambda^k} & \pi_{r-1}(\mathrm{hofib}(\Delta_{nk}))
}\]
\end{lem}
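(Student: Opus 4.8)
The plan is to deduce both commuting squares from the naturality of the connecting maps in long exact sequences of homotopy groups, using the fact that the vertical maps in the statement are precisely those connecting maps (respectively the canonical map out of a homotopy fibre).

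First I would observe that the two squares live inside a larger piece of the long exact sequence of the fibration $\mathrm{hofib}(\bot_n) \to |qGB\cM_n| \to |qGC\cM_n|$, together with the identification $\mathrm{hofib}(\Delta_n) \simeq \Omega(\text{something})$ coming from the fact that $\Delta_n$ and $\bot_n$ fit into the standard diagram $|qGC\cM_n| \xrightarrow{\Delta_n} |qGB\cM_n| \xrightarrow{\bot_n} |qGC\cM_n|$ whose composite is the identity. Concretely, the diagonal functor $(V.,d)\mapsto (V.,d,d)$ is a section of $\bot_n$, so $\bot_n$ is a split surjection on spaces and $\mathrm{hofib}(\bot_n)$ receives a map from $\Omega|qGC\cM_n|$; the connecting map $\partial \colon \pi_r(|qGB\cM_n|) \to \pi_{r-1}(\mathrm{hofib}(\Delta_n))$ in the statement is the one arising from this structure. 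Both the map $\mathrm{can}\colon \mathrm{hofib}(\bot_n) \to |qGB\cM_n|$ and the connecting homomorphism $\partial$ are natural with respect to maps of fibre sequences.

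The key step is then to package the operation $\lambda^k$ as a map of fibre sequences. For the upper square, this is immediate from the definition of $\lambda^k$ on $\mathrm{hofib}(\bot_n)$: by construction (via \cref{lem: Fiber compatibility} and the whisker-map formalism of \cref{Sec: Homotopy}), the operation $\lambda^k$ on $\mathrm{hofib}(\bot_n)$ is induced by the pair of operations $\lambda^k$ on $|qGB\cM_n|$ and on $|qGC\cM_n|$ together with a compatible null-homotopy, so the square with $\mathrm{can}$ commutes on the nose up to the chosen homotopy, and hence commutes after applying $\pi_r$. For the lower square, I would apply the naturality of the connecting homomorphism $\partial$ in the long exact sequence: the triple $(\lambda^k, \lambda^k, \lambda^k)$ of operations on $\mathrm{hofib}(\bot_n)$, $|qGB\cM_n|$, $\mathrm{hofib}(\Delta_n)$ constitutes a morphism of the relevant fibre sequences (again because $\lambda^k$ maps $0$ to $0$, so it is compatible with the null-homotopies given by $0$ being initial, exactly as in the proof of \cref{Prop: identifying lambda operations} and \cref{lem: fifth step}), and $\partial$ commutes with any such morphism.

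The main obstacle I expect is bookkeeping rather than anything deep: one must check that the operation $\lambda^k$ on the two homotopy fibres $\mathrm{hofib}(\bot_n)$ and $\mathrm{hofib}(\Delta_n)$ is genuinely the one induced, in the sense of \cref{Sec: Homotopy}, by the corresponding operations on the total and base spaces together with the canonical null-homotopies, and that these null-homotopies are the ones coming from $0$ being an initial object and being preserved by $\lambda^k$. Once this identification is in place — which is essentially the content of the whisker/fibre-compatibility lemmas already invoked repeatedly in this section — the commutativity of both squares is a formal consequence of the naturality of $\mathrm{can}$ and $\partial$. I would therefore structure the proof as: (i) recall that $\Delta_n$ is a section of $\bot_n$ and hence $\partial$ is defined; (ii) invoke \cref{lem: Fiber compatibility} to see that the upper square commutes up to homotopy; (iii) invoke the naturality of $\partial$ with respect to the map of fibre sequences given by $\lambda^k$ to see that the lower square commutes.
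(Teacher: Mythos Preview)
Your approach is essentially the paper's: both squares commute by the functoriality of the long exact sequence of homotopy groups associated to a fibre sequence, and the paper's proof is literally that single sentence. You are overcomplicating it, and in doing so you introduce some imprecision. The splitting $\bot_n \circ \Delta_n = \mathrm{id}$ is irrelevant here --- the connecting map $\partial$ in the lower square is simply the boundary map of the standard fibre sequence $\mathrm{hofib}(\Delta_n) \to |qGC\cM_n| \xrightarrow{\Delta_n} |qGB\cM_n|$, and the relevant morphism of fibre sequences is the triple $(\lambda^k, \lambda^k, \lambda^k)$ on $\mathrm{hofib}(\Delta_n)$, $|qGC\cM_n|$, $|qGB\cM_n|$, not the triple you list involving $\mathrm{hofib}(\bot_n)$. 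Once you recall that $\lambda^k$ on each homotopy fibre was \emph{defined} as the map induced by the pair $(\lambda^k, \lambda^k)$ on total and base space (see the paragraphs preceding \cref{Prop: identifying lambda operations} and \cref{lem: sixth step}), there is nothing further to check: the naturality of $\mathrm{can}$ and of $\partial$ with respect to morphisms of fibre sequences is standard, and both squares commute on the nose.
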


\begin{proof}
This follows from the functoriality of the long exact sequence of homotopy groups.
\end{proof}

By induction on $r$, we define $K_r^\mathrm{Gr}(\cM_n)$ to be the cokernel of the induced split injective homomorphism $\Delta_n \colon K_{r-1}^\mathrm{Gr}(C^q\cM_n) \rightarrow K_{r-1}^\mathrm{Gr}(B^q\cM_n)$ with $K_0^\mathrm{Gr}$ defined to be the classical Grothendieck group functor $K_0$. This amounts to the description of Quillen's $r^\mathrm{th}$ $K$-group $K_r(\cM_n)$ given in \cite[Corollary~7.4]{Grayson2012}. Note that, contrary to \cite{Grayson2012}, categories of complexes such as $C^q\cM_n$ and $B^q\cM_n$ are defined in this paper using complexes which are supported only in non-negative degrees. However, \cite[Proposition~1.4]{HKT17} states that this doesn't entail any difference for $K_r^\mathrm{Gr}(\cM_n)$, see also \cref{Sec: non-negative complexes}. The induced assemblies of operations on the categories $C\cM_n$ and $B\cM_n$ already considered above inductively induce a map
\begin{equation}\label{Eq: New lambda operation}
\lambda^k \colon K_{r}^\mathrm{Gr}(\cM_n) \rightarrow K_{r}^\mathrm{Gr}(\cM_{nk})
\end{equation}
for every $n\ge 0$, $k\ge 1$ and $r \ge 0$, which is a homomorphism when $r\ge 1$. To see this we use \cite[Theorem~6.2]{HKT17} which also applies to the more general situation when the standard assembly of power operations on the category $\cP(X)$ there is replaced with an arbitrary assembly of power operations. The required short exact sequences~(6.4) for the base step in the inductive proof there indeed follow from the properties of an assembly of power operations (see \cref{def: power operations}).

\medskip

We are now ready to state and prove our main theorem.

\begin{thm}\label{Thm: main thm}
For any $n\ge 0, k\ge 1$ and $r \ge 0$, the following diagram of maps commutes:
\[\xymatrix{
K_r(\cM_n) \ar[r]^{\lambda^k} \ar[d]^\cong & K_r(\cM_{nk}) \ar[d]^\cong\\
K_r^\mathrm{Gr}(\cM_n) \ar[r]^{\lambda^k} & K_r^\mathrm{Gr}(\cM_{nk})
}\]
Here, the vertical bijections are given by \cite[Corollary~7.4]{Grayson2012} and the top and bottom operations are given by \eqref{Eq: Old lambda operation} and \eqref{Eq: New lambda operation}, respectively.
\end{thm}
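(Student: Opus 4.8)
The plan is to argue by induction on $r$, realising the isomorphism $K_r(\cM_n)\xrightarrow{\ \sim\ }K_r^{\mathrm{Gr}}(\cM_n)$ of \cite[Corollary~7.4]{Grayson2012} as assembled from maps each of which respects the $\lambda^k$-operations. For $r=0$ there is nothing to do beyond the classical case: both \eqref{Eq: Old lambda operation} and \eqref{Eq: New lambda operation} specialise to the usual operation $\lambda^k$ attached to the functor $\bigwedge^k\colon\cM_n\to\cM_{nk}$ on Grothendieck groups -- for the former by \cite[Section~8]{GraysonExterior}, for the latter because $K_0^{\mathrm{Gr}}=K_0$ by definition and the induced operation on $K_0$ is the classical one (cf.\ \cite[Section~6]{HKT17}) -- and the vertical map is the identity.

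Now fix $r\ge1$ and assume the theorem in degree $r-1$ for every assembly of power operations satisfying the two standing hypotheses of this section. Recall from (the proof of) \cite[Corollary~7.4]{Grayson2012}, rephrased with spaces in place of spectra just as in the lemmas above, that the isomorphism $K_r(\cM_n)\xrightarrow{\ \sim\ }K_r^{\mathrm{Gr}}(\cM_n)$ is assembled from the zigzag of continuous maps
\[|iG\cM_n|\to|qGC\cM_n|\leftarrow|qGC^\chi\cM_n|\xleftarrow{\top_n}|qGB^b\cM_n|\to\mathrm{hofib}(\varepsilon_n)\to\mathrm{hofib}(\bot_n)\]
studied in \cref{lem: first step}, \cref{lem: second step}, \cref{lem: third step}, \cref{Prop: identifying lambda operations} and \cref{lem: fifth step}, together with the isomorphism $\pi_r(\mathrm{hofib}(\bot_n))\xrightarrow{\ \sim\ }\pi_{r-1}(\mathrm{hofib}(\Delta_n))$ of \cref{lem: sixth step} and the identification, via the long exact sequence of $\mathrm{hofib}(\Delta_n)\to|qGC\cM_n|\xrightarrow{\Delta_n}|qGB\cM_n|$ and the splitting $\bot_n\circ\Delta_n=\mathrm{id}$, of $\pi_{r-1}(\mathrm{hofib}(\Delta_n))$ with $\mathrm{coker}\big(\Delta_n\colon K_{r-1}(C^q\cM_n)\to K_{r-1}(B^q\cM_n)\big)$ (with $K_{r-1}$ Quillen's $K$-group); this last move is where the homotopy cofibres of spectra in \cite{Grayson2012} get replaced by homotopy groups of homotopy fibres of spaces, as announced in the introduction. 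Each constituent map intertwines \eqref{Eq: Old lambda operation} on its source with the corresponding operation $\lambda^k$ on its target, by the lemmas just cited together with the functoriality of the long exact homotopy sequence under $\lambda^k$; hence the isomorphism of \cite[Corollary~7.4]{Grayson2012} carries \eqref{Eq: Old lambda operation} to the operation on $\mathrm{coker}\big(K_{r-1}(C^q\cM_n)\to K_{r-1}(B^q\cM_n)\big)$ induced from the operations \eqref{Eq: Old lambda operation} on $K_{r-1}(C^q\cM_n)$ and $K_{r-1}(B^q\cM_n)$.

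To finish the induction it suffices to identify this cokernel, with its operation, with $K_r^{\mathrm{Gr}}(\cM_n)$ together with \eqref{Eq: New lambda operation}. The exact categories $C^q\cM_n$ and $B^q\cM_n$, $n\ge0$, are idempotent complete, carry the induced assemblies of power operations of \cref{Prop: DoldPuppeOperations II} and \cref{cor: Dold-Puppe operations III}, and satisfy the two standing hypotheses: for $B^q\cM_n$ the second one follows from the identity $C(B^q)^s(B^q\cM_n)=C(B^q)^{s+1}\cM_n$, so that the cases $s\ge1$ of the second hypothesis for $\cM_n$ enter precisely here, cf.\ \cref{cor: Dold-Puppe operations IV} (the case of $C^q\cM_n$ is analogous). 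The diagonal functor $\Delta_n$ is exact and compatible with the tensor products and power operations, since the underlying objects of $\bigwedge^k$ do not depend on the differentials (\cite[Lemma~4.2]{HKT17}), and the isomorphism of \cite[Corollary~7.4]{Grayson2012} is natural in the exact category. Applying the inductive hypothesis to $C^q\cM_n$ and $B^q\cM_n$ thus gives a $\Delta_n$-compatible square of $\lambda^k$-compatible isomorphisms from $\big(K_{r-1}(C^q\cM_n)\to K_{r-1}(B^q\cM_n)\big)$ to $\big(K_{r-1}^{\mathrm{Gr}}(C^q\cM_n)\to K_{r-1}^{\mathrm{Gr}}(B^q\cM_n)\big)$; passing to cokernels -- which carry well-defined $\lambda^k$-operations by \cite[Theorem~6.2]{HKT17}, as recalled after \cref{lem: sixth step} -- yields the required identification with $K_r^{\mathrm{Gr}}(\cM_n)$ and \eqref{Eq: New lambda operation}, completing the induction.

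I expect the main obstacle to be twofold. First, one must check uniformly, across every exact category arising in the induction, that the two standing hypotheses pass to its auxiliary categories $C^q$ and $B^q$; this is exactly the point where the cases $s\ge1$ of the second hypothesis are consumed, and it rests on the bookkeeping of the first two sections. Second, one must verify that the homotopy-commutative diagrams of the lemmas genuinely assemble into the isomorphism of \cite[Corollary~7.4]{Grayson2012} and that passing to homotopy groups, to homotopy fibres and to cokernels introduces no basepoint or sign discrepancies -- in particular keeping careful track of the whisker maps of \cref{Prop: identifying lambda operations}, of the connecting homomorphism of \cref{lem: sixth step}, and of the edge case $r=1$.
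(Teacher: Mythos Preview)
Your overall architecture matches the paper's: induction on $r$, with the isomorphism of \cite[Corollary~7.4]{Grayson2012} factored through the chain of maps in \cref{lem: first step}--\cref{lem: sixth step}, and the inductive hypothesis applied to the induced assembly on $B^q\cM_n$.

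There is, however, one step whose justification as you state it does not work. You claim that $\pi_{r-1}(\mathrm{hofib}(\Delta_n))$ is identified with $\mathrm{coker}\bigl(\Delta_n\colon K_{r-1}(C^q\cM_n)\to K_{r-1}(B^q\cM_n)\bigr)$ ``via the long exact sequence of $\mathrm{hofib}(\Delta_n)\to|qGC\cM_n|\xrightarrow{\Delta_n}|qGB\cM_n|$ and the splitting $\bot_n\circ\Delta_n=\mathrm{id}$''. But that long exact sequence, together with the splitting, only gives
\[
\pi_{r-1}(\mathrm{hofib}(\Delta_n))\ \cong\ \mathrm{coker}\bigl(\pi_r(|qGC\cM_n|)\to \pi_r(|qGB\cM_n|)\bigr),
\]
and $\pi_r(|qGC\cM_n|)$, $\pi_r(|qGB\cM_n|)$ are \emph{not} $K_{r-1}(C^q\cM_n)$, $K_{r-1}(B^q\cM_n)$. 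The paper closes this gap by a \emph{second} use of Waldhausen's fibration theorem (\cref{Thm: WaldFib}), producing the fibre sequences
\[
|iGC^q\cM_n|\to|iGC\cM_n|\to|qGC\cM_n|\quad\text{and}\quad |iGB^q\cM_n|\to|iGB\cM_n|\to|qGB\cM_n|,
\]
and then a diagram chase comparing their connecting homomorphisms with those for $\Delta_n$; the chase uses that $\Delta_n\colon\pi_{r-1}(|iGC\cM_n|)\to\pi_{r-1}(|iGB\cM_n|)$ is an isomorphism (both sides being $\bigoplus_{i\in\NN}K_{r-1}(\cM_n)$, cf.\ \cite[Theorem~4.3]{Grayson2012}). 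Only through this diagram does the map $\pi_{r-1}(\mathrm{hofib}(\Delta_n))\to K_r^{\mathrm{Gr}}(\cM_n)$ emerge, and its compatibility with $\lambda^k$ then reduces, again by diagram chase, to \cref{Prop: Step 0} together with the inductive hypothesis for the assembly on $B^q\cM_n$. In particular, the inductive hypothesis for $C^q\cM_n$ is not invoked in the paper. Your closing remarks anticipate exactly this sort of subtlety, but the concrete mechanism you describe for that step is not the right one.
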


\begin{proof}
We proceed by induction on $r$. The statement for $r=0$ is true by definition.

For the inductive step $r-1 \rightarrow r$ where $r \ge 1$, we first note that passing to the $r^\mathrm{th}$ homotopy group turns the vertical maps in \cref{lem: first step}, \cref{lem: second step}, \cref{lem: third step}, \cref{Prop: identifying lambda operations} and \cref{lem: fifth step}  into isomorphisms, as stated there. Furthermore, a diagram chase as in the proof of \cite[Corollary~4.5]{Grayson2012} shows that the vertical composition in \cref{lem: sixth step} is an isomorphism. Pre-composing the composition of all these isomorphisms with the isomorphism induced by the vertical homotopy equivalence from \cref{Prop: Step 0}
and post-composing with the isomorphism \eqref{Eq: final isom} below (introducing which is the object of the next paragraph) then yields the vertical isomorphism in \cref{Thm: main thm}, see \cite{Grayson2012}.  By \cref{Prop: Step 0}, \cref{lem: first step}, \cref{lem: second step}, \cref{lem: third step}, \cref{Prop: identifying lambda operations}, \cref{lem: fifth step} and \cref{lem: sixth step}, it therefore remains to show that the isomorphism \eqref{Eq: final isom} below is compatible with $\lambda^k$ (which will be done in the paragraph right after \eqref{Eq: final isom}).

Another application of Waldhausen's fibration theorem (see \cref{Thm: WaldFib}) yields that the horizontal sequences in the following commutative diagram of continuous maps are fibre sequences:
\[\xymatrix{
|iGC^q\cM_n| \ar[r] \ar[d]^{\Delta_n} & |iGC\cM_n| \ar[r] \ar[d]^{\Delta_n} & |qGC\cM_n| \ar[d]^{\Delta_n}\\
|iGB^q\cM_n| \ar[r] & |iGB\cM_n| \ar[r] & |qGB\cM_n|
}\]
Here, all vertical maps are induced by the functor $(V.,d) \mapsto (V.,d,d)$. This diagram induces the following commutative diagram of homomorphisms between homotopy groups, with exact rows and split-exact columns:
\[\xymatrix{
\pi_r(|qGC\cM_n|) \ar[r]^{\partial} \ar@{>->}[d]^{\Delta_n} & \pi_{r-1}(|iGC^q\cM_n|) \ar[r] \ar@{>->}[d]^{\Delta_n} & \pi_{r-1}(|iGC\cM_n|) \ar@{>->}[d]^{\Delta_n} \\
\pi_r(|qGB\cM_n|) \ar[r]^{\partial} \ar@{->>}[d]^{\partial} & \pi_{r-1}(|iGB^q \cM_n|) \ar[r] \ar@{->>}[d]^{\partial} & \pi_{r-1}(|iGB\cM_n|) \\
\pi_{r-1}(\mathrm{hofib}(\Delta_n)) \ar[r] & K_{r}^\mathrm{Gr}(\cM_n)
}\]
Note that the target of the connecting homomorphism $\partial$ in the middle column of this diagram a priori is the $(r-2)^\mathrm{th}$ homotopy group of the homotopy fibre of the middle vertical morphism $\Delta_n$ in the diagram above; but that homotopy group is isomorphic to $K_r^\mathrm{Gr}(\cM_n)$ due to the identifications
\[\pi_{r-1}(|iGC^q\cM_n|) \cong \pi_{r-1}(|gC^q\cM_n|) = K_{r-1}(C^q\cM_n)\cong K^\mathrm{Gr}_{r-1}(C^q\cM_n)\]
and
\[\pi_{r-1}(|iGB^q\cM_n|) \cong \pi_{r-1}(|gB^q\cM_n|) = K_{r-1}(B^q\cM_n)\cong K^\mathrm{Gr}_{r-1}(B^q\cM_n)\]
given by \cref{Prop: Step 0} and \cite[Corollary~7.4]{Grayson2012}. The right-hand vertical homomorphism in this diagram is bijective because source and target are isomorphic to $\oplus_{i \in \NN} K_{r-1}(\cM_n)$, as explained in the proof of \cite[Theorem~4.3]{Grayson2012}. A straightforward diagram chase then shows that the lower horizontal homomorphism
\begin{equation}\label{Eq: final isom}
\pi_{r-1}(\mathrm{hofib}(\Delta_n)) \rightarrow K_{r}^\mathrm{Gr}(\cM_n)
\end{equation}
is bijective as well. It is also straightfoward to verify that this isomorphism is the same as the one obtained by applying the $r^\mathrm{th}$ homotopy group functor to (the proof of) \cite[Theorem~4.3]{Grayson2012}.

It remains to prove that the following diagram of homomorphisms between homotopy groups commutes:
\[\xymatrix{
\pi_{r-1}(\mathrm{hofib}(\Delta_n)) \ar[r]^{\lambda^k} \ar[d] & \pi_{r-1}(\mathrm{hofib}(\Delta_{nk})) \ar[d] \\
K_r^\mathrm{Gr}(\cM_n) \ar[r]^{\lambda^k} & K_r^\mathrm{Gr}(\cM_{nk})
}\]
By an easy diagram chase, this immediately follows from the statement that the second identification above commutes with $\lambda^k$, which in turn follows from \cref{Prop: Step 0} and the inductive hypothesis applied to the assembly of power operations on the categories $B^q\cM_n$. Note that the proof of \cite[Lemma~3.4(1)]{HKT17} shows that the categories $B^q\cM_n$ are idempotent complete again, as assumed in the first bullet point at the beginning of this section. The assumption in the second bullet point has been formulated in a way so it automatically holds for $\cM_n$ replaced with $B^q\cM_n$ as well.

This finishes the proof of \cref{Thm: main thm},
\end{proof}

Let $X$ be a quasi-compact scheme. Applying the main outcome of \cite{GraysonExterior} to the assembly of power operations given by the usual tensor product and exterior powers in the category $\cP(X)$ of locally free $\cO_X$-modules of finite rank, we obtain the exterior power operation
\[\lambda^k: K_r(X) \rightarrow K_r(X)\]
for any $r \ge 0$ and $k \ge 1$. In the following corollary, we identify $K_r^\mathrm{Gr}(X)$ with $K_r(X)$ via \cite[Corollary~7.4]{Grayson2012}.

\begin{cor}\label{cor: comparing exterior power operations}
Let $r \ge 0$ and $k \ge 1$. The exterior power operation~$\lambda^k$ on $K_r(X)$ constructed in \cite{GraysonExterior} agrees with the one defined in \cite[Theorem~6.2]{HKT17}.
\end{cor}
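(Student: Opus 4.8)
The plan is to deduce \cref{cor: comparing exterior power operations} directly from \cref{Thm: main thm} by checking that the standard assembly of power operations on $\cP(X)$ satisfies the running hypotheses of \cref{sec: comparing lambda operations}. First I would recall that, by \cref{prop: exterior powers of quasi-isom}, the assumption of \cref{cor: Dold-Puppe operations IV} holds for this assembly; that is precisely the second bullet point at the beginning of \cref{sec: comparing lambda operations}, for all $s \ge 0$, $n \ge 0$, $k \ge 1$. Moreover, the category $\cP(X)$ is idempotent complete (a direct summand of a locally free module of finite rank on a quasi-compact scheme is again locally free of finite rank), which gives the first bullet point. Hence \cref{Thm: main thm} applies verbatim to $\cM_n := \cP(X)$ for every $n$, $k$, $r$.

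With that in place, the two sides of the claimed equality are identified as follows. On the one hand, the operation $\lambda^k$ on $K_r(X) = K_r(\cP(X))$ from \cite{GraysonExterior} is by construction the operation \eqref{Eq: Old lambda operation} for the standard assembly, as recalled in the paragraph after \cref{Prop: Step 0} and just before the statement of the corollary. On the other hand, the operation \eqref{Eq: New lambda operation} on $K_r^{\mathrm{Gr}}(X)$ is, by the remark following \cref{Eq: New lambda operation} (invoking \cite[Theorem~6.2]{HKT17} in the generality of an arbitrary assembly of power operations), exactly the operation defined combinatorially in \cite[Theorem~6.2]{HKT17} when the assembly is the standard one on $\cP(X)$; here one uses that in this special case the operations agree with those of \cite[Section~6]{HKT17}, as noted in the proof of \cref{Prop: DoldPuppeOperations II}. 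The commutative square of \cref{Thm: main thm}, whose vertical maps are the bijections of \cite[Corollary~7.4]{Grayson2012} used to identify $K_r^{\mathrm{Gr}}(X)$ with $K_r(X)$, then states precisely that these two operations correspond under that identification, which is the assertion of the corollary.

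The only substantive point to verify is that the hypotheses of \cref{sec: comparing lambda operations} are genuinely met, and the nontrivial part of that is \cref{prop: exterior powers of quasi-isom}, which has already been established. Beyond this there is essentially nothing to prove: the corollary is a translation of \cref{Thm: main thm} into the language of schemes, together with a bookkeeping check that ``the assembly of power operations in \cite[Theorem~6.2]{HKT17}'' and ``the operation \eqref{Eq: New lambda operation}'' name the same map, and likewise for ``the operation of \cite{GraysonExterior}'' and \eqref{Eq: Old lambda operation}. I would therefore present the proof as: (i) observe $\cP(X)$ is idempotent complete; (ii) cite \cref{prop: exterior powers of quasi-isom} for the second hypothesis; (iii) apply \cref{Thm: main thm} and unwind the two vertical operations via the identifications already recorded in the text. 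I expect no real obstacle; the main thing to be careful about is simply making the match-up of definitions explicit so that the reader sees the corollary is not circular.
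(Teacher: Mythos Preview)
Your proposal is correct and matches the paper's own proof essentially step for step: verify idempotent completeness of $\cP(X)$, invoke \cref{prop: exterior powers of quasi-isom} for the second hypothesis, apply \cref{Thm: main thm}, and identify the two operations \eqref{Eq: Old lambda operation} and \eqref{Eq: New lambda operation} with the constructions of \cite{GraysonExterior} and \cite[Theorem~6.2]{HKT17}, respectively. The paper's proof is merely more terse.
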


\begin{proof}
This is a special case of the previous theorem. To see this, we observe that the category $\cP(X)$ is idempotent complete and that the assumption in the second bullet point at the beginning of this section is satisfied by \cref{prop: exterior powers of quasi-isom}. Note that the map $\lambda^k: K_r^\mathrm{Gr}(\cP(X)) \rightarrow K_r^\mathrm{Gr}(\cP(X))$ is indeed the same as the one constructed and denoted $\lambda^k: K_r(X) \rightarrow K_r(X)$ in \cite[Theorem~6.2]{HKT17}.
\end{proof}

\section{Comparison with other operations}\label{sec: comparison with other operations}

In this section we apply the method of \cite[Section~5.2.2]{Zan19} to compare the exterior power operations considered in \cref{cor: comparing exterior power operations} with the ones defined by Zanchetta \cite{Zan21}, Riou \cite{RRR}, Gillet-Soul\'e \cite{GS} and Barwick-Glasman-Mathew-Nikolaus \cite{Barwick}.

\medskip

All schemes considered in this section are assumed to be Noetherian and of finite Krull dimension. We fix a divisorial (\cite[2.1.1]{ThomTro}) regular base scheme $S$.

\medskip

Let $\schs$ denote the category of divisorial schemes of finite type over $S$ and let $\pre(\schs)$ (resp.\ $\spre(\schs)$) denote the category of presheaves (resp.\ simplicial presheaves) of sets over $\schs$. We define $K \in \spre(\schs)$ to be the simplicial presheaf which associates the simplicial set $g\cP(X)$ (see \cref{sec: operations on spaces}) to any scheme~$X$. We view $\spre(\schs)$ as a model category where the cofibrations and the weak equivalences are the sectionwise cofibrations and weak equivalences, respectively. This is called the \emph{global injective model structure}. When applied to the standard assembly of power operations on $\cP(X)$ for any $X \in \schs$, the operations $\lambda^k$ constructed in \cite{GraysonExterior} and recalled in \cref{sec: operations on spaces} are functorial in $X$ and therefore define morphisms $\lambda^k \colon K\rightarrow K$
in $\Ho(\spre(\schs))$, the associated homotopy category.

As the Zariski topology is a Grothendieck topology on $\schs$, we can define a new model category structure on $\spre(\schs)$ having the same cofibrations but where the weak equivalences are the so-called local weak equivalences with respect to the Zariski topology (for more details see \cite[Sections~2--4]{J15}). We
denote the resulting model category by $\spre_\mathrm{Zar}(\schs)$. As this model structure is obtained as a left Bousfield localisation from the global injective one (see \cite[Remark~3.1.4]{AHW17}),
we get a functor $\Ho(\spre(\schs))\rightarrow \Ho(\spre_\mathrm{Zar}(\schs))$ and our operations $\lambda^k$ are sent by this morphism to operations
\[\lambda^k:K\rightarrow K\quad\mathrm{in}\quad \Ho(\spre_\mathrm{Zar}(\schs)).\]

Note all the definitions and constructions given so far in this section also apply to the full subcategory $\sms\subseteq\schs$ of smooth schemes.

Recall that, for divisorial schemes, Thomason's $K$-theory of perfect complexes is weakly equivalent to the $g$-construction, see \cite[Theorem~3.1]{GG} and \cite[Proposition~3.10]{ThomTro}. In the following theorem, we accordingly identify the object $K \in \Ho(\spre_\mathrm{Zar}(\schs))$ with the $K$-theory presheaf as defined in \cite[Section~2]{Zan21}.

\begin{thm}\label{thm: comparison with Zanchetta and Riou}
Let $k \ge 1$. The operation $\lambda^k\in\Homm_{\Ho(\spre_\mathrm{Zar}(\schs))}(K,K)$ agrees with the one defined in \cite[Theorem~7.1]{Zan21}. In particular, for any $X \in \sms$ and $r \ge 0$, the operation $\lambda^k$ on $K_r(X)$ considered in \cref{cor: comparing exterior power operations} agrees with the one defined in \cite[Theorem~1.1.1]{RRR}.
\end{thm}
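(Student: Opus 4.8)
The plan is to exploit the main theorem of \cite{Zan21} (its \cite[Theorem~7.1]{Zan21}), which asserts that the natural restriction map
\[
\Homm_{\Ho(\spre_\mathrm{Zar}(\schs))}(K,K) \longrightarrow \Homm_{\Ho(\spre_\mathrm{Zar}(\schs))}(K_0, K_0)
\]
(evaluation on the degree-zero part, or equivalently on the Grothendieck group functor) is a bijection, or at least injective enough that two operations agreeing on $K_0$ must coincide. So the strategy is: show that both the operation $\lambda^k$ constructed here via Grayson's machinery and the operation $\lambda^k$ of \cite[Theorem~7.1]{Zan21} are genuine elements of $\Homm_{\Ho(\spre_\mathrm{Zar}(\schs))}(K,K)$, and then check that they restrict to the \emph{same} map on $K_0$. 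Since on $K_0$ every reasonable construction of exterior power operations reduces to the classical one on Grothendieck groups (this was already recorded at the end of \cref{sec: operations on spaces}, where it is noted that for $r=0$ the operation agrees with the standard classical $\lambda^k$), the agreement on $K_0$ is essentially automatic.

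Concretely, I would proceed in the following steps. First, verify that the $\lambda^k$ of \eqref{Eq: morphism lambda}, applied sectionwise to the standard assembly of power operations on $\cP(X)$, is functorial in $X \in \schs$; this is already asserted just before the theorem and follows from the fact that Grayson's $G$-construction and the map $\lambda^k$ are natural in the exact category, together with the fact that pullback $\cP(Y) \to \cP(X)$ along a morphism $X \to Y$ commutes with $\otimes$ and $\wedge$ up to the coherent natural isomorphisms that an assembly of power operations carries. Hence we get a well-defined morphism $\lambda^k \colon K \to K$ in $\Ho(\spre(\schs))$ and, after Bousfield localisation, in $\Ho(\spre_\mathrm{Zar}(\schs))$. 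Second, recall from \cite{Zan21} that the operation defined there is also such a morphism, and that the bijectivity result in \emph{loc.\ cit.}\ identifies $\Homm_{\Ho(\spre_\mathrm{Zar}(\schs))}(K,K)$ with operations on $K_0$. Third, compute the restriction to $K_0$ of both operations: for the operation constructed here this is the classical $\lambda^k$ on $K_0(X)$ by the remark at the end of \cref{sec: operations on spaces} (the case $r=0$), and for the operation of \cite{Zan21} it is by construction the classical $\lambda^k$ as well. Therefore the two operations have the same image under the injective restriction map and hence coincide in $\Ho(\spre_\mathrm{Zar}(\schs))$. Finally, evaluating this equality of morphisms of simplicial presheaves at any $X \in \sms$ and passing to $\pi_r$ yields the equality of operations on $K_r(X)$; since \cite[Theorem~1.1.1]{RRR} is recovered from \cite[Theorem~7.1]{Zan21} by restriction to $\sms$ (this is how \cite{Zan21} generalises \cite{RRR}), the ``in particular'' clause follows immediately.

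The main obstacle, and the step deserving the most care, is the \emph{functoriality} in step one — namely checking that the zigzag \eqref{Eq: morphism lambda} defining $\lambda^k$ genuinely assembles into a morphism of simplicial presheaves in the homotopy category, not merely a sectionwise collection of homotopy classes. The subtlety is that \eqref{Eq: morphism lambda} involves inverting the homotopy equivalences $|wG\cM| \leftarrow |\mathrm{Sub}_k wG\cM|$ and $|wG\cM_{nk}| \to |wG^k\cM_{nk}|$ of \cref{Thm: Homotopy Equivalences}, and one must know these inversions can be performed coherently over all $X$ at once; this is handled by the fact that all the maps in \cref{Thm: Homotopy Equivalences} and in \cref{Prop: Step 0} are themselves natural in the exact category, so the whole zigzag is a zigzag of morphisms of simplicial presheaves with the backward arrows being sectionwise (hence global, hence local) weak equivalences, which become isomorphisms in $\Ho(\spre_\mathrm{Zar}(\schs))$. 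Once this bookkeeping is in place, everything else is formal: the agreement on $K_0$ is classical, and the injectivity of the restriction map is exactly the content of the cited theorem of \cite{Zan21}. In particular no new computation with Grayson's isomorphisms is needed here — that work was already done in \cref{Thm: main thm} — and the present theorem is genuinely a consequence of the representability/rigidity theorem of \cite{Zan21}.
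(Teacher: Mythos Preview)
Your proposal is correct and follows essentially the same route as the paper: both arguments reduce the comparison to the rigidity statement of \cite{Zan21} (the paper cites \cite[Theorems~3.2 and~4.1]{Zan21} rather than Theorem~7.1 for the injectivity) after checking that Grayson's $\lambda^k$ induces the classical operation on $K_0$ via \cite[Section~8]{GraysonExterior}. The one point the paper makes explicit that you leave implicit is the identification of the presheaf $\pi_0 K = \Homm_{\Ho(\spre_\mathrm{Zar}(\schs))}(-,K)$ with $K_0$, which requires Zariski descent for $K$-theory of perfect complexes \cite[Theorems~8.1 and~10.8]{ThomTro}; without this, ``agreement on $K_0$'' does not literally mean agreement under the restriction map used in the rigidity theorem.
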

\begin{proof}
By viewing a scheme as a constant simplicial presheaf using the Yoneda embedding, we obtain the presheaf
\[\pi_0 K:=\Homm_{\Ho(\spre_\mathrm{Zar}(\schs))}(-,K)\in\pre(\schs).\]
As the $K$-theory of perfect complexes satisfies Zariski descent (see \cite[Theorems~8.1 and~10.8]{ThomTro}), we have that $\pi_0K$ coincides with~$K_0$.
Furthermore, by construction and by \cite[Section~8]{GraysonExterior}, the operation $\pi_0(\lambda^k)$ then coincides with the usual lambda operation $\lambda^k \colon K_0\rightarrow K_0$.
The statement now follows from \cite[Theorems~3.2 and~4.1]{Zan21}.
\end{proof}

\begin{cor}\label{cor: comparing with Gillet-Soule}
For any $X \in \schs$ and any $r \ge 0$ and $k \ge 1$, the operation~$\lambda^k$ on $K_r(X)$ considered in \cref{cor: comparing exterior power operations} agrees with the ones constructed in \cite[Theorem~3]{GS} and in \cite[Theorem~1.1]{Barwick}.
\end{cor}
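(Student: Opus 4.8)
The plan is to deduce this from the rigidity theorem of \cite{Zan21} (\cite[Theorems~3.2 and~4.1]{Zan21}) in the same way as \cref{thm: comparison with Zanchetta and Riou}: a morphism $K \to K$ in $\Ho(\spre_\mathrm{Zar}(\schs))$ is determined by the operation it induces on $\pi_0 K = K_0$. Since, by \cref{thm: comparison with Zanchetta and Riou} together with \cref{cor: comparing exterior power operations}, the operation $\lambda^k$ of \cref{cor: comparing exterior power operations} is the one of \cite[Theorem~7.1]{Zan21} and induces the classical exterior power operation on $K_0$, it suffices to show that the Gillet--Soul\`e operation of \cite[Theorem~3]{GS} and the Barwick--Glasman--Mathew--Nikolaus operation of \cite[Theorem~1.1]{Barwick}, when specialised to exterior powers on perfect complexes, each give a morphism $K \to K$ in $\Ho(\spre_\mathrm{Zar}(\schs))$ inducing the classical $\lambda^k$ on $K_0$. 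Then all three operations coincide in $\Ho(\spre_\mathrm{Zar}(\schs))$, hence on every group $K_r(X)$, and by \cref{cor: comparing exterior power operations} they agree with the operation of \cite{HKT17} as well.

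For the Gillet--Soul\`e operation I would first observe that it is defined on Thomason's $K$-theory of perfect complexes and is natural in the scheme; via the weak equivalence between that $K$-theory and the $g$-construction $g\cP(X)$ for divisorial schemes (\cite[Theorem~3.1]{GG}, \cite[Proposition~3.10]{ThomTro}) it therefore defines a morphism $K \to K$ in $\Ho(\spre(\schs))$, which is pushed forward to $\Ho(\spre_\mathrm{Zar}(\schs))$ exactly as in the construction preceding \cref{thm: comparison with Zanchetta and Riou}. On $\pi_0 K$, which coincides with $K_0$ by Zariski descent for the $K$-theory of perfect complexes (\cite[Theorems~8.1 and~10.8]{ThomTro}), the induced operation is the classical $\lambda^k$ by construction (cf.\ \cite{Sou85}). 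Hence the rigidity theorem identifies it with the operation of \cite[Theorem~7.1]{Zan21}.

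For the Barwick--Glasman--Mathew--Nikolaus operation I would apply their construction to the exterior power functors $\Lambda^k$ on the stable $\infty$-categories $\mathrm{Perf}(X)$ of perfect complexes. Since $\Lambda^k$ is a polynomial functor of stable $\infty$-categories, the resulting operation is functorial in $X$; transporting it along the equivalence with $g\cP(X)$ and passing to $\Ho(\spre_\mathrm{Zar}(\schs))$ as above yields a morphism $K \to K$ there. On $\pi_0 K = K_0$, every class is a difference of classes of locally free modules (as $X$ has an ample family of line bundles), and the operation sends $[V]$ to $[\Lambda^k V]$; thus it is again the classical $\lambda^k$, and the rigidity theorem gives the claim.

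The step I expect to be the main obstacle is establishing that these two operations genuinely live in $\Ho(\spre_\mathrm{Zar}(\schs))$, i.e.\ matching the homotopy-coherent functoriality of the construction in \cite{Barwick} (and the simplicial-scheme formulation in \cite{GS}) with the strictly functorial presheaf $K$ used here; once this is in place the computation of the induced operation on $\pi_0 = K_0$ is routine, and the conclusion is immediate from \cite[Theorems~3.2 and~4.1]{Zan21}.
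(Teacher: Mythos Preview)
Your proposal is correct and follows essentially the same route as the paper: both arguments reduce to the rigidity statement \cite[Theorems~3.2 and~4.1]{Zan21}, so that it suffices to check that the Gillet--Soul\'e and Barwick--Glasman--Mathew--Nikolaus constructions define endomorphisms of $K$ in $\Ho(\spre_\mathrm{Zar}(\schs))$ inducing the classical $\lambda^k$ on $K_0$. The only presentational difference is that for the Gillet--Soul\'e comparison the paper works directly with the model $K\cong\mathbb{Z}\times\mathrm{BGL}^+$ (which is how \cite{GS} actually build their operations), whereas you phrase it via Thomason's $K$-theory of perfect complexes; the two are equivalent, and your honest flag about matching the coherent functoriality with the strict presheaf is exactly the point the paper addresses by invoking that their maps already live in $\Ho(\spre_\mathrm{Zar}(\schs))$.
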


\begin{proof}
Recall that $K \cong\mathbb{Z}\times\mathrm{BGL}^+$ in $\Ho(\spre_\mathrm{Zar}(\schs))$, see \cite[Theorem 3.1]{GG} and \cite[Lem\-ma~18]{GS}. Although a different terminology is used in \cite{GS}, their operations arise from maps $\mathbb{Z}\times\mathrm{BGL}^+\rightarrow\mathbb{Z}\times\mathrm{BGL}^+$ in $\Ho(\spre_\mathrm{Zar}(\schs))$ inducing on $K_0$ the usual lambda operation $\lambda^k$. Hence, the previous theorem and \cite[Theorems 3.2 and 4.1]{Zan21} imply that the operation~$\lambda^k$ on $K_r(X)$ considered in \cref{cor: comparing exterior power operations} agrees with the one constructed in \cite[Theorem~3]{GS}, as claimed.

When applied to the polynomial functor $\lambda^k$ on the category $\mathrm{Perf}(X)$ of perfect complexes on $X$, Theorem~1.1 in \cite{Barwick} yields an operation on the $K$-theory space of $X$ which is functorial in $X$ and which agrees with the usual operation $\lambda^k$ on $K_0(X)$ after passing to $\pi_0$. In particular, we obtain an operation $\lambda^k$ on $K \in \Ho(\spre(\schs))$ and then on $K \in \Ho(\spre_\mathrm{Zar}(\schs))$ as above. Hence, the same reasoning as above shows that the operation~$\lambda^k$ on $K_r(X)$ considered in \cref{cor: comparing exterior power operations} agrees with the one constructed in \cite[Theorem~1.1]{Barwick} as well.
\end{proof}

\begin{rem}
The operations defined in \cite{GS} (and also explained in \cite[Appendix~B]{HW}) are generalisation of the ones defined in \cite[Proposition~4]{Sou85} for regular schemes which in turn agree with the ones defined by Hiller \cite{Hiller} and Kratzer \cite{Kratzer} for affine schemes, see \cite[end of p.~512]{Sou85}.
\end{rem}

\section{The \texorpdfstring{$\lambda$}{lambda}-Ring Axioms for Higher Equivariant \texorpdfstring{$K$}{K}-Theory}\label{sec: Equivariant K-theory}

In \cite{GRR}, the first author applied Grayson's construction \cite{GraysonExterior} to define exterior power operations on higher \emph{equivariant} $K$-theory and conjectured that composition of these operations satisfies the relevant $\lambda$-ring axiom. The object of this section is to prove this conjecture and to thereby finish the proof that Grayson's operations define a $\lambda$-ring structure on the higher equivariant $K$-groups. To this end, we will apply \cref{Thm: main thm} and extend the reasoning of \cite[Section~8]{HKT17} to the equivariant setting. We begin by recalling some notations and facts from \cite{GRR}.

\medskip

Let $S$ be a Noetherian scheme, and let $G$ be a \emph{flat} group scheme over $S$. As usual, a \emph{$G$-scheme over $S$} is an $S$-scheme equipped with an $S$-morphism $m_X\colon G \times X \rightarrow X$ which is associative in the obvious sense. We fix $S$, $G$ and a $G$-scheme $X$ throughout this section.

\medskip

\newcommand\DistTo{\xrightarrow{
   \,\smash{\raisebox{-0.65ex}{\ensuremath{\scriptstyle\sim}}}\,}}

A \emph{$G$-module on $X$} is a pair consisting of an $\cO_X$-module $V$ and an isomorphism $m_V \colon m_X^* V \DistTo \mathrm{pr}_X^* V$ of $\cO_{G\times X}$-modules which satisfies the associativity property
\[(\mathrm{pr}_{2,3}^* m_V) \circ ((1 \times m_X)^*m_V) = (m_G \times 1)^* m_V.\]
Here, $m_G \colon G \times G \rightarrow G$ is the multiplication, and $\mathrm{pr}_X \colon G \times X \rightarrow X$ and $\mathrm{pr}_{2,3} \colon G\times G \times X \rightarrow G \times X$ denote the obvious projections. For example, if $G = \coprod_{\gamma \in \Gamma} S$ is the constant group scheme associated to an abstract group~$\Gamma$, then a $G$-module on $X$ is the same as an $\cO_X$-module $V$ together with isomorphisms $\gamma^* V \DistTo V$, $\gamma \in \Gamma$, which satisfy the usual associativity property. Homomorphisms between $G$-modules are defined in the obvious way. The category of $G$-modules which are coherent as $\cO_X$-modules (resp.\ locally free of finite rank over~$\cO_X$) will be denoted $\cM(G,X)$ (resp. $\cP(G,X)$). By \cite[Lemma~(1.3)]{GRR}, the category $\cM(G,X)$ is an abelian category and the subcategory $\cP(G,X)$ is an exact category.

\medskip

The ordinary non-equivariant tensor product and exterior powers of $G$-modules on~$X$ when equipped with the ``diagonal'' action of $G$ define an assembly of power operations on $\cP(G,X)$ as in the non-equivariant context, see explanations after \cref{def: power operations}. As recalled in \cref{sec: operations on spaces}, from \cite{GraysonExterior} we hence obtain an exterior power operation
\[\lambda^k \colon K_r(G,X) \rightarrow K_r(G,X)\]
on the $r^\mathrm{th}$ \emph{equivariant $K$-group} $K_r(G,X) := K_r(\cP(G,X))$ for any $k \ge 1$ and $r \ge 0$. Conjecture~(2.7) in \cite{GRR} predicts that composition of these exterior power operations obeys the usual lambda-ring axiom for composition, see \cite[I \S 1]{FL}. The following theorem proves this conjecture.

\medskip

Let $P_{k,l}$ denote the universal integral polynomial in $kl$ variables defined for example on page~5 of \cite{FL}. It is easy to derive from its definition that $P_{k,l}$ is homogeneous of degree $kl$ and hence just a multiple of $\lambda^{kl}$ if we discard all monomials in $P_{k,l}$ that are made up of more than one factor. When $r > 0$, this simplifies the right-hand side of the formula in the next theorem, as we define the multiplication on $K_r(G,X)$ to be the trivial one, as is standard in this context. More precisely, let $K(G,X) := \oplus_{r \ge 0} K_r(G,X)$; then, the product~$xy$ of any two elements $x \in K_r(G,X)$ and $y \in K_s(G,X)$ is induced by the tensor product if $r=0$ or $s=0$ and defined to be zero if $r>0$ and $s>0$. (The reader interested in a formula for $\lambda^k$ applied to the \emph{external} product $x \smallsmile y \in K_{r+s}(G,X)$ is referred to \cite[Theorem~1.5]{HK}.)

\medskip

Moreover, $\lambda^k$ canonically extends to the whole direct sum~$K(G,X)$, see \cite[Section~7]{HKT17}. Together with \cite[Propostion~(2.5)]{GRR}, the following theorem proves that $K(G,X)$ is a $\lambda$-ring (see \cite{FL}).

\begin{thm}\label{thm: conjecture}
Let $k,l \ge 1$, $r \ge 0$ and $x \in K_r(G,X)$. Then we have
\begin{equation}\label{eq: third lambda ring axiom}
\lambda^k (\lambda^l(x)) = P_{k,l}(\lambda^1 (x), \ldots, \lambda^{kl}(x))\quad \textrm{in} \quad K_r(G,X).
\end{equation}
\end{thm}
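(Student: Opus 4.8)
The plan is to reduce the equivariant statement to the combinatorial setting of \cite{HKT17}, where the identity \eqref{eq: third lambda ring axiom} can be verified directly. The key point is that \cref{Thm: main thm}, applied to the assembly of power operations on $\cP(G,X)$ described just above, identifies Grayson's operation $\lambda^k$ on $K_r(G,X) = K_r(\cP(G,X))$ with the combinatorially defined operation $\lambda^k$ on $K_r^{\mathrm{Gr}}(\cP(G,X))$. Hence it suffices to prove \eqref{eq: third lambda ring axiom} for the latter operations.

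First I would check that the category $\cP(G,X)$ together with its tensor product and exterior powers (with diagonal $G$-action) actually satisfies the hypotheses needed to invoke \cref{Thm: main thm}: namely that $\cP(G,X)$ is idempotent complete, and that the assumption in the second bullet point at the start of \cref{sec: comparing lambda operations} holds, i.e.\ that $\bigwedge^k$ of a quasi-isomorphism in each $C(B^q)^s\cP(G,X)$ is again a quasi-isomorphism. Idempotent completeness follows since $\cP(G,X)$ is an exact subcategory of the abelian category $\cM(G,X)$ closed under direct summands (using \cite[Lemma~(1.3)]{GRR}). For the quasi-isomorphism condition, the forgetful functor to $\cP(X)$ (for $X$ itself quasi-compact, after reducing to an affine cover of $X$ compatible with the $G$-action, or more directly since quasi-isomorphy can be checked on underlying $\cO_X$-complexes) reduces it to \cref{prop: exterior powers of quasi-isom}; exterior powers and the Dold--Puppe construction commute with the forgetful functor because the underlying modules of $\bigwedge^k$ do not depend on the equivariant structure.

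Next I would invoke \cref{Thm: main thm} to transport the problem to $K_r^{\mathrm{Gr}}(\cP(G,X))$, and then observe that the combinatorial operations there are built exactly as in \cite[Section~6]{HKT17} (see the discussion around \eqref{Eq: New lambda operation} and \cite[Theorem~6.2]{HKT17}), but for the exact category $\cP(G,X)$ in place of $\cP(X)$. The composition identity \eqref{eq: third lambda ring axiom} for these combinatorial operations is then proved by the method of \cite[Section~8]{HKT17}: one reduces, via Grayson's presentation, to checking the polynomial identity on explicit generators given by binary multicomplexes of $G$-modules, where everything is computed by the universal formulas for $\lambda$-operations on an honest $\lambda$-ring (the representation ring style computation), using that the base step short exact sequences from \cref{def: power operations} hold for the standard assembly on $\cP(G,X)$ just as for $\cP(X)$.

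The main obstacle I expect is making sure that the arguments of \cite[Section~8]{HKT17}, which are written for $\cP(X)$, genuinely go through verbatim for $\cP(G,X)$. The substance of that section is formal manipulation with the axioms of an assembly of power operations together with the $\lambda$-ring structure on $K_0$, so the only genuine inputs needed about $\cP(G,X)$ are: it is an exact, idempotent complete category; $K_0(\cP(G,X))$ with $\lambda^k$ is a (special) $\lambda$-ring, which is \cite[Theorem~(2.6) and its proof, or the corresponding statements]{GRR}; and the structural short exact sequences (E1)--(E5) hold, which they do since the exterior powers are the usual ones with diagonal action. Once these are in place, the inductive computation of \cite[Section~8]{HKT17} yields \eqref{eq: third lambda ring axiom} for $K_r^{\mathrm{Gr}}(\cP(G,X))$, and pulling back along the isomorphism of \cref{Thm: main thm} completes the proof.
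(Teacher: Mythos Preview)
Your overall strategy matches the paper's: apply \cref{Thm: main thm} to pass to the combinatorial operations on $K_r^{\mathrm{Gr}}(\cP(G,X))$, and then adapt \cite[Section~8]{HKT17}. Your verification of the hypotheses for \cref{Thm: main thm} (idempotent completeness, and the quasi-isomorphism condition via the forgetful functor to $\cP(X)$ and \cref{prop: exterior powers of quasi-isom}) is also what the paper does.

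Where you go wrong is in your account of what makes \cite[Section~8]{HKT17} work in the equivariant setting. That argument does \emph{not} use that $K_0(\cP(G,X))$ is a special $\lambda$-ring; the ingredient you single out plays no role. The actual chain of reductions is: $K_r^{\mathrm{Gr}}(\cP(G,X))$ is a quotient of $K_0\bigl((B^q)^r\cP(G,X)\bigr)$; plugging in an object $P$ transfers the identity to $K_0\bigl(\mathrm{End}((B^q)^r\cP(G,X))\bigr)$; and the crux is to construct an exact functor
\[
\mathrm{Pol}^0_{<\infty}(\ZZ)\;\longrightarrow\;\mathrm{End}\bigl((B^q)^r\cP(G,X)\bigr)
\]
respecting tensor products and exterior powers, which reduces everything to the universal $\lambda$-ring $K_0\bigl(\mathrm{Pol}^0_{<\infty}(\ZZ)\bigr)\cong$ augmentation ideal of $\ZZ[s,\lambda^2(s),\ldots]$. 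In the non-equivariant case this functor is simply ``apply $F$ objectwise'', but for $G$-modules one must first explain why $F(V)$ is again a $G$-module. This is precisely the step the paper isolates as requiring additional reasoning in the equivariant setup: one uses the base-change compatibility $F(f^*V)\cong f^*F(V)$ of polynomial functors to equip $F(V)$ with the $G$-structure
\[
m_X^*F(V)\cong F(m_X^*V)\xrightarrow{\,F(m_V)\,}F(\mathrm{pr}_X^*V)\cong \mathrm{pr}_X^*F(V),
\]
and checks associativity. Your proposal omits this construction entirely and substitutes an input that is neither used nor sufficient, so as written the reduction to \cite[Section~8]{HKT17} is a genuine gap.
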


\begin{proof}
We put $K_0^\mathrm{Gr}(G,X) := K_0(G,X) = K_0(\cP(G,X))$ and inductively define
\[K_r^\mathrm{Gr}(G,X) := \mathrm{coker}\left(K_{r-1}^\mathrm{Gr} (C^q\cP(G,X)) \xrightarrow{\Delta} K_{r-1}^\mathrm{Gr} (B^q\cP(G,X))\right)\]
for $r \ge 1$. As explained prior to \cref{Thm: main thm}, we obtain an operation
\[\lambda^k \colon K_r^\mathrm{Gr}(G,X) \rightarrow K_r^\mathrm{Gr}(G,X)\]
for any $k \ge 1$ and $r \ge 0$. By \cref{Thm: main thm}, the isomorphism $K_r(G,X) \cong K_r^\mathrm{Gr}(G,X)$ from \cite[Corollary 7.4]{Grayson2012} commutes with $\lambda^k$. Indeed the assumptions at the beginning of \cref{sec: comparing lambda operations} are satisfied: the assumption in the first bullet point is evident and the assumption in the second bullet point follows from \cref{prop: exterior powers of quasi-isom} as this assumption refers to the underlying $\cO_X$-modules and does not involve the equivariant structures.

It therefore remains to prove \cref{thm: conjecture} when $K_r(G,X)$ is replaced with $K_r^\mathrm{Gr}(G,X)$. To this end, we follow the proof of \cite[Theorem~8.18]{HKT17}. In a nutshell, this means we continue reducing \cref{thm: conjecture} to an analogous statement by successively replacing $K_r(G,X)$ with some other $K$-group until we are arrive at one for which we can prove the required identity. The next paragraph will recap these reductions; for more details, the reader is referred to \emph{loc.\ cit.} The last paragraph will give a detailed proof of that reduction which needs additional reasoning in the equivariant setup of \cref{thm: conjecture}.

By definition, we have an epimorphism $K_0((B^q)^r\cP(G,X)) \rightarrow K_r^\mathrm{Gr}(G,X)$; thus, we may replace $K_r^\mathrm{Gr}(G,X)$ with $K_0((B^q)^r\cP(G,X))$, which actually yields a more precise statement because the multiplication in the latter $K$-group is no longer trivial. Then we reduce to the case when $x$ is the class of an object $P \in  (B^q)^r\cP(G,X)$. Plugging in $P$ shifts our statement to proving it when $(B^q)^r\cP(G,X)$ is replaced with $\mathrm{End}\left((B^q)^r\cP(G,X)\right)$, the category of endo-functors; here, the required structure, such as tensor products and exterior products on $\mathrm{End}\left((B^q)^r\cP(G,X)\right)$, is induced from the target $(B^q)^r\cP(G,X)$ in $\mathrm{End}\left((B^q)^r\cP(G,X)\right)$ in the usual way. Let $\mathrm{Pol}^0_{< \infty}(\ZZ)$ denote the exact category of polynomial functors $F$ that are a finite direct sums of homogeneous polynomial functors over $\ZZ$ and that satisfy $F(0)=0$, see \cite[Section~8A]{HKT17}. The crucial point now is that, as in the non-equivariant case, we have an exact functor
\begin{equation}\label{eq: functor from Pol}
\mathrm{Pol}^0_{<\infty}(\ZZ) \rightarrow \mathrm{End}\left((B^q)^r\cP(G,X)\right)
\end{equation}
which respects tensor products and exterior powers, see below. This  reduces our problem to the case when $\mathrm{End}\left((B^q)^r\cP(G,X)\right)$ is replaced with $\mathrm{Pol}^0_{< \infty}(\ZZ)$, in which case, by \cite[Theorem~8.5]{HKT17}, the Grothendieck group  $K_0\left(\mathrm{Pol}^0_{< \infty}(\ZZ)\right)$ is naturally isomorphic to the augmentation ideal in the universal $\lambda$-ring $\ZZ[s, \lambda^2(s), \ldots]$ in one variable $s$. Hence, our statement holds.

We still need to construct the functor \eqref{eq: functor from Pol}. To this end, recall from \cite[Section~8A]{HKT17} that a polynomial functor $F$ over a scheme $Y$ assigns an object~$F(V)\in \cP(Y)$ to any object~$V \in \cP(Y)$ and a homomorphism $F(\alpha)\colon F(V)_T \rightarrow F(W)_T$ to any homomorphism $\alpha \colon V_T \rightarrow W_T$ for any $V, W \in \cP(Y)$ and for any $Y$-scheme~$T$; the latter assignments must satisfy the usual functor axioms and be functorial in $T$, but need not be linear. Furthermore, for any morphism $Y' \rightarrow Y$ of schemes, there is a base change functor $\mathrm{Pol}_{< \infty}^0(Y) \rightarrow \mathrm{Pol}_{< \infty}^0(Y')$, see the construction in \emph{loc.\ cit.} The base change of any $F \in \mathrm{Pol}_{<\infty}^0(Y)$ will be denoted $F$ again and satisfies $F(f^*(V)) \cong f^*(F(V))$ by construction.  Now, given $F \in \mathrm{Pol}_{<\infty}^0(\ZZ)$ and $V \in \cP(G,X)$, we base-change $F$ from $\ZZ$ to $X$ and turn the $\cO_X$-module~$F(V)$ into a $G$-module via the isomorphism
\[\xymatrix{m^*_X F(V) \cong F(m^*_X V) \ar[r]^{F(m_V)} & F(p^*_X V) \cong p^*_X F(V)}.\]
This defines the functor \eqref{eq: functor from Pol} when $r=0$. The endo-functor of $\cP(G,X)$ just constructed sends $0$ to $0$ again and it commutes with restriction to any open subset $U$ of $X$ (after defining it for $X$ replaced with $U$ as well). As explained in \cite[Sections~4 and 6]{HKT17} (alternatively, in \cref{cor: Dold-Puppe operations IV} and \cref{prop: exterior powers of quasi-isom}), any such functor induces an endo-functor of $(B^q)^r\cP(G,X)$ for every $r \ge 0$. We have thus finally constructed the functor~\eqref{eq: functor from Pol}; it is straightforward to check that it satisfies all the required properties.
\end{proof}

\appendix

\section{\texorpdfstring{$K$}{K}-Theory of Non-negatively Supported Complexes}\label{Sec: non-negative complexes}
In this appendix we shall prove the relatively elementary fact that, in many situations, considering arbitrary bounded complexes instead of complexes supported in non-negative degrees doesn't result in any difference when passing to $K$-theory. An earlier statement of this kind is \cite[Proposition~1.4]{HKT17}.

\medskip

Let $\cM$ be an exact category and consider one of the categories $C\cM$, $C^\chi\cM$, $B\cM$, $B^b\cM$ as defined in the paper. We denote this category by $\cC_{\ge 0}$ in this appendix. The analogous category defined using complexes supported in degree $\geq -i$ (for any $i\in\mathbb{N}$) or using arbitrary bounded complexes is denoted by $\cC_{\geq -i}$ and $\cC$, respectively. We equip $\cC_{\ge -i}$ and $\cC$ with the subcategory~$q$ (or $b$ in case of $B\cM$) of weak equivalences as in the paper and denote the resulting
$K$-theory spaces (defined via Waldhausen's $S.$-construction as in \cite[Section~1.3]{Wald} or via the equivalent $G$-construction as explained in \cref{sec: operations on spaces}) by $K(q\cC_{\ge -i})$ and $K(q\cC)$, respectively.

\begin{prop}\label{prop: bounded vs nonnegative complexes}
The canonical map $K(q\cC_{\geq 0}) \rightarrow K(q\cC)$ is a homotopy equivalence.
\end{prop}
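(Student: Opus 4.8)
The plan is to prove this by a cofinality-type / shifting argument, reducing the general case $\cC_{\ge -i}$ to $\cC_{\ge -(i-1)}$ and iterating, and then handling $\cC$ (arbitrary bounded complexes) as the colimit. First I would recall the naive truncation: for a complex $V.\in\cC_{\ge -i}$, write $\sigma_{\ge -(i-1)}V.$ for the brutal truncation that replaces $V_{-i}$ by $0$ and $V_{-(i-1)}$ by $\mathrm{coker}(V_{-i}\to V_{-(i-1)})$ (using that $\cM$ is idempotent complete so this cokernel exists whenever $V_{-i}\rightarrowtail V_{-(i-1)}$, which holds for acyclic/quasi-isomorphic-to-something complexes in the relevant range), and $V_{-i}[-i]$ for the subcomplex concentrated in degree $-i$. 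This gives a short exact sequence of complexes $0\to V_{-i}[-i]\to V.\to\sigma_{\ge -(i-1)}V.\to 0$, functorial in $V.$, compatible with quasi-isomorphisms, and compatible with binary structures (for $B\cM$, $B^b\cM$) since the two differentials agree on $V_{-i}$ into $V_{-(i-1)}$ only up to the chain-complex condition — here one must be slightly careful and instead split off $V_{-i}$ using the fact that in an acyclic complex $V_{-i}\rightarrowtail V_{-(i-1)}$ is an admissible monomorphism for both differentials, so the construction is binary-functorial.

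Next I would invoke Waldhausen additivity (in the form used throughout the paper, e.g.\ \cite[1.6.2--1.6.3]{Wald} or via the $G$-construction) to the exact functor $V.\mapsto V.$ sitting in this short exact sequence of exact endofunctors of $\cC_{\ge -i}$, obtaining that the inclusion-induced map together with the "shift-down-and-re-embed" map realise $K(q\cC_{\ge -i})$ as an extension; more precisely, the relevant statement is that the subcategory of complexes supported in degrees $\ge -(i-1)$ is the fibre of the degree-$(-i)$ evaluation functor, and additivity gives $K(q\cC_{\ge -i})\simeq K(q\cC_{\ge -(i-1)})\times K(q\text{-complexes concentrated in degree }-i)$. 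But complexes concentrated in a single degree, with $q$-equivalences, contribute only a contractible space in the acyclic ($B^b$, $C^\chi$ restricted appropriately) case — or, in the $C\cM$ case, their contribution is exactly cancelled by re-embedding via a cone, using the standard exact sequence $0\to V[i]\to\mathrm{cone}(\mathrm{id})\to V[i+1]\to 0$ exactly as in the proof of \cref{Lem: GrothendieckGroupComplexes}, which shows the re-embedding map $\cM\to\cC_{\ge 0}$, $V\mapsto V[0]$ (up to shift) is cofinal / induces an equivalence on the relevant summand. Combining, the inclusion $K(q\cC_{\ge -(i-1)})\to K(q\cC_{\ge -i})$ is a homotopy equivalence for every $i\ge 1$.

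Finally I would pass to the colimit: $\cC=\mathrm{colim}_i\,\cC_{\ge -i}$ (every bounded complex lies in some $\cC_{\ge -i}$), and since $K$-theory (via the $S.$- or $G$-construction) commutes with filtered colimits of exact categories, $K(q\cC)=\mathrm{colim}_i K(q\cC_{\ge -i})$, which by the previous step is a colimit of homotopy equivalences, hence the structure map $K(q\cC_{\ge 0})\to K(q\cC)$ is a homotopy equivalence. The main obstacle I anticipate is the binary-complex bookkeeping: ensuring the truncation/splitting is functorial and exact simultaneously for both differentials in $B\cM$ and $B^b\cM$, and checking that the $q$- (resp.\ $b$-) weak equivalence structure is respected — this is where one has to use idempotent completeness of $\cM$ together with the fact that admissible monomorphisms in the bottom degree are forced by acyclicity, rather than any general splitting. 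Everything else is a routine application of additivity and commuting $K$-theory with filtered colimits.
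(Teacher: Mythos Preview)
Your overall architecture matches the paper's: write $\cC=\mathrm{colim}_i\,\cC_{\ge -i}$, use that $K$-theory commutes with filtered colimits, and then prove that each inclusion $\cC_{\ge -(i-1)}\hookrightarrow\cC_{\ge -i}$ induces a homotopy equivalence. The difference, and the gap, is in how you carry out this last step.

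Your truncation argument does not work uniformly across the four categories $C\cM$, $C^\chi\cM$, $B\cM$, $B^b\cM$. The brutal truncation $\sigma_{\ge -(i-1)}$ (simply deleting $V_{-i}$) does give an exact sequence $0\to (V_{-i}\text{ in degree }-i)\to V.\to \sigma_{\ge -(i-1)}V.\to 0$ of endofunctors of $C\cM_{\ge -i}$ and of $B\cM_{\ge -i}$, but it does \emph{not} land in $C^\chi\cM_{\ge -(i-1)}$ (the Euler characteristic changes by $\pm[V_{-i}]$) and it does \emph{not} land in $B^b\cM_{\ge -(i-1)}$ (deleting the bottom term destroys acyclicity of the bottom differential). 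Your attempted fix via a cokernel is really the smart truncation, but that is only defined when the bottom differential is an admissible epimorphism, and for binary complexes it depends on \emph{which} of the two differentials you use, so it is not a functor on $B^b\cM$. The ``cancellation via a cone'' you sketch for the residual $K(\cM)$-factor in the $C\cM$ case is also not a product decomposition coming from additivity in the way you state; additivity relates the three induced maps on $K$-theory, not the $K$-theory spaces themselves, and turning that into the equivalence you want still needs an argument.

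The paper sidesteps all of this by producing a homotopy inverse to the inclusion rather than a retraction. Concretely, it shows that the shift functor $[-1]\colon \cC_{\ge -(i+1)}\to \cC_{\ge -i}$ induces $-\id$ on $K$-theory, using the cone sequence $0\to V.\to\mathrm{cone}(\id_{V.})\to V.[-1]\to 0$ together with additivity and the contractibility of each $\mathrm{cone}(\id_{V_j[-j]})$. The point is that shifts and mapping cones \emph{do} preserve all four categories $C$, $C^\chi$, $B$, $B^b$ (cones are always acyclic, cones have Euler characteristic zero, and the cone of a binary map is again a binary complex), so one argument handles every case without any truncation.
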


\begin{proof}
We have inclusions $\cC_{\geq -i}\rightarrow\cC_{\geq -i-1}$ that can be bundled together in a filtered system indexed by $\mathbb{N}$ whose colimit is $\cC$ and that preserve the subcategories $q$. By its construction, the $K$-theory space functor commutes with filtered colimits. It therefore suffices to show that the induced map $K(q\cC_{\geq -i})\rightarrow K(q\cC_{\geq -i-1})$ is a homotopy equivalences for every $i \ge 0$. To this end, we consider the functor $[-1]$ that shifts any complex by $1$ to the left and multiplies its differential by $-1$. For every object $V. \in \cC_{\ge -i}$, the standard short exact sequence
\[0\rightarrow V. \rightarrow \mathrm{cone}(\id_{V.})\rightarrow V.[-1]\rightarrow 0\]
is then a sequence within $\cC_{\ge -i}$. (Note the cone construction also applies to binary complexes.) Hence, by the Additivity Theorem (see \cite[Theorem~1.4.2 and Proposition 1.3.2(4)]{Wald} or \cite[Corollary~1.7.3]{ThomTro}), we have $K([-1]) + K(\id) \simeq K(F)$ on $K(q\cC_{\ge -i})$  where $F$ is the endofunctor $V. \mapsto \mathrm{cone}(\mathrm{id}_{V.})$ on $\cC_{\ge -i}$. Filtering the double complex underlying $\mathrm{cone}(\id_{V.})$ vertically rather than horizontally, we similarly obtain that $K(F) \simeq \sum_{j\ge 0} K(F_j)$  where the functor $F_j$ is given by $V. \mapsto \mathrm{cone}(\id_{V_j[-j]})$ (see proof of \cite[Lemma~1.6]{HKT17} for details).  We furthermore have $K(F_j) \simeq 0$ as $\mathrm{cone}(\id_{V_j[-j]})$ is naturally homotopy equivalent to the zero complex. It remains to observe that composing the inclusion $\cC_{\geq -i}\rightarrow\cC_{\geq -i-1}$ in any direction with the shift functor $[-1] \colon \cC_{\geq -i-1} \rightarrow \cC_{\geq -i}$ is the endofunctor~$[-1]$ considered above.
\end{proof}

\section{Homotopy Fibres and Homotopy Fibre Sequences}\label{Sec: Homotopy}

In this appendix, we first give a precise definition of a (homotopy) fibre sequence and then detail a folklore fact about morphisms between fibre sequences that is crucial for the proof of \cref{Prop: identifying lambda operations} and that is in fact at the heart of the proof of our main theorem (\cref{Thm: main thm}). Standard references are \cite{Mather} and \cite{MuVoCu}.

\medskip

Given a topological space $Y$, let $Y^I$ denote the space of continuous maps from the unit interval $I$ to $Y$. Given a pointed space $(Y,y)$, we define the \emph{path space} $PY:=\lbrace \alpha\in Y^I\: |\: \alpha(0)=y\rbrace$. We also have a continuous map $\mathrm{ev}_1 \colon PY\rightarrow Y$ evaluating every path at $1$.

\begin{defn}[{\cite[2.2.3]{MuVoCu}}]
Given a continuous map $f:X\rightarrow Y$ and a point $y\in Y$, the \emph{homotopy fibre} of $f$ (at $y$) is the topological space
\[\mathrm{hofib}(f):=\lbrace (x,\alpha)\in X\times Y^I \mid f(x)= \alpha(1),\: \alpha(0)=y \rbrace.\]
\end{defn}

In other words, $\mathrm{hofib}(f)$ is the fibred product $X \times_Y PY$ of $X$ and $PY$ over $Y$ with respect to the maps $f \colon X \rightarrow Y$ and $\mathrm{ev}_1 \colon PY \rightarrow Y$.

Consider a sequence of continuous maps $W\xrightarrow{g}X\xrightarrow{f}Y$ and a point $y\in Y$. Let  $p \colon \mathrm{hofib}(f)\rightarrow X$ denote the map given by $(x,\alpha)\mapsto x$ and let $c_y \colon W \rightarrow Y$ denote the constant map with value $y$. It can be verified that there is a canonical bijection $H \mapsto \widetilde{H}$ between the set of homotopies $H \colon c_y \sim f\circ g$ and the set of continuous maps $G \colon W\rightarrow\mathrm{hofib}(f)$ such that $p\circ G =g$. The map $\widetilde{H}$ is called the \emph{whisker map}.

\begin{defn}\label{def: null-homotopy}
We say that a sequence $W\xrightarrow{g}X\xrightarrow{f} Y$ of topological spaces together with a distinguished point $y \in Y$ and a homotopy $H \colon c_y \sim f\circ g $ is a \emph{sequence with null-homotopy}. If the corresponding whisker map $\widetilde{H}$ is a homotopy equivalence, we say that it is a \emph{(homotopy) fibre sequence}.
\end{defn}

For example, given a continuous map $f \colon X \rightarrow Y$ and $y \in Y$, the sequence $\mathrm{hofib}(f)\xrightarrow{p}X\xrightarrow{f}Y$ is a fibre sequence. It is called the \emph{standard fibre sequence} associated with $f$ and $y$.

\begin{defn}
As usual, a \emph{morphism $\lambda: (X \xrightarrow{f} Y) \rightarrow (X' \xrightarrow{f'} Y')$ between continuous  maps $f$ and $f'$} is a pair $\lambda=(\lambda_X, \lambda_Y)$ of continuous maps $\lambda_X \colon X \rightarrow X'$ and $\lambda_Y \colon Y \rightarrow Y'$ such the obvious resulting square commutes. A \emph{morphism $\lambda$ between sequences $W \rightarrow X \rightarrow Y$ and $W' \rightarrow X' \rightarrow Y'$} of topological spaces is defined similarly. When these sequences are sequences with null-homotopies $H$ and $H'$ (or in fact fibre sequences), we say that $\lambda$ is a \emph{morphism of sequences with null-homotopies (or of fibre sequences)} if $\lambda_Y(y) = y'$ and the following diagram commutes:
\begin{equation*}\label{Dia3}
\xymatrix{W \ar[r]^H\ar[d]^{\lambda_W} & PY\ar[d]^{P\lambda_{Y}}\\
W' \ar[r]^{H'} & PY'
}
\end{equation*}
If this diagram commutes only up to homotopy, we say that $\lambda$ is a \emph{weak morphism of sequences with null-homotopies} (or \emph{of fibre sequences}).
\end{defn}

For example, if $\lambda = (\lambda_X, \lambda_Y) \colon f \rightarrow f'$ is a morphism between two continuous maps $f \colon X \rightarrow Y$ and $f' \colon X' \rightarrow Y'$, then, for every $y \in Y$, the triple $(\mathrm{hofib}(\lambda), \lambda_X, \lambda_Y)$ is a morphism between the standard fibre sequences associated with $(f,y)$ and $(f', \lambda_Y(y))$.

\begin{lem}\label{lem: Fiber compatibility}
Let $\lambda \colon (W \xrightarrow{g} X \xrightarrow{f} Y) \rightarrow (W' \xrightarrow{g'} X' \xrightarrow{f'} Y')$ be a morphism between sequences with null-homotopies $H$ and $H'$. Then the following diagram commutes:
\[\xymatrix{W \ar[r]^{\widetilde{H}\hspace*{1.5em}} \ar[d]^{\lambda_W} & \mathrm{hofib}(f) \ar[d]^{\mathrm{hofib}(\lambda)} \\
W' \ar[r]^{\widetilde{H}'\hspace*{1.5em}} & \mathrm{hofib}(f')
}\]
If $\lambda$ is only a weak morphism, then this diagram commutes up to homotopy.
\end{lem}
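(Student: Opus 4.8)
The plan is to make the three maps explicit --- the whisker maps $\widetilde H$, $\widetilde H'$, the induced map $\mathrm{hofib}(\lambda)$ --- and then read off both claims; the strict case becomes a one-line comparison of the two components of a map into a fibred product, and the weak case differs from it only by inserting the homotopy supplied by the hypothesis.

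Writing a null-homotopy $H\colon c_y\sim f\circ g$ also for its adjoint $W\to PY$, $w\mapsto(s\mapsto H(w,s))$, the whisker map is $\widetilde H\colon W\to\mathrm{hofib}(f)=X\times_Y PY$, $w\mapsto(g(w),H(w))$: its first component is $g$, and $H(w)$ runs from $y$ to $f(g(w))$, as required (see the discussion preceding \cref{def: null-homotopy}). The induced map on homotopy fibres is $\mathrm{hofib}(\lambda)\colon(x,\alpha)\mapsto(\lambda_X(x),\lambda_Y\circ\alpha)$, and analogously for $\widetilde H'$. Since a map into the fibred product $X'\times_{Y'}PY'$ is determined by its two components, for the strict case it suffices to compare the $X'$-components and the $PY'$-components of $\mathrm{hofib}(\lambda)\circ\widetilde H$ and of $\widetilde H'\circ\lambda_W$. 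On $X'$ these are $\lambda_X\circ g$ and $g'\circ\lambda_W$, which agree because $(\lambda_W,\lambda_X,\lambda_Y)$ is a morphism of sequences; on $PY'$ they are $(P\lambda_Y)\circ H$ and $H'\circ\lambda_W$, which agree because $\lambda$ is a morphism of sequences \emph{with null-homotopies}, i.e.\ the square in the relevant definition commutes. Hence the square of the lemma commutes on the nose.

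For the weak case $\lambda_X\circ g=g'\circ\lambda_W$ still holds, while the hypothesis now only provides a homotopy $K\colon W\times I\to PY'$ from $(P\lambda_Y)\circ H$ to $H'\circ\lambda_W$. I would then take $\Lambda\colon W\times I\to X'\times_{Y'}PY'$, $\Lambda(w,t):=(\lambda_X(g(w)),K(w,t))$: it is continuous, at $t=0$ it equals $\mathrm{hofib}(\lambda)(\widetilde H(w))$, and at $t=1$ it equals $(g'(\lambda_W(w)),H'(\lambda_W(w)))=\widetilde H'(\lambda_W(w))$ (using $\lambda_X\circ g=g'\circ\lambda_W$), so $\Lambda$ is the desired homotopy \emph{provided} it takes values in $\mathrm{hofib}(f')$ at every stage. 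That amounts to $\mathrm{ev}_1(K(w,t))=f'(g'(\lambda_W(w)))$ for all $t$, i.e.\ $K$ being a homotopy through null-homotopies of the fixed sequence $W\xrightarrow{g'\circ\lambda_W}X'\xrightarrow{f'}Y'$ --- a homotopy over $Y'$. This is the one point beyond unwinding definitions, and it is exactly what ``the square commutes up to homotopy'' is meant to encode here: put differently, the whisker correspondence identifies the null-homotopies of that fixed sequence with the maps $W\to\mathrm{hofib}(f')$ lying over $g'\circ\lambda_W$, and this identification is continuous, so it carries the path $K$ of null-homotopies to a homotopy of whisker maps. In the situation where \cref{lem: Fiber compatibility} is invoked (the proof of \cref{Prop: identifying lambda operations}, with null-homotopies coming from $0$ being an initial object), the relevant homotopies are built together with these compatibilities, so no further work is needed.
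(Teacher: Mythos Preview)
Your argument is correct and considerably more explicit than the paper's, which reads in full: ``The proof is a straightforward exercise in homotopy theory using the very definition of the standard homotopy fibre and the properties of homotopy pullbacks detailed in \cite{Mather}, for example.'' Your strict case is exactly the intended unwinding.

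For the weak case you put your finger on a genuine issue the paper does not address. With the literal reading of ``the square commutes up to homotopy'' as a free homotopy $K\colon W\times I\to PY'$, the hypothesis is vacuous, since $PY'$ is contractible; and indeed the conclusion then fails (take $W=X=W'=X'=\ast$, $Y=Y'=S^1$, $\lambda=\mathrm{id}$, with $H$ the constant loop and $H'$ a nontrivial loop: the two whisker maps land in different components of $\Omega S^1$). So, as you say, ``commutes up to homotopy'' must mean a homotopy \emph{over $Y'$}, i.e.\ through null-homotopies of the fixed composite $f'\circ g'\circ\lambda_W$. With that reading your map $\Lambda(w,t)=(\lambda_X g(w),K(w,t))$ lands in $\mathrm{hofib}(f')$ for all $t$ and gives the desired homotopy; your reformulation via the continuous whisker correspondence between null-homotopies and lifts to $\mathrm{hofib}(f')$ over a fixed map to $X'$ is the clean way to say it. Your closing remark that in the application (\cref{Prop: identifying lambda operations}) the null-homotopies come from $0$ being initial and are preserved on the nose is also correct, and in fact the morphisms arising there are strict, so only the first half of the lemma is really needed.

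In short: same approach as the paper but carried out rather than gestured at, and you have caught and resolved an ambiguity in the definition of ``weak morphism'' that the paper leaves implicit.
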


Thus, the assumption that $\lambda$ is not only a morphism between sequences but a weak morphism between, say, fibre sequences is enough to be able to conclude that $\lambda_W$ and  $\mathrm{hofib}(\lambda)$ induce the same maps on homotopy groups, after identifying the homotopy groups of $W$ and $W'$ with the homotopy groups of $\mathrm{hofib}(f)$ and $\mathrm{hofib}(f')$, respectively, via the whisker maps.

\begin{proof}
The proof is a straightforward exercise in homotopy theory using the very definition of the standard homotopy fibre and the properties of homotopy pullbacks detailed in \cite{Mather}, for example.
\end{proof}

\begin{cor}\label{Equivfib}\label{Cor: eq of sequences}
Let $\lambda=(\lambda_W, \lambda_X, \lambda_Y)$ be a weak morphism as in the previous lemma and assume that $\lambda_W$, $\lambda_X$ and $\lambda_Y$ are homotopy equivalences. If then one of the sequences is a fibre sequence, so is the other.
\end{cor}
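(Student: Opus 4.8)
The plan is to deduce the corollary from \cref{lem: Fiber compatibility} together with the standard gluing lemma for homotopy pullbacks. Write the two sequences as $W \xrightarrow{g} X \xrightarrow{f} Y$ and $W' \xrightarrow{g'} X' \xrightarrow{f'} Y'$ with null-homotopies $H$ and $H'$. By \cref{lem: Fiber compatibility}, since $\lambda$ is a weak morphism, the square
\[\xymatrix{
W \ar[r]^{\widetilde{H}\hspace*{1.5em}} \ar[d]^{\lambda_W} & \mathrm{hofib}(f) \ar[d]^{\mathrm{hofib}(\lambda)}\\
W' \ar[r]^{\widetilde{H}'\hspace*{1.5em}} & \mathrm{hofib}(f')
}\]
commutes up to homotopy. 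In this square $\lambda_W$ is a homotopy equivalence by hypothesis, and $W \xrightarrow{g} X \xrightarrow{f} Y$ (resp.\ $W' \xrightarrow{g'} X' \xrightarrow{f'} Y'$) is a fibre sequence precisely when the top (resp.\ bottom) horizontal map is a homotopy equivalence. Since in a homotopy-commutative square any three of whose edges are homotopy equivalences the fourth is one as well (if $v \circ a \simeq b \circ u$ with $a, u, v$ homotopy equivalences, then $b \simeq v \circ a \circ u^{-1}$ up to homotopy), it will suffice to show that $\mathrm{hofib}(\lambda)$ is a homotopy equivalence; the corollary then follows in both directions at once.

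To establish that $\mathrm{hofib}(\lambda)$ is a homotopy equivalence, I would recall that $\mathrm{hofib}(f)$ is the strict pullback of $X \xrightarrow{f} Y$ along $\mathrm{ev}_1 \colon PY \to Y$, and likewise for $f'$. Since $\lambda_Y(y) = y'$, the triple $(\lambda_X, \lambda_Y, P\lambda_Y)$ is a morphism of these cospans, and the map it induces on pullbacks is exactly $\mathrm{hofib}(\lambda)$, $(x,\alpha) \mapsto (\lambda_X(x), \lambda_Y \circ \alpha)$. Here $\lambda_X$ and $\lambda_Y$ are homotopy equivalences by hypothesis, and $P\lambda_Y$ is a homotopy equivalence because $\lambda_Y$ is (alternatively, both path spaces are contractible). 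As $\mathrm{ev}_1 \colon PY \to Y$ is a fibration, these strict pullbacks compute the corresponding homotopy pullbacks, so by the gluing lemma for homotopy pullbacks (see \cite{Mather}, or \cite{MuVoCu}) the induced map $\mathrm{hofib}(\lambda)$ is a homotopy equivalence, as needed.

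The only non-formal ingredient is this gluing lemma, which is entirely standard; everything else is a short diagram chase, so I do not expect a genuine obstacle. The one point to be careful about is basepoints: one must use that $\lambda_Y(y) = y'$ (part of the definition of a weak morphism of sequences with null-homotopies) so that $P\lambda_Y$ genuinely maps $PY$ into $PY'$ and the cospan morphism is well-defined; with that in hand the argument is purely formal.
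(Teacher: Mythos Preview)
Your proof is correct and follows essentially the same route as the paper's: the paper's proof simply cites \cref{lem: Fiber compatibility} together with \cite[Lemma~6]{Mather}, and Mather's Lemma~6 is precisely the gluing lemma for homotopy pullbacks that you invoke to show $\mathrm{hofib}(\lambda)$ is a homotopy equivalence. You have merely spelled out the two-out-of-three step and the cospan argument that the paper leaves implicit.
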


\begin{proof}
This follows immediately from \cref{lem: Fiber compatibility} and \cite[Lemma 6]{Mather}.
\end{proof}

\end{document}